\newcommand{\Cd}{\mathcal{D}}
\newcommand{\Ci}{\mathcal{I}}
\newcommand{\Co}{\mathcal{O}}
\newcommand{\Cs}{\mathcal{S}}
\newcommand{\Cx}{\mathcal{X}}
\newcommand{\Cl}{\mathcal{L}}
\newcommand{\Cb}{\mathcal{B}}
\newcommand{\Cv}{\mathcal{V}}
\newcommand{\Ct}{\mathcal{T}}
\newcommand{\Cp}{\mathcal{P}}
\newcommand{\M}{\mathbf{M}}
\newcommand{\Supp}{\mathrm{Supp}}
\newcommand{\vol}{\mathrm{vol}}
\theoremstyle{plain} 
\newtheorem{thm}{Theorem}[section] 
\newtheorem{lemma}[thm]{Lemma}
\newtheorem{prop}[thm]{Proposition}
\newtheorem{cor}[thm]{Corollary}
\theoremstyle{definition} 
\newtheorem{defn}[thm]{Definition} 
\newtheorem{conj}[thm]{Conjecture}
\theoremstyle{remark} 
\newtheorem{exa}[thm]{Example}
\begin{document}

	\title{On structures and discrepancies of klt Calabi--Yau pairs}
	
	\author{Junpeng Jiao}
	\email{jiao$\_$jp@tsinghua.edu.cn}
	\address{Mathematics Center, Tsinghua University, Beijing, China}
	
	\classification{14E05, 14E30}
	\keywords{Calabi--Yau variety, Discrepancy}

	\begin{abstract}
		We study the structures of klt Calabi--Yau pairs. We show that the discrepancies of log centers of all klt Calabi--Yau varieties with fixed dimension are in a finite set. As a corollary, we show that the index of 4-dimensional non-canonical Calabi--Yau variety is bounded.
	\end{abstract}
	
	\maketitle
	\tableofcontents
	Throughout this paper, we work over the complex number field $\mathbb{C}$.
	\section{Introduction}
	
	We say a pair $(X,B)$ is a Calabi--Yau pair if $(X,B)$ is a projective lc pair and $K_X+B\sim_{\mathbb{Q}} 0$, when $B=0$, we call $X$ a Calabi--Yau variety. 
	
	The Main result in this paper is the following.
	\begin{thm}\label{Main theorem 2}
		Fix $d\in \mathbb{N}$, then there exists $l\in \mathbb{N}$ depending only on $d$ such that:
		
		Let $X$ be a $d$-dimensional klt Calabi--Yau variety, $(Y,B_Y)\rightarrow X$ be a projective crepant birational morphism of $X$, then $lB_{Y, \geq 0}$ is an integral divisor, where $B_{Y, \geq 0}$ is the positive part of $B_Y$.  
	\end{thm}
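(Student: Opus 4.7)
The plan is to establish the stronger finiteness statement: the set
\[
\mathcal{S}_d := \{-a(E,X) \mid X \text{ is a $d$-dim.\ klt Calabi--Yau variety,\ } a(E,X)\in(-1,0]\}
\]
is a finite subset of $[0,1)$. Once this is proved, taking $l$ to be a common denominator of its elements suffices: for any crepant $(Y,B_Y)\to X$ with $X$ klt Calabi--Yau, every component of $B_{Y,\geq 0}$ is a $\pi$-exceptional prime $E$ whose coefficient $-a(E,X)$ lies in $\mathcal{S}_d$, so $l\cdot B_{Y,\geq 0}$ is integral.

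First, I would reduce to a single divisorial extraction. Given any $X$ and any divisorial valuation $E$ over $X$ with $a(E,X)\in(-1,0]$, apply an MMP/BCHM-type argument to produce a projective crepant birational morphism $\pi':Y'\to X$ such that $E$ appears as a $\pi'$-exceptional prime divisor on $Y'$. The pair $(Y',\,-a(E,X)\cdot E)$ is then itself a $d$-dimensional klt Calabi--Yau pair with a single non-zero boundary coefficient $-a(E,X)\in[0,1)$, so the question reduces to showing that this coefficient takes only finitely many values as $X$ and $E$ vary.

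Second, I would invoke a global ACC theorem for numerically trivial lc pairs (in the spirit of Hacon--McKernan--Xu): if $-a(E,X)$ is known to lie in some DCC set, then it in fact lies in a finite subset. I would try to establish the DCC by induction on $d$. Given a strictly decreasing sequence $c_i = -a(E_i,X_i)$, run an MMP on each extracted pair $(Y_i',c_iE_i)$. In the Mori fiber space case, apply the canonical bundle formula to descend to a generalized klt Calabi--Yau pair of smaller dimension whose boundary and moduli part encode $c_i$, then apply the inductive hypothesis. In the minimal-model case, invoke BAB-style boundedness for $\epsilon$-klt Calabi--Yau pairs with bounded coefficients to force the $c_i$ into a finite set.

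The main obstacle is this DCC/boundedness step. One must maintain uniform $\epsilon$-klt control on the extracted pairs $(Y_i',c_iE_i)$ in order to invoke boundedness, and one must handle the canonical bundle formula carefully (since the moduli part is only pseudo-effective a priori) for the induction to close. These are the standard technical hurdles in finiteness theorems for discrepancies, and I anticipate the bulk of the paper's technical work is devoted to overcoming them.
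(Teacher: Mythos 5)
Your reduction is sound: it suffices to prove that the set $\mathcal{S}_d$ of values $-a(E,X)\in[0,1)$ over $d$-dimensional klt Calabi--Yau varieties $X$ is finite, and the extraction of a single divisor via BCHM turns the question into a statement about klt Calabi--Yau pairs $(Y',cE)$ with a single boundary coefficient $c=-a(E,X)$. This matches in spirit the paper's opening move (pass to a terminalization $(Y,B_Y)\to X$). However, the core of the proof --- proving that $\mathcal{S}_d$ satisfies DCC, so that the HMX Global ACC can then be invoked to upgrade to finiteness --- is where your plan has serious gaps and diverges sharply from what the paper actually does.

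Concretely, your inductive step does not close. Running a $K_{Y'}$-MMP on $(Y',cE)$: since $K_{Y'}\sim_{\mathbb{Q}}-cE$ is anti-effective and $E$ is a nonzero prime divisor, $K_{Y'}$ is never pseudo-effective, so the MMP always terminates in a Mori fiber space $Y''\to Z$ and $E$ is never contracted; your ``minimal model case'' and its appeal to ``BAB-style boundedness for $\epsilon$-klt Calabi--Yau pairs'' does not arise, and in any case boundedness of Calabi--Yau pairs in that generality is not a theorem but a major open problem. In the Mori fiber space case you have two sub-cases. If $E$ is horizontal, restricting to the general fiber $F$ gives a klt Calabi--Yau pair $(F, cE|_F)$ on a Fano in lower dimension; to conclude that $c$ lies in a finite set you would need Global ACC in dimension $\dim F$, but that theorem requires the coefficients to lie in a DCC set, which is precisely what you are trying to establish --- this is circular. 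If $E$ is vertical, the canonical bundle formula gives a generalized lc Calabi--Yau pair $(Z,B_Z+\M_Z)$ rather than a Calabi--Yau variety, so the induction hypothesis (which concerns varieties, not generalized pairs) does not apply, and the coefficient $c$ is encoded through a supremum over multiplicities in a way that does not directly reproduce $c$ in the new boundary. The uniform $\epsilon$-klt control you flag as the obstacle is, for a fixed $X$, already available from HMX14 applied to the klt Calabi--Yau variety $X$ itself; the genuine obstruction is the circularity above.

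The paper's actual route avoids this circularity by establishing a structure theorem first (Theorem~\ref{Main theorem 1}): up to a flop, a klt Calabi--Yau pair $(Y,B_Y)$ fibers over a Calabi--Yau base $Z$ by a tower of Fano contractions with no very exceptional divisors, and after a finite \'etale-in-codimension-one base change $W\to Z$ this fibration becomes generically a product $X_g\times W$. One then shows (Theorem~\ref{Kodaira dimensions are the same globally and fiberwisely}) that the numerical dimension of $B_Y$ is $0$ both globally and fiberwise, runs an MMP to contract the exceptional part, and lands on a model whose general fiber is a rationally connected $\epsilon$-lc Calabi--Yau variety. Boundedness in codimension $1$ of such fibers (Birkar, \cite[Thm.~1.6]{Bir23}) --- a theorem, unlike boundedness of general Calabi--Yau pairs --- then bounds the Cartier index, and the fact that $B_Y$ is horizontal over $Z$ transfers the integral bound from the fiber to $B_Y$ itself. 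This structural step (the isotrivial product structure after a finite \'etale cover) is the key new input; it is not recoverable from an MMP-plus-CBF induction alone.
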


	For the definition of crepant birational morphisms, see Section 2.1.

	In this paper we also investigate the indices of Calabi--Yau pairs. The index of a Calabi--Yau pair $(X,B)$ is defined as the smallest integer $m\geq 1$ such that $m(K_X+B)\sim 0$. It is expected that, for a fixed dimension of $X$ and a fixed coefficient set of $B$, the indices of Calabi--Yau pairs $(X,B)$ are bounded.
		
	The following is the index conjecture for canonical Calabi--Yau variety.
	
	\begin{conj}\label{index conjecture}
		Let $X$ be a canonical Calabi--Yau variety, then there exists $l\in \mathbb{N}$ depending only on $\mathrm{dim}(X)$ such that $lK_X\sim 0$.
	\end{conj}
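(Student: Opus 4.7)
The plan is to reduce Conjecture~\ref{index conjecture} to lower-dimensional cases via the canonical bundle formula, using Theorem~\ref{Main theorem 2} to control the discriminant part that appears on the base of a fibration. First I would pass to a $\mathbb{Q}$-factorial terminalization $f\colon \tilde{X}\to X$; because $X$ is canonical, every exceptional divisor of $f$ has discrepancy $\geq 0$, so $f$ is crepant and $\tilde{X}$ is a terminal Calabi--Yau variety with $f_{*}\mathcal{O}_{\tilde{X}}=\mathcal{O}_{X}$. Consequently, bounding the index of $K_{\tilde{X}}$ bounds the index of $K_X$, and we may replace $X$ by $\tilde{X}$ and assume $X$ is $\mathbb{Q}$-factorial terminal.

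Next I would produce a nontrivial crepant fibration $\pi\colon X'\to Z$ with $\dim Z<\dim X$, obtained either from a Bogomolov-type decomposition theorem for terminal Calabi--Yau varieties or by running a perturbed MMP exploiting a non-torsion class in $\mathrm{Pic}(X)\otimes\mathbb{Q}$; in the remaining case, where $X$ admits no such fibration and $K_X$ is already $\mathbb{Q}$-torsion, one reduces to bounding the order of torsion directly via effective non-vanishing. After further birational modification, the general fiber of $\pi$ may be arranged to be Fano with Cartier index bounded by BAB, and the Fujino--Ambro canonical bundle formula yields a generalised klt Calabi--Yau pair $(Z, B_Z+M_Z)$ with $K_{X'}\sim_{\mathbb{Q}}\pi^{*}(K_Z+B_Z+M_Z)$.

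The central input from this paper would be a generalised-pair extension of Theorem~\ref{Main theorem 2}: on a suitable crepant model of $(Z,B_Z+M_Z)$, the positive part of the discriminant $B_Z$ has denominators dividing a dimension-only constant $l$. Combining this with an inductive hypothesis for the index conjecture on generalised Calabi--Yau pairs of dimension $<d$, together with BAB-type control of the relative Cartier index along a general Fano fiber, would bound the index of $K_{X'}$ and hence of $K_X$.

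The principal obstacle is the moduli divisor $M_Z$, whose Cartier index encodes the monodromy of the variation of Hodge structure along $\pi$; effective bounds on its denominators are only known in restricted settings and remain an active research direction. A secondary technical difficulty is upgrading Theorem~\ref{Main theorem 2} itself from klt pairs to generalised klt pairs, a refinement the inductive step genuinely requires. I expect these two issues, rather than the MMP reductions, to be where the real work lies.
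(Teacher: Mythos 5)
The statement you are trying to prove is a \emph{conjecture}: the paper labels it as such, surveys the low-dimensional cases where it is known (surfaces by Prokhorov--Shokurov, klt threefolds by Jiang, certain fourfold cases by Jiang--Liu, Xu, and Masamura), and uses it only as a \emph{hypothesis} in Corollary~\ref{Main corollary 1}. There is no proof of it in the paper, and the paper's contribution is a reduction of the non-canonical klt case to the canonical case one dimension down, not a resolution of the canonical case itself. So there is no ``paper's own proof'' to compare against here.

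That said, your sketch has genuine gaps, and you have in fact correctly diagnosed them rather than closed them. First, after terminalization it is not true that a terminal Calabi--Yau variety must admit a nontrivial crepant fibration $\pi\colon X'\to Z$ with $\dim Z<\dim X$: strict (primitive) Calabi--Yau varieties admit no such reduction, and for these your fallback — ``bounding the order of torsion directly via effective non-vanishing'' — is a restatement of the conjecture, not a method. Second, even when a fibration exists, bounding the Cartier index of the moduli part $\mathbf{M}_Z$ is the effective b-semi-ampleness (effective adjunction) problem, for which no dimension-only bound is known in general; this is precisely the obstruction you name, and it is a well-known open problem rather than a technical lemma one can borrow. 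Third, Theorem~\ref{Main theorem 2} concerns boundary coefficients on crepant models of klt Calabi--Yau \emph{varieties}; the generalised-pair extension your induction needs is not proved in the paper and would itself be a substantial new result. Each of these three steps — existence of the fibration, control of $\mathbf{M}_Z$, and the generalised-pair version of Theorem~\ref{Main theorem 2} — is open, so the proposal is a reasonable research plan but not a proof, and it should not be presented as one.
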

	\begin{cor}\label{Main corollary 1}
		Assume Conjecture \ref{index conjecture} in dimension $d-1$,
		then there exists $l\in \mathbb{N}$ depending only on $d$ satisfying the following:
		
		If $X$ is a $d$-dimensional non-canonical klt Calabi--Yau variety, then $lK_X\sim 0$.
	\end{cor}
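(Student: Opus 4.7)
The plan is to use the non-canonicity of $X$ to produce a crepant model with controlled positive boundary via Theorem~\ref{Main theorem 2}, and then reduce the index of $K_X$ to the index conjecture in dimension $d-1$ through a Mori fibration and the canonical bundle formula.

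First I would extract the non-canonical divisors. Since $X$ is non-canonical klt, take a $\mathbb{Q}$-factorial terminalization $\pi:Y\to X$ (available via BCHM) that extracts exactly the exceptional divisors $E_i$ with $a(E_i,X)<0$, so $K_Y+B_Y=\pi^{*}K_X$ with $B_Y=\sum b_i E_i$ and $b_i=-a(E_i,X)\in(0,1)$. Applying Theorem~\ref{Main theorem 2} to the crepant birational $(Y,B_Y)\to X$ yields $l_0=l_0(d)$ with $l_0 B_Y$ integral, so each $b_i$ lies in the finite set $\{1/l_0,\dots,(l_0-1)/l_0\}$. Because $\pi$ is birational, $m(K_Y+B_Y)\sim 0$ pushes forward to $mK_X\sim 0$ for every $m$ with $mB_Y$ integral, so it suffices to bound the index of $K_Y+B_Y$.

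Next I would run an MMP to produce a lower-dimensional fibration. Since $-K_Y\sim_{\mathbb{Q}} B_Y$ is effective and nonzero, $K_Y$ is not pseudo-effective; running a $(K_Y+(1-\epsilon)B_Y)$-MMP with scaling of an ample divisor, for $0<\epsilon\ll 1$, terminates with a Mori fiber space $\phi:Y'\to T$ of dimension $\dim T\leq d-1$. Along the MMP the relation $K+B\sim_{\mathbb{Q}} 0$ is preserved, so $(Y',B')$ is a klt Calabi--Yau pair whose coefficients remain in the finite set $\{k/l_0\}$, and the fibration is nontrivial provided one arranges that some component of $B_Y$ survives to be $\phi$-ample (otherwise $K_{Y'}\sim 0$ would contradict $\phi$ being a Mori fibration). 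Then I would apply the canonical bundle formula to $\phi:(Y',B')\to T$ to obtain a generalized klt Calabi--Yau pair $(T,B_T+M_T)$ of dimension $\leq d-1$. The general fiber of $\phi$ is a klt log Fano pair with boundary coefficients in the finite set $\{k/l_0\}$, so effective adjunction (Prokhorov--Shokurov, Filipazzi, Birkar) bounds the coefficients of $B_T$ and the Cartier index of $M_T$ in terms of $d$. By Conjecture~\ref{index conjecture} in dimension $d-1$, combined with the standard reduction from the index conjecture for canonical Calabi--Yau varieties to a bounded-index statement for generalized klt log Calabi--Yau pairs with bounded data in dimensions $\leq d-1$, the index of $(T,B_T+M_T)$ is bounded in terms of $d$. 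Pulling back by $\phi$ and then by $\pi$ bounds the index of $K_X$.

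The main obstacle is this final descent: bridging Conjecture~\ref{index conjecture} for ordinary Calabi--Yau \emph{varieties} in dimension $d-1$ to a bounded-index statement for the generalized klt Calabi--Yau \emph{pair} $(T,B_T+M_T)$ produced by the canonical bundle formula. This requires both effective adjunction to bound the coefficients of $B_T$ and the Cartier index of $M_T$, and a standard but delicate index reduction from Calabi--Yau varieties to generalized klt log Calabi--Yau pairs in dimensions $\leq d-1$. A secondary technicality is forcing the MMP to terminate with a genuine Mori fibration rather than collapsing to a $K$-trivial birational model of $X$; this is the reason for starting from the terminalization $Y$, where enough positive boundary is forced to survive thanks to Theorem~\ref{Main theorem 2}.
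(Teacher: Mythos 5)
Your approach diverges from the paper at the point where you build the lower-dimensional object and apply the hypothesis, and it has a genuine gap at that final step. The paper proves Corollary \ref{Main corollary 1} as the $B=0$, $r=1$ special case of Theorem \ref{Main corollary 1 with boundary}. The crucial input there is Theorem \ref{Main theorem 1}, which, after a flop, produces a contraction $f:X\to Z$ onto a Calabi--Yau \emph{variety} $Z$ (so $K_Z\sim_{\mathbb{Q}}0$, not merely a generalized log Calabi--Yau pair), with no very exceptional divisors, reduced fibers in codimension one, and a finite cover $\pi:W\to Z$, \'etale in codimension one and of degree bounded by the data, under which $X_W$ becomes isomorphic in codimension one to the product $X_g\times W$. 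The index of $K_X$ then follows by combining the bounded index of the general fiber (via boundedness of rationally connected $\epsilon$-lc Calabi--Yau pairs, Lemmas \ref{a general fiber has a very ample divisor implies bounded moduli space} and \ref{first step of relative MMP} and the proof of Theorem \ref{Main theorem 2 with boundary}), the bounded index of $K_Z$ (by Conjecture \ref{index conjecture} if $Z$ is canonical, by the inductive hypothesis if $Z$ is non-canonical klt), the bounded degree of $\pi$, and the isotrivial product structure, which lets these indices be globalized by pushing forward along $\pi_X$.

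Your argument instead stops after one Mori fibration $\phi:Y'\to T$, applies the canonical bundle formula, and then appeals to a ``standard reduction from the index conjecture for canonical Calabi--Yau varieties to a bounded-index statement for generalized klt log Calabi--Yau pairs with bounded data in dimensions $\leq d-1$.'' No such reduction is standard: it is essentially the content of Theorem \ref{Main corollary 1 with boundary}, hence of Corollary \ref{Main corollary 1} itself, and precisely what the paper's fibration machinery is built to supply. Conjecture \ref{index conjecture} concerns canonical Calabi--Yau \emph{varieties} only; bounding the index of a generalized klt log Calabi--Yau pair $(T,B_T+\M_T)$ with bounded coefficients and bounded moduli Cartier index is a genuinely stronger assertion, and deducing it from Conjecture \ref{index conjecture} requires exactly the kind of fibration-plus-isotriviality argument you are trying to bypass. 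Relatedly, your base $T$ need not satisfy $K_T\sim_{\mathbb{Q}}0$; the paper iterates the MMP on the base (Theorem \ref{construct the Fano contractions step by step}, Theorem \ref{Calabi--Yau pair has a tower of Fano fibration structure which is flat in codimension 1}, Proposition \ref{Calabi--Yau pair has a tower of Fano fibration structure which has no very exceptional divisor}) until it is an honest Calabi--Yau variety and, at the same time, arranges that every vertical divisor dominates a divisor on the base, which is what makes the isotrivial product model available. The opening moves of your proposal (terminalization, control of $B_Y$ via Theorem \ref{Main theorem 2}, running the MMP to reach a lower-dimensional base) match the paper's strategy, but without the additional structure on the fibration the final descent does not close.
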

	
	The index conjecture has been extensively studied in prior research, including works such as \cite{Kaw86}, \cite{Mor86}, \cite{PS09}, \cite{HMX14}, \cite{Jia21}, \cite{Xu19a}, \cite{Xu19b}, \cite{JL21}, and \cite{Mas24}. Specifically, it was proven in dimension 2 by Prokhorov and Shokurov \cite[Corollary 1.11]{PS09}, for terminal 3-folds by Kawamata \cite{Kaw86} and Morrison \cite{Mor86}, for klt 3-folds by Jiang \cite{Jia21}, for lc pairs of dimension 3 and non-klt (lc) pairs of dimension 4 by Jiang and Liu \cite{JL21}, for lc pairs of dimension 4 with non-zero boundary by Xu \cite{Xu19a}, \cite{Xu19b}, and for smooth Calabi--Yau varieties of dimension 4 by Masamura \cite{Mas24}.
	
	We have the following unconditional result for 4-folds."
	\begin{cor}\label{Main corollary 2}
		There exists $l\in\mathbb{N}$ such that if $X$ is a $4$-dimensional non-canonical Calabi--Yau variety, then $lK_X\sim 0$.
	\end{cor}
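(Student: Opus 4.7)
The plan is to combine Corollary \ref{Main corollary 1} applied in dimension $4$ with the known unconditional cases of the index conjecture in low dimension, splitting the argument on whether $X$ is klt.

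First I would dispose of the case in which $X$ is strictly lc, i.e.\ lc but not klt. Then $(X,0)$ is a non-klt lc Calabi--Yau pair of dimension $4$ with $K_X \sim_{\mathbb{Q}} 0$, and the result of Jiang--Liu \cite{JL21} for non-klt lc pairs of dimension $4$ yields a universal integer $l_1$ with $l_1 K_X \sim 0$.

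In the remaining case where $X$ is klt and non-canonical, I would simply invoke Corollary \ref{Main corollary 1} with $d=4$. The only hypothesis to verify is that Conjecture \ref{index conjecture} holds in dimension $3$, i.e.\ that canonical $3$-dimensional Calabi--Yau varieties have uniformly bounded index. But this is known in the much stronger form of the full klt case by Jiang \cite{Jia21}, which in particular covers canonical $3$-folds. Hence Corollary \ref{Main corollary 1} becomes unconditional in dimension $4$, producing a universal integer $l_2$ with $l_2 K_X \sim 0$ for every such $X$.

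Taking $l = \mathrm{lcm}(l_1, l_2)$ then works uniformly for every $4$-dimensional non-canonical Calabi--Yau variety. The genuine obstacle is buried in Corollary \ref{Main corollary 1} itself, whose proof rests on Theorem \ref{Main theorem 2}; for the present corollary the work is essentially a case split and a verification that existing $3$-dimensional results are strong enough to supply the input required by Conjecture \ref{index conjecture}.
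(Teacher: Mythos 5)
The paper does not include a separate proof of this corollary, and your reconstruction is correct and is the evident intent. Corollary~\ref{Main corollary 1} with $d=4$ handles the klt non-canonical case once one supplies Conjecture~\ref{index conjecture} in dimension $3$, which, as you observe, follows from Jiang's klt $3$-fold result (canonical implies klt); and the strictly lc case is handled by the cited Jiang--Liu result for non-klt lc pairs of dimension $4$. Your care in noting that Corollary~\ref{Main corollary 1} only covers the klt case is warranted, since under the paper's conventions a Calabi--Yau variety is merely assumed lc, so the non-klt case genuinely must be disposed of separately.
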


	We also prove a result on the existence of fibration structure of klt Calabi--Yau pairs.
	\begin{thm}\label{Main theorem 1}
		Fix $d\in \mathbb{N}$, $\epsilon\in \mathbb{Q}$, and a DCC set $\Ci\in (0,1)\cap \mathbb{Q}$, then there exist $m,v\in \mathbb{N}$ depending only on $d,\epsilon$ and $r\in \mathbb{N}$ depending only on $d,\epsilon,\Ci$ satisfying the following:
		
		Suppose $(Y,B_Y)$ is a $d$-dimensional $\epsilon$-lc Calabi--Yau pair with $B_Y\neq 0$, then there exist a flop $(X,B)\dashrightarrow (Y,B_Y)$, a contraction $f:X\rightarrow Z$, a divisor $A$ on $X$, and a finite cover $\pi:W\rightarrow Z$ such that
		\begin{itemize}
			\item $\pi$ is \'etale in codimension 1,
			\item $Z$ is a Calabi--Yau variety,
			\item $f$ has no very exceptional divisor,
			\item $f$ has reduced fibers over codimension 1 points of $Z$,
			\item $f$ factors as a sequence of Fano contractions with length $\leq m$,
			\item $A_g:=A|_{X_g}$ is very ample with $\mathrm{vol}(A_g)\leq v$, where $X_g$ is a general fiber of $f$, 
			\item $X_W$ is isomorphic in codimension 1 with $X_g\times W$, where $X_W$ is the normalization of the main component of $X\times _Z W$, and
			\item $(X_W,B_W)\rightarrow W$ is generically trivial, where $B_W$ is the $\mathbb{Q}$-divisor such that $K_{X_W}+B_W$ is equal to the pullback of $K_X+B$.
		\end{itemize}
		Furthermore, if $\mathrm{coeff}(B)\subset \Ci$, then we can choose $\pi$ such that
		\begin{itemize}
			\item $\mathrm{deg}(\pi)=r$.
		\end{itemize}
	\end{thm}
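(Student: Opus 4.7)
The plan is to construct $(X,B)$, $f:X\to Z$, and $\pi:W\to Z$ by combining an MMP that stays inside the Calabi--Yau category with the canonical bundle formula, boundedness of fibers, and effective adjunction.

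First I produce the flop $(X,B)\dashrightarrow(Y,B_Y)$ together with an initial Fano contraction. Since $B_Y\neq 0$ and $K_Y+B_Y\sim_{\mathbb{Q}}0$, the class $K_Y\sim_{\mathbb{Q}}-B_Y$ is antieffective and not pseudoeffective. After passing to a small $\mathbb{Q}$-factorialization of $Y$, I run a $(K_Y+(1-t)B_Y)$-MMP with scaling for a small rational $t>0$: every extremal ray it contracts or flips is simultaneously $(K_Y+(1-t)B_Y)$-negative and $(K_Y+B_Y)$-trivial, so each step is a flop of $(Y,B_Y)$, and the MMP terminates with a Mori fiber contraction $X\to Z_1$.

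Next I iterate to obtain the length-$m$ factorization $f:X\to Z_1\to\cdots\to Z_m=Z$. By the canonical bundle formula, $Z_1$ inherits a generalized Calabi--Yau structure $(Z_1,\Delta_1+M_1)$; if this is non-trivial, I repeat Step 1 on $Z_1$ and perform compatible flops on $X$ to preserve the contraction. The length $m=m(d,\epsilon)$ is bounded by the Picard rank of $\epsilon$-lc Calabi--Yau $d$-folds, and the iterated Mori-fiber structure forces $f$ to have no very exceptional divisor. The general fiber $X_g$ is an $\epsilon$-lc Calabi--Yau variety of dimension $\le d$, hence lies in a bounded family by Birkar's boundedness; this supplies $A$ on $X$ with $A|_{X_g}$ very ample and $\mathrm{vol}(A_g)\le v(d,\epsilon)$.

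The cover $\pi:W\to Z$ arises from moduli variation combined with semistable reduction. Boundedness of $X_g$ forces the variation of $f$ to be finite, so a Koll\'ar-type stable reduction produces $\pi:W\to Z$, \'etale in codimension one, after which $X_W\to W$ is isotrivial with reduced fibers in codimension one; this gives the isomorphism $X_W\cong X_g\times W$ in codimension one and the generic triviality of $(X_W,B_W)\to W$. Reduced fibers force the discriminant $\Delta_m$ on $Z$ to vanish, and $\pi$ is chosen to also trivialize the moduli $M_m$; hence $K_Z\sim_{\mathbb{Q}}0$ and $Z$ is a Calabi--Yau variety. When $\mathrm{coeff}(B)\subset\Ci$ is DCC, effective adjunction for generalized Calabi--Yau pairs with DCC boundary (Prokhorov--Shokurov, Birkar, Filipazzi) gives the uniform bound $r=r(d,\epsilon,\Ci)$ on $\deg\pi$.

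The main obstacle is coordinating all of the above uniformly in the input $(Y,B_Y)$: the initial MMP must consist entirely of flops (no divisorial contractions); the iterated tower must have length bounded without assuming boundedness of $X$ \emph{a priori}; and one single cover $\pi$ must simultaneously trivialize the moduli contributions of \emph{every} stage of the tower while remaining \'etale in codimension one and, in the DCC case, of prescribed degree $r$. The crucial input is an effective form of the Ambro/Kawamata canonical bundle formula for $\epsilon$-lc generalized Calabi--Yau fibrations with DCC boundary, which is also where the boundedness hypotheses of the theorem enter in an essential way.
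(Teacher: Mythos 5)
Your overall architecture (run an MMP to produce a Fano tower, use the canonical bundle formula to propagate the Calabi--Yau condition down the tower, use boundedness of fibers to produce $A$ and bound the length, then construct $\pi$ from the moduli of fibers) is in the same spirit as the paper's. But there are several genuine gaps, two of which are serious enough to stop the argument.

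\textbf{Divisorial contractions are not flops.} You claim that because every contracted ray of the $(K_Y+(1-t)B_Y)$-MMP is $(K_Y+B_Y)$-trivial, ``each step is a flop of $(Y,B_Y)$.'' That is false: a $(K_Y+B_Y)$-trivial divisorial contraction is crepant but is \emph{not} an isomorphism in codimension one, hence not a flop. The $-B_Y$-MMP you propose can and generally will contract divisors. The paper is forced to keep these divisorial Fano contractions as genuine steps in the tower $X_0\to X_1\to\cdots$ (Theorem 3.5), and only the \emph{flips} are absorbed into the flop $(X,B)\dashrightarrow(Y,B_Y)$ via the BDCS20 lifting of a tower through a small birational modification. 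So either you must accept that the first stage of your tower is longer than one Mori fiber contraction, or you have lost track of where the contracted divisors went.

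\textbf{``No very exceptional divisor'' is asserted, not proved.} A Mori fiber space (even a $\mathbb{Q}$-factorial, Picard-rank-one one) can easily have a fiber over a prime divisor $Q\subset Z$ that is reducible, or a vertical divisor whose image has codimension $\geq 2$; the iterated Mori-fiber structure does not by itself exclude very exceptional divisors. The paper devotes Lemma \ref{make morphisms flat in codimension 1}, Theorem \ref{construct the Fano contractions step by step}, Theorem \ref{Calabi--Yau pair has a tower of Fano fibration structure which is flat in codimension 1}, and Lemma \ref{every irreducible component of the boundary dominates the base} to forcing this. The key mechanism is: first arrange (via extraction on the base and a careful relative MMP) that every vertical prime divisor of $X$ dominates a divisor on $Z$; then use the vanishing of the discriminant and moduli parts to conclude, via the generically-trivial structure in Theorem \ref{generically trivial is trivial in codimension 1}, that over a prime divisor on $Z$ there is at most one vertical prime divisor. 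Without this argument your claim has no support.

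\textbf{The cover $\pi$, its \'etale-in-codimension-one property, and the degree bound.} Semistable/stable reduction ordinarily \emph{introduces} ramification, so it is not at all clear that a ``Koll\'ar-type stable reduction'' yields a $\pi$ that is \'etale in codimension one. The paper instead proceeds via Ambro's theorem (Lemma \ref{generically trivial after a finite base change}) to get \emph{some} finite cover trivializing the fibration, then promotes this to an \'etale Galois cover of a fixed moduli stack $\Cs$ (Theorem \ref{Universal family is potentially trivial after a finite base change}), pulls it back, and finally quotients by the inertia subgroups of prime divisors to kill ramification. Your DCC-degree bound is even more worrying: invoking ``effective adjunction'' for the moduli $\mathbf{b}$-divisor is not available in this generality; the paper sidesteps this by using \cite{HMX14} to bound the Cartier index of $B$, so that the moduli space $\Cs$ (and hence the degree of its \'etale cover $\bar\Cs\to\Cs$) is fixed.

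Finally, the argument ``reduced fibers force $\Delta_m=0$, and $\pi$ trivializes $M_m$, hence $K_Z\sim_{\mathbb{Q}}0$'' does not actually give $K_Z\sim_{\mathbb{Q}}0$ on $Z$ itself (trivializing the moduli after a base change has no effect on $K_Z$). The paper achieves $K_Z\sim_{\mathbb{Q}}0$ not by a single application of the canonical bundle formula but by continuing the inductive MMP on the base (Theorem \ref{Calabi--Yau pair has a tower of Fano fibration structure which is flat in codimension 1}) until the base is itself Calabi--Yau.
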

	
	\cite[Theorem 1.1]{MW23} establishes that for a projective klt pair $(X,B)$ with nef anticanonical divisor $-(K_X+B)$, there exists an \'etale in codimension 1 cover $X_W\rightarrow X$ whose MRC fibration $X\rightarrow W$ is locally trivial. 
	As an application of Theorem \ref{Main theorem 1}, we given a different proof of \cite[Theorem 1.1]{MW23} for the Calabi--Yau pair case.
	
	\begin{cor}\label{Main corollary, decomposition for CY}
		Suppose $(Y,B_Y)$ is a klt Calabi--Yau pair, then there exist a flop $(X,B)\dashrightarrow (Y,B_Y)$, a contraction $f:X\rightarrow Z$, and a finite cover $\pi:W\rightarrow Z$ such that
		\begin{itemize}
			\item $\pi$ is \'etale in codimension 1,
			\item $Z$ is a Calabi--Yau variety with canonical singularities,
			\item $f$ has no very exceptional divisor,
			\item $f$ has reduced fibers over codimension 1 points of $Z$, 
			\item a general fiber $X_g$ of $f$ is rationally connected, and
			\item $X_W$ is isomorphic in codimension 1 with $X_g\times W$, where $X_W$ is the normalization of the main component of $X\times _Z W$.
		\end{itemize}
	\end{cor}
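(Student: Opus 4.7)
The plan is to deduce this corollary from Theorem \ref{Main theorem 1}, with a preliminary reduction for the case $B_Y = 0$. When $B_Y \neq 0$, the pair is $\epsilon$-lc for some $\epsilon > 0$ (since klt), so Theorem \ref{Main theorem 1} applies and yields a flop $(X,B) \dashrightarrow (Y, B_Y)$, a contraction $f: X \to Z$ factoring as a tower of Fano contractions $X = X_0 \to X_1 \to \cdots \to X_m = Z$, and a finite cover $\pi: W \to Z$ \'etale in codimension 1 with the generic product structure $X_W \simeq X_g \times W$ in codimension 1. All the listed conclusions of the corollary are then immediate except two: the rational connectedness of $X_g$, and the canonical (rather than merely klt) singularities of $Z$.

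The rational connectedness of $X_g$ follows because each Fano contraction has Fano (hence rationally connected) general fibers, so by Graber--Harris--Starr the general fiber of the composite $f$ is rationally connected. For the canonical singularities of $Z$, I would first argue that $Z$ is not uniruled: the tower in Theorem \ref{Main theorem 1} can be arranged to be maximal, so that $Z$ admits no further Fano contraction, from which the non-uniruledness of the klt Calabi--Yau base $Z$ follows. Then I invoke the fact that a non-uniruled klt Calabi--Yau variety has canonical singularities --- if $Z$ had a non-canonical place with discrepancy $a \in (-1, 0)$, extracting it would yield a crepant model $Z'$ with $K_{Z'} \sim_{\mathbb{Q}} a E$ not pseudo-effective, forcing $Z'$ (and hence $Z$) uniruled by the klt version of Boucksom--Demailly--Paun--Peternell, contradicting the previous step.

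The case $B_Y = 0$ is reduced to the previous one. If $Y$ is already canonical then $Y$ is not uniruled (by the same argument applied directly to $Y$), and one takes $X = Y$, $Z = W = Y$ with all maps the identity, which trivially verifies all conclusions. Otherwise, one extracts a non-canonical place of $Y$ (after a small $\mathbb{Q}$-factorialization) to obtain a crepant model $(Y', B_{Y'})$ with $B_{Y'} \neq 0$, applies the first case to this model, and translates the output back through the crepant correspondence. The main obstacle is exactly this last step: ensuring the final birational map $(X, B) \dashrightarrow (Y, 0)$ remains an isomorphism in codimension 1 despite having passed through a divisorial extraction. This requires careful control of which divisors are contracted versus extracted, combined with the pseudo-effectivity arguments used to guarantee non-uniruledness throughout.
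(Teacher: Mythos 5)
Your proposal has a genuine gap at the crucial step of showing $Z$ has canonical singularities, and it is not repaired by the ``maximality'' argument you sketch. Theorem \ref{Main theorem 1} outputs a $Z$ with $K_Z\sim_{\mathbb{Q}}0$ and klt singularities; this alone already prevents any further Fano contraction from $Z$ (since $K_Z$ is nef), so your claim ``$Z$ admits no further Fano contraction $\Rightarrow$ $Z$ not uniruled'' gains nothing. A klt Calabi--Yau variety that is \emph{not} canonical is in fact uniruled (extract a non-canonical place $E$ to get $K_{Z'}\sim_{\mathbb{Q}}aE$ with $a<0$, not pseudoeffective, hence $Z'$ uniruled by BCHM), and conversely a canonical CY is not uniruled --- so canonicity of $Z$ is exactly equivalent to non-uniruledness, which is what you still need to prove, and having no Fano contraction does not give it to you. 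In short, Theorem \ref{Main theorem 1} stops too early: the $Z$ it produces can still be klt non-canonical (and uniruled), and nothing in your argument rules that out.

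The paper's proof resolves this by a double maneuver that is absent from your proposal. First, instead of applying Theorem \ref{Main theorem 1} to $(Y,B_Y)$ directly, one extracts \emph{all log places} to get $(Y',B'_Y+E'_Y)\rightarrow(Y,B_Y)$, applies Theorem \ref{Main theorem 1} to $(Y',B'_Y+E'_Y)$ to get $h:Y'\rightarrow Z_1$, and then runs an MMP using Theorem \ref{Kodaira dimensions are the same globally and fiberwisely} and \cite{Gon11} to contract exactly $E'_Y$, returning to a flop $(Y^m,B^m_Y)\dashrightarrow(Y,B_Y)$ over $Z_1$. This simultaneously handles both the case $B_Y=0$ with $(Y,0)$ non-canonical (your ``obstacle'' paragraph) and the general case, because the extracted divisors $E'_Y$ are automatically contracted again. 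Second, and essentially, the proof proceeds by \emph{induction on dimension}: $Z_1$ is a lower-dimensional klt Calabi--Yau pair, so the Corollary applies to it, giving a flop $Z'_1\dashrightarrow Z_1$ and a further contraction $g:Z'_1\rightarrow Z$ with $Z$ canonical. One then lifts $Y^m\rightarrow Z_1$ along the flop $Z'_1\dashrightarrow Z_1$ via \cite[Proposition 3.7]{BDCS20} and sets $f:=g\circ h$. The ``no very exceptional divisor'' and ``reduced fibers over codimension one points'' conclusions then need to be verified for the composite $f$ --- the paper checks that both properties compose --- a point your sketch also omits. Your identification of Graber--Harris--Starr for the rational connectedness of $X_g$ is correct and matches the paper, but the recursion on the base is the key idea you are missing.
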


	\noindent\textbf{Sketch of the proof}. The proof of Theorem \ref{Main theorem 1} employs a similar approach to that of \cite[Theorem 3.1]{BDCS20}, using Mori fiber spaces to construct a tower of Fano fibrations. However, a key distinction lies in ensuring that the fibration has no very exceptional divisor. We first show that such a fibration can be constructed such that every vertical divisor dominates a divisor on the base.
	
	After constructing the fibration $f:(X,B)\rightarrow Z$, we encounter two cases: either $Z$ reduces to a closed point, or $K_Z\sim_{\mathbb{Q}} 0$. In the former scenario, $X$ becomes rationally connected, allowing us to invoke \cite[Theorem 1.6]{Bir23}. If $Z$ is not a closed point, then $K_{X/Z}+B\sim_{\mathbb{Q}} 0$, indicating that the moduli $\mathbf{b}$-divisor of $f$ is $0$. Consequently, $X\rightarrow Z$ is generically isotrivial, implying that after a finite base change the generic fiber will be a trivial fibration. Since the boundary part of $f:(X,B)\rightarrow Z$ is also $0$, for any prime divisor $P$ on $Z$, every irreducible component of $f^*P$ is an lc place of $(X,B+f^*P)$ over the generic point of $P$. Also because $f$ is generically isotrivial, $(X,B+f^*P)$ is crepant birationally equivalent to $(F\times Z,B_F\times Z+F\times P)$ over an analytic neighborhood of the generic point of $P$, where $(F,B_F)$ is a general fiber of $f$ and is klt. Note there is only one lc place of $(F\times Z,B_F\times Z+F\times P)$ dominates $P$, then $f^*P$ has only one component and $X\rightarrow Z$ has no very exceptional divisor.
	
	Theorem \ref{Main theorem 2} stands as an application derived from Theorem \ref{Main theorem 1}. Assuming $(Y,B_Y)\rightarrow X$ is a terminalization of $X$, then by \cite{HMX14}, $(Y,B_Y)$ is $\epsilon$-lc. Let $f:Y\rightarrow Z$ be the generically isotrivial fibration defined in Theorem \ref{Main theorem 1} and $(F,B_F)$ a general fiber of $f$. We use the fact that $f$ is isotrivial and has no very exceptional divisor to prove that the numerical Iitaka dimension of $B_F$ is equal to the numerical Iitaka dimension of $B_Y$, which is 0. Consequently, $(Y,(1+\delta)B_Y)$ has a good minimal model, $Y\dashrightarrow Y^m$, over $Z$, with $B_Y$ contracted by $Y\dashrightarrow Y^m$, and $K_{Y^m}\sim_{\mathbb{Q}} 0$. Because a general fiber $F^m$ of $Y^m\rightarrow Z$ is rationally connected and $\epsilon$-lc, it is bounded in codimension 1 by \cite[Theorem 1.6]{Bir23}. We use the boundedness to show that the Cartier index of $F^m$ is bounded, then the Cartier index of $(F,B_F)$ is bounded and the coefficients of $B_Y$ are in a finite set.

	\section{Preliminaries}
	\subsection{Notations and basic definitions}  We will use the same notation as in \cite{KM98} and \cite{Laz04}.
	
	For a birational morphism $f: Y\rightarrow X$ and a $\mathbb{Q}$-divisor $B$ on $X$, $f_*^{-1}(B)$ denotes the strict transform of $B$ on $Y$, and $\mathrm{Exc}(f)$ denotes the sum of the reduced exceptional divisors of $f$. Given two $\mathbb{Q}$-divisors $A,B$, $A\sim_{\mathbb{Q}} B$ means that there is an integer $m>0$ such that $m(A-B)\sim 0$. For a $\mathbb{Q}$-divisor $D$, we write $D=D_{\geq 0}-D_{\leq 0}$ as the difference of its positive and negative parts. Let $D:=\sum a_i D_i$ and $D':=\sum a'_i D_i$ be two $\mathbb{Q}$-divisors, then $D\wedge D':= \sum \min \{a_i,a'_i\} D_i$.
	
	A sub-pair $(X,B)$ consists of a normal variety $X$ and a $\mathbb{Q}$-divisor $B$ on $X$ such that $K_X+B$ is $\mathbb{Q}$-Cartier. We call $(X,B)$ a pair if in addition $B$ is effective. If $g: Y\rightarrow X$ is a birational morphism and $E$ is a divisor on $Y$, the discrepancy $a(E,X,B)$ is $-\mathrm{coeff}_{E}(B_Y)$, where $K_Y+B_Y :=g^*(K_X+B) $. Given $\epsilon\in (0,1)$, a sub-pair $(X,B)$ is called sub-klt (sub $\epsilon$-lc, sub-lc, sub-terminal) if for every birational morphism $Y\rightarrow X$ as above, $a(E,X,B)>-1$ ($\geq -1+\epsilon$, $\geq -1$, $>0$) for every divisor $E$ on $Y$. A pair $(X,B)$ is called klt ($\epsilon$-lc, lc, terminal) if $(X,B)$ is sub-klt ($\epsilon$-lc, sub-lc, sub-terminal) and $(X,B)$ is a pair.
	
	Given a sub-pair $(X,B)$, we call a prime divisor $P$ over $X$ an lc place (log place, non-terminal place) of $(X,B)$ if its discrepancy $a(P,X,B)$ is $=-1$ ($\in [-1,0)$, $\in [-1, 0]$ and $P$ is exceptional over $X$). A closed subvariety of $X$ is called an lc center (log center, non-terminal center) of $(X,B)$ if it is the image of an lc place (log place, non-terminal place).
	
	Let $(X,B),(Y,B_Y)$ be two sub-pairs and $h:Y\rightarrow X$ a birational morphism, we say $(Y,B_Y)\rightarrow (X,B)$ is a crepant birational morphism if $K_Y+B_Y\sim_{\mathbb{Q}}h^*(K_X+B)$ and $h_*B_Y=B$. Two pairs $(X_i,B_i),i=1,2$ are crepant birationally equivalent if there is a sub-pair $(Y,B_Y)$ and two crepant birational morphisms $(Y,B_Y)\rightarrow (X_i,B_i),i=1,2$. A birational map $(X_1,B_i)\dashrightarrow (X_2,B_2)$ is called a flop if it induces crepant birationally equivalence and $X_1\dashrightarrow X_2$ is an isomorphism in codimension 1.
	Let $(X,B)$ be a klt pair, a projective crepant birational morphism $(Y,B_Y)\rightarrow (X,B)$ is called a terminalization of $(X,B)$ if $(Y,B_Y)$ is terminal.
	
	A generalized pair $(X,B+\M_X)$ consists of a normal variety $X$ equipped with a projective morphism $X\rightarrow U$, a birational morphism $f:X'\rightarrow X$, a $\mathbb{Q}$-boundary $B$, and a $\mathbb{Q}$-Cartier divisor $\M_{X'}$ on $X'$ such that $K_{X}+B+\M_X$ is $\mathbb{Q}$-Cartier, $\M_{X'}$ is nef over $U$, and $\M_X=f_*\M_{X'}$. Let $B'$ be the $\mathbb{Q}$-divisor such that $K_{X'}+B'+\M_{X'}=f^*(K_X+B+\M_X)$, we call $(X,B+\M_X)$ generalized klt ($\epsilon$-lc, lc), if $(X',B')$ is sub-klt (sub-$\epsilon$-lc, sub-lc). Let $P$ be a prime divisor $P$ over $X$, we define the generalized discrepancy by $a(P,X,B+\M_X):=a(P,X',B')$.
	When $U$ is a point we drop it by saying $X$ is projective.
	
	A contraction is a projective morphism $f:X\rightarrow Z$ with $f_*\mathcal{O}_{X}=\mathcal{O}_Z$, hence it is surjective with connected fibers. A contraction $f:X\rightarrow Z$ is called a Fano contraction if $-K_X$ is ample over $Z$. Suppose $f:X\rightarrow Z$ is birational morphism and a contraction, we say $f$ is divisorial if the exceptional locus of $f$ is a divisor, we say $f$ is small if its exceptional locus has codimension $\geq 2$. A fibration means a contraction $X\rightarrow Z$ such that $\mathrm{dim}(X)>\mathrm{dim}(Z)$.
	
	Let $X\rightarrow Z$ be a fibration and $R$ a $\mathbb{Q}$-divisor on $X$, we write $R=R_v+R_h$, where $R_v$ is the vertical part and $R_h$ is the horizontal part. Given a contraction $f:X\rightarrow Z$ between normal varieties, a prime divisor $P$ on $X$ is called very exceptional over $Z$ if $P$ is vertical over $Z$ and over the generic point of any prime divisor $Q$ on $Z$ we have $\Supp(f^*Q)\not \subset \Supp(P)$.
	
	Let $X$ be a variety, an open subset $U\subset X$ is called big if the codimension of $X\setminus U$ is $\geq 2$.
	
	For a scheme $X$, a stratification of $X$ is a disjoint union $\coprod_i X_i$ of finitely many locally closed subschemes $X_i\hookrightarrow X$ such that the morphism $\coprod_i X_i\rightarrow X$ is both a monomorphism and surjective.
	
	\begin{defn}\label{slc pair}
		A \textit{semi-pair} $(X,\Delta)$ consists of a reduced quasi-projective scheme of pure dimension and a $\mathbb{Q}$-divisor $\Delta\geq 0$ on $X$ satisfying the following conditions:
		\begin{itemize}
			\item $X$ is $S_2$ with nodal codimension one singularities,
			\item no component of $\Supp (\Delta)$ is contained in the singular locus of $X$, and
			\item $K_{X}+\Delta$ is $\mathbb{Q}$-Cartier.
		\end{itemize}
		We say that $(X,\Delta)$ is \textit{semi-log canonical (slc)} if in addition we have:
		\begin{itemize}
			\item if $\pi:X^\nu \rightarrow X$ is the normalization of $X$ and $\Delta^\nu$ is the sum of the birational transform of $\Delta$ and the conductor divisor of $\pi$, then every irreducible component of $(X^\nu,\Delta^\nu)$ is lc. 
			We call $(X^\nu,\Delta^\nu)$ the normalization of $(X,\Delta)$.
		\end{itemize}
	\end{defn}
	
	\subsection{Families of pairs}
	The definition of families of projective pairs comes from \cite[\S 4]{Kol23}, in this paper we mainly deal with the case when the base is smooth.
	\begin{defn}
		Let $S$ be a reduced scheme and $n$ a natural number. A family of projective pairs of dimension $n$ over $S$ is an object
		$$f:(X,B)\rightarrow S,$$
		consisting of a morphism of schemes $f:X\rightarrow S$ and an effective $\mathbb{Q}$-divisor $B$ satisfying the following properties:
		\begin{itemize}
			\item $f$ is projective, flat, of finite type, of pure relative dimension $n$, with geometrically reduced fibers,
			\item every irreducible component $D_i\subset \Supp(B)$ dominates an irreducible component of $S$ and all nonempty fibers of $\Supp(B)\rightarrow S$ have pure dimension $n-1$. In particular, $\Supp(B)$ does not contain any irreducible component of any fiber of $f$, and
			\item the morphism $f$ is smooth at generic points of $X_s\cap \mathrm{Supp}(D)$ for every $s\in S$.
		\end{itemize}
		
		We say a family of projective pairs $f:(X,B)\rightarrow S$ is \textit{well-defined} if further
		\begin{itemize}
			\item there exists an open subset $U\subset X$ such that
			\begin{itemize}
				\item codimension of $X_s\setminus U_s$ is $\geq 2$ for every $s\in S$, and
				\item $B|_U$ is $\mathbb{Q}$-Cartier.
			\end{itemize}
		\end{itemize}
		
		Let $f:(X,B)\rightarrow S$ be a well-defined family of projective pairs over a reduced scheme $S$, we say $f$ is \textit{locally stable} if it satisfies the following conditions:
		\begin{itemize}
			\item $K_{X/S}+B$ is $\mathbb{Q}$-Cartier, and
			\item $(X_s,B_s)$ is an slc pair for every $s\in S$.
		\end{itemize}
		We say $f$ is \textit{stable} if further
		\begin{itemize}
			\item $K_{X/S}+B$ is ample over $S$.
		\end{itemize}
	\end{defn}
	According to \cite[Theorem-Definition 4.3]{Bir23}, when $S$ is normal, a family of projective family of pairs is naturally well-defined. 
	\begin{lemma}[{\cite[Corollary 4.55]{Kol23}}]\label{Kol23, Corollary 4.55}
		Let $S$ be a smooth variety, $(X,B)$ a pair and $f:(X,B)\rightarrow S$ a morphism. Then $f:(X,B)\rightarrow S$ is locally stable if and only if $(X,B+f^*D)$ is lc for every snc divisor $D\subset S$.
	\end{lemma}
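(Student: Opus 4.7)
The plan is to prove both implications by iteratively invoking adjunction and inversion of adjunction along components of $D$, exploiting smoothness of $S$ to keep the Cartier and snc data under control.

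For the forward direction, assume $f$ is locally stable. Because $K_S+D$ is Cartier on the smooth base,
\[
K_X+B+f^*D=(K_{X/S}+B)+f^*(K_S+D)
\]
is $\mathbb{Q}$-Cartier. I would proceed by induction on the number of components of $D$ passing through a given point $x\in X$. The base case, when no component of $D$ meets $f(x)$, amounts to proving that a locally stable family over a smooth base has lc total space along its fibers; this is inversion of adjunction: slc-ness of $(X_{f(x)},B_{f(x)})$ together with $\mathbb{Q}$-Cartier-ness of $K_{X/S}+B$ forces $(X,B)$ to be lc near $X_{f(x)}$. For the inductive step, pick a component $D_i$ of $D$ through $f(x)$. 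The pullback $f^*D_i$ is Cartier, and the restricted morphism $f^{-1}(D_i)\to D_i$, equipped with the induced boundary, is itself locally stable over the smooth base $D_i$ (with one fewer component of the restricted snc divisor through each point). Inversion of adjunction along $f^*D_i$—combined with the inductive hypothesis applied to the restricted family—upgrades the slc conclusion on the fiber to the lc conclusion on $X$ near $x$.

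For the backward direction, first take $D=0$ to obtain that $(X,B)$ is lc. To recover slc-ness of the fiber $(X_s,B_s)$ at an arbitrary closed point $s\in S$, work in a neighborhood of $s$ and choose local coordinates $t_1,\ldots,t_n$ so that $D:=\sum\{t_i=0\}$ is snc; the hypothesis delivers lc-ness of $(X,B+f^*D)$. The fiber $X_s$ is the deepest stratum of the snc configuration of the $f^*\{t_i=0\}$ inside $X$, and iterated adjunction along these Cartier divisors computes the different on $X_s$ to be $B_s$ itself—no extra contribution arises because $S$ is smooth and the $\{t_i=0\}$ intersect transversally. The resulting slc-ness of $(X_s,B_s)$ gives the fiberwise part of local stability; the $\mathbb{Q}$-Cartier-ness of $K_{X/S}+B$ follows because $K_X+B+f^*D$ is $\mathbb{Q}$-Cartier and $f^*K_S$ is Cartier on the smooth base.

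The main technical obstacle is the non-normality of fibers, which takes us outside the usual framework of normal pairs and forces the use of Koll\'ar's theory of the different for slc pairs. The smoothness of $S$ is what keeps everything manageable: each divisorial stratum in $S$ is itself smooth, and the restricted family remains locally stable, allowing the induction on $\dim S$ to reduce the global statement to a single application of inversion of adjunction along a smooth divisor.
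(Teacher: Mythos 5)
The paper does not prove this lemma; it is imported directly as \cite[Corollary 4.55]{Kol23} with no proof supplied, so there is no internal argument to compare against. Your sketch follows the standard route (adjunction down to the fiber, inversion of adjunction back up, induction over the base), and the underlying plan is sound.

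One structural point is worth fixing before you write this out. You frame the forward direction as an induction on the number of components of $D$ through $f(x)$, with base case $D=0$ dispatched by a single appeal to inversion of adjunction: ``slc-ness of $(X_{f(x)},B_{f(x)})$ together with $\mathbb{Q}$-Cartier-ness of $K_{X/S}+B$ forces $(X,B)$ to be lc.'' But that base case already carries the full weight of the theorem. The usual inversion of adjunction is a codimension-one statement; to pass from the fiber $X_s$ all the way up to $X$ one must adjoin the preimages of $\dim S$ coordinate hyperplanes through $s$ and propagate inversion of adjunction one Cartier divisor at a time, which is exactly the induction you then run in the ``inductive step.'' The cleaner setup is therefore an honest induction on $\dim S$ (equivalently, on the codimension of the coordinate stratum of $S$ being peeled off), under which the two implications become near-mirror images: adjunction down in one direction, inversion of adjunction up in the other, and the ``base case'' is the fiber itself. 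You should also make explicit why the different along each $f^*D_i$ contributes nothing beyond the restriction of $B$: this uses that $f^*D_i$ is Cartier (being the pullback of a Cartier divisor on the smooth base) and that the family has geometrically reduced fibers so that the restriction is generically reduced. These are precisely the conditions baked into the definition of a (well-defined, locally stable) family, and your adjunction computation needs to visibly lean on them.
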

	\begin{defn}
		Fix $d,n\in\mathbb{N}$, $v\in \mathbb{Q}^{>0}$, and a vector $\alpha=(a_1,...,a_m)$ with positive rational coordinates. Given a reduced scheme $S$, a strongly embedded $(d,\alpha,v,\mathbb{P}^n)$-marked locally stable family
		$$f:(X\subset \mathbb{P}^n_S,B)\rightarrow S$$
		is a stable morphism $f:(X,B)\rightarrow S$ together with a closed embedding $g:X\hookrightarrow \mathbb{P}^n_S$ such that
		\begin{itemize}
			\item $B=\sum a_i D_i$, where $D_i$ are irreducible component of $\Supp(B)$,
			\item $f=\pi\circ g$ where $\pi$ denotes the projection $\mathbb{P}^n_S\rightarrow S$, 
			\item letting $\Cl=g^*\Co_{\mathbb{P}^n_S}(1)$, we have $R^qf_*\Cl \cong R^q\pi_*\Co_{\mathbb{P}^n_S}(1)$ for each $q$, and
			\item $\vol(K_{X_s}+B_s)=v$ for each $s\in S$.
		\end{itemize}
		
		Consider the moduli functor $\mathcal{E}^s\mathcal{MLSP}_{d,\alpha,v,\mathbb{P}^n}$ of strongly embedded $(d,\alpha,v,\mathbb{P}^n)$-marked locally stable family from the category of reduced schemes to the category of sets by setting
		$$\mathcal{E}^s\mathcal{MLSP}_{d,\alpha,v,\mathbb{P}^n}(S)=\{\text{strongly embedded }(d,\alpha,v,\mathbb{P}^n)\text{-locally stable families over }S\}.$$
	\end{defn}
	\begin{thm}\label{Kol23, 8.5}
		The functor $\mathcal{E}^s\mathcal{MLSP}_{d,\alpha,v,\mathbb{P}^n}$ is represented by a reduced separated scheme $\mathrm{E}^s\mathrm{MLSP}_{d,\alpha,v,\mathbb{P}^n}$.
		\begin{proof}
			This is Theorem 7.2 in the first arxiv version of \cite{Bir20}, see also \cite[8.5]{Kol23}.
		\end{proof}
	\end{thm}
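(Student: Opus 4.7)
The plan is to realize $\mathcal{E}^s\mathcal{MLSP}_{d,\alpha,v,\mathbb{P}^n}$ as a locally closed subscheme of a suitable product of Hilbert schemes and then read off representability and separatedness from the universal family. First, I would parametrize the embedded data by a Hilbert scheme: the ambient space is a finite disjoint union of products of the form $\mathrm{Hilb}_{P}(\mathbb{P}^n)\times \prod_{i=1}^m \mathrm{Hilb}_{Q_i}(\mathbb{P}^n)$, where $P$ ranges over Hilbert polynomials whose leading coefficient is compatible with $\dim X=d$ and $\vol(K_X+B)=v$, and the $Q_i$ range over Hilbert polynomials of divisorial components of $\Supp(B)$. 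Cutting down to the closed locus on which $D_i\subset X$ for every $i$ yields a Hilbert scheme $H_0$ carrying a universal flat family $X\to H_0$ together with universal subschemes $D_i\subset X$ and a tautological divisor $B=\sum a_i D_i$.

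Next, I would impose the geometric conditions one at a time. Flatness of $X\to H_0$ with geometrically reduced fibers of pure dimension $d$, the condition that each $D_i\to H_0$ has fibers of pure dimension $d-1$ dominating $H_0$, and smoothness of $X\to H_0$ at the generic points of each $X_s\cap D_i$ are all open conditions by standard semicontinuity. The well-definedness of the family (the existence of a big open subset $U$ on which $B|_U$ is $\mathbb{Q}$-Cartier together with the $\mathbb{Q}$-Cartierness of $K_{X/H_0}+B$) and the slc condition on every geometric fiber, which by Lemma \ref{Kol23, Corollary 4.55} can over smooth bases be checked by pulling back snc divisors, together carve out a locally closed subscheme; this is precisely Kollár's theorem that locally stable families form a locally closed sublocus of the Hilbert scheme and is the technical heart of \cite[\S 4]{Kol23}. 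Ampleness of $K_{X/S}+B$ over $S$ is a further open condition, the volume condition $\vol(K_{X_s}+B_s)=v$ is locally constant on the resulting scheme, and the cohomology comparison $R^q f_*\Cl\cong R^q\pi_*\Co_{\mathbb{P}^n_S}(1)$ is closed by cohomology and base change applied simultaneously to both sheaves.

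Separatedness follows from the valuative criterion applied to stable pairs: any family of stable pairs over the punctured spectrum of a DVR has a unique extension to the full spectrum, by the uniqueness of the canonical model of the generic fiber. Reducedness is built in by endowing the resulting locally closed subscheme with its reduced induced structure, which is consistent with the functor being defined on the category of reduced schemes.

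The main obstacle is the local closedness of the slc condition together with the existence of a universal $\mathbb{Q}$-Cartier deformation of $K_{X/S}+B$ in families; these depend on Kollár's theory of K-flatness of divisors in families and require carefully tracking how divisorial parts of $B$ can deform without breaking $\mathbb{Q}$-Cartierness, especially over non-normal bases. Once these ingredients are granted, the representability statement is formal, which is precisely what is carried out in \cite[8.5]{Kol23} and in the first arXiv version of \cite{Bir20}.
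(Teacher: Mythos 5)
The paper's proof is purely a citation to \cite{Bir20} and \cite[8.5]{Kol23}; your proposal fills in the argument that those sources actually carry out, and the overall structure is correct: parametrize embedded pairs by Hilbert schemes, cut down by open and locally closed conditions, appeal to Koll\'ar's theorem that local stability (including $\mathbb{Q}$-Cartierness of $K_{X/S}+B$) is a locally closed condition, and take the reduced induced structure since the functor lives on reduced schemes. You are also honest that the hard technical input is K-flatness and the local closedness of slc in families, which is exactly where the real work in \cite{Kol23} sits.

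Two small corrections. First, once you have realized $\mathrm{E}^s\mathrm{MLSP}_{d,\alpha,v,\mathbb{P}^n}$ as a locally closed subscheme of a disjoint union of Hilbert schemes, separatedness is automatic (subschemes of separated schemes are separated); the valuative criterion argument via uniqueness of stable limits is correct but superfluous for this statement, and in any case that argument proves properness of the moduli functor rather than just separatedness of the representing scheme. Second, the condition $R^q f_*\Cl\cong R^q\pi_*\Co_{\mathbb{P}^n_S}(1)$ is an \emph{open} condition rather than a closed one: for $q\geq 1$ one needs $R^qf_*\Cl=0$ (open by upper semicontinuity), and for $q=0$ the natural surjection $\pi_*\Co_{\mathbb{P}^n_S}(1)\to f_*\Cl$ is an isomorphism exactly when $h^0(X_s,\Cl_s)\leq n+1$, again open. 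Neither slip affects the viability of the approach, which matches the cited references.
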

	\subsection{Boundedness of pairs}
	\begin{defn}
		Fix $d\in\mathbb{N}$. Let $\mathscr{S}$ be a set of $d$-dimensional pairs, we say $\mathscr{S}$ is bounded if there exists $v>0$ such that for any $(X,B)\in \mathscr{S}$, there exists a very ample divisor $A$ on $X$ such that $A^d\leq v$. We say $\mathscr{S}$ is log bounded if there exist $v,v'>0$ such that for any $(X,B)\in \mathscr{S}$, there exists a very ample divisor $A$ on $X$ such that $A^d\leq v$ and $\Supp(B).A^{d-1}\leq v'$. 
	\end{defn}

	By boundedness of Chow variety, see \cite[\S 1.3]{Kol96}, a set of $d$-dimensional pairs $\mathscr{S}$ is bounded (log bounded) if and only if there exists a flat morphism $\Cx\rightarrow \Cs$ (a flat morphism $\Cx\rightarrow \Cs$ with a divisor $\Cb$ on $\Cx$ which is flat over $\Cs$) over a scheme of finite type, such that for every $(X,B)\in \mathscr{S}$, there exists a closed point $s\in \Cs$ such that $X\cong \Cx_s$ ($(X,\Supp(B))\cong (\Cx_s,\Cd_s)$).
	\begin{lemma}\label{a general fiber has a very ample divisor implies bounded moduli space}
		Fix $d,l\in \mathbb{N}$ and $v\in \mathbb{Q}^{>0}$. Then there is a family of projective pairs $(\Cx,\Cb)\rightarrow \Cs$ satisfying the following:
		
		Suppose $f:X\rightarrow Z$ is contraction between normal varieties with a general fiber $X_g$, $A$ is a divisor on $X$, and $B$ is a $\mathbb{Q}$-divisor on $X$ such that
		\begin{itemize}
			\item $\mathrm{dim}(X_g)=d$,
			\item $A_g$ is very ample, 
			\item $lB$ is integral, 
			\item $(X_g,B_g)$ is lc,
			\item $A_g^d\leq v$, and
			\item $B_g.A_g^{d-1}\leq v$.
		\end{itemize}
		Then there exist an open subset $U\hookrightarrow Z$ and a morphism $U\rightarrow \Cs$ such that
		$(X_U,B_U):=(X,B)\times_Z U\rightarrow U$ is isomorphic to the base change of $(\Cx,\Cb)\rightarrow \Cs$ by $U\rightarrow \Cs$.
		\begin{proof}
			Because $A_g$ is very ample and $\mathrm{vol}(A_g)=v$, $X_g$ is in a bounded family. Also because $lB$ is integral and $B_g.A_g^{-d}\leq v$, then $(X_g,B_g)$ is log bounded. By log boundedness, there exist $r\in \mathbb{N}$ and $u\in \mathbb{R}^{>0}$ depending only on $d,l,v$ such that $K_{X_g}+B_g+rA_g$ is ample, $\vol(K_{X_g}+B_g+rA_g)=u$, and $rA_g$ is very ample without higher cohomology. After replacing $Z$ by an open subset and $rA$ by a general member of $|rA|$, we may assume $(X,B+rA)$ is lc and $K_X+B+rA$ is ample over $Z$. We replace $v$ by $(1+r)v$ and $B$ by $B+rA$, then $K_X+B$ is ample over $Z$.
			
			Since $lB$ is integral, by log boundedness, there are only finitely many combinations of the coefficient sets of $B$. Then to prove the result, we may assume there exists a fixed vector $\alpha=(a_1,...,a_m)$ of rational numbers such that $B=\sum a_i D_i$, where $D_i\subset \Supp(B)$ are irreducible components.

			Let $U\subset Z$ be a smooth open subset such that $(X,B)\rightarrow Z$ has a fiberwise log resolution $g:Y\rightarrow X$ over $U$. Then $(Y,B_Y)\rightarrow Z$ is log smooth over $U$, where $B_Y$ is the $\mathbb{Q}$-divisor such that 
			$$K_Y+B_Y\sim_{\mathbb{Q}} g^*(K_X+B).$$
			Define $(Y_U,B_{Y_U}):=(Y,B_Y)\times_Z U,(X_U,B_U):=(X,B)\times_Z U$ and denote the natural morphism $Y\rightarrow Z$ by $f_Y$, by log smoothness, we have $(Y_U,B_{Y_U}+(f_Y|_{Y_U})^*D)$ is sub-lc for every snc divisor $D\subset U$, then $(X_U,B_U+(f|_U)^*D)$ is lc. By lemma \ref{Kol23, Corollary 4.55}, $(X_U,B_U)\rightarrow U$ is locally stable. Also because $K_X+B$ is ample over $U$, then $(X_U,B_U)\rightarrow U$ is a stable morphism.
			
			By log boundedness of $(X_g,B_g)$ and the fact that $(X_U,B_U)\rightarrow U$ is a stable morphism, there exists $m$ depending only on $d,l,v$ such that $m(K_{X_U}+B_U)$ is relatively very ample over $U$. After replacing $U$ by an open subsets, we may assume $(f|_U)_*\Co_{X_U}(m(K_{X_U}+B_U))$ is free and defines an embedding $X_U\hookrightarrow \mathbb{P}^n_U$, where $n\in \mathbb{N}$ depends only on $d,l,v$. Then $(X_U\hookrightarrow \mathbb{P}^n_U,B_U)\rightarrow U$ is a strongly embedded $(d,\alpha,u,\mathbb{P}^n)$-locally stable families over $S$.
			
			According to Theorem \ref{Kol23, 8.5}, the functor $\mathcal{E}^s\mathcal{MLSP}_{d,\alpha,u,\mathbb{P}^n}$ is represented by a reduced separated scheme $\mathrm{E}^s\mathrm{MLSP}_{d,\alpha,u,\mathbb{P}^n}$. We define $\Cs:=\mathrm{E}^s\mathrm{MLSP}_{d,\alpha,u,\mathbb{P}^n}$ and $(\Cx,\Cb)\rightarrow \Cs$ be the corresponding universal family, the result follows.
		\end{proof}
	\end{lemma}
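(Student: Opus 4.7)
The plan is to realize $(\mathcal{X},\mathcal{B}) \to \mathcal{S}$ as (a disjoint union of) the universal family over the moduli space $\mathrm{E}^s\mathrm{MLSP}_{d,\alpha,u,\mathbb{P}^n}$ provided by Theorem \ref{Kol23, 8.5}, and then exhibit the given $(X,B) \to Z$, after shrinking $Z$ to an open $U$ and modifying $B$ by a general multiple of $A$, as a strongly embedded marked locally stable family over $U$, which is then classified by a morphism $U \to \mathcal{S}$. The work therefore splits into (i) upgrading $(X,B) \to Z$ to a stable family with uniformly bounded numerical and combinatorial invariants, and (ii) invoking the representability statement.

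For step (i), I would first log bound the general fiber: very ampleness of $A_g$ together with $A_g^d \leq v$ puts $X_g$ in a bounded family, and $lB$ integral with $B_g \cdot A_g^{d-1} \leq v$ upgrades this to log boundedness of $(X_g,B_g)$. From log boundedness I expect an $r \in \mathbb{N}$, depending only on $d,l,v$, such that $K_{X_g}+B_g+rA_g$ is ample with bounded volume and $rA_g$ is very ample with vanishing higher cohomology. After shrinking $Z$ and replacing $rA$ by a general member of the relative linear system $|rA|$, I can arrange that $(X,B+rA)$ is lc and $K_X+B+rA$ is ample over $Z$; replacing $B$ by $B+rA$ (and enlarging $l$ if necessary) preserves integrality of $lB$. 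Log boundedness also pins the coefficients of $B$ down to a finite set, so I fix a coefficient vector $\alpha = (a_1,\dots,a_m)$ and work one such vector at a time.

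Next I restrict to an open $U \subset Z$ over which $(X,B) \to Z$ admits a fiberwise log resolution $g\colon Y \to X$; writing $K_Y+B_Y \sim_{\mathbb{Q}} g^*(K_X+B)$, log smoothness of $(Y_U,B_{Y_U}) \to U$ forces $(X_U,B_U+(f|_U)^*D)$ to be lc for every snc divisor $D \subset U$. By Lemma \ref{Kol23, Corollary 4.55} this makes $(X_U,B_U)\to U$ locally stable, and relative ampleness of $K_X+B$ upgrades it to a stable family. Log boundedness of the fiber then supplies $m,n \in \mathbb{N}$ (depending only on $d,l,v$) such that, after further shrinking $U$, the sheaf $(f|_U)_*\mathcal{O}_{X_U}(m(K_{X_U}+B_U))$ is locally free and defines an embedding $X_U \hookrightarrow \mathbb{P}^n_U$. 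This presents $(X_U \hookrightarrow \mathbb{P}^n_U,B_U)\to U$ as an object of $\mathcal{E}^s\mathcal{MLSP}_{d,\alpha,u,\mathbb{P}^n}$ with $u := \mathrm{vol}(K_{X_g}+B_g)$; setting $\mathcal{S} := \mathrm{E}^s\mathrm{MLSP}_{d,\alpha,u,\mathbb{P}^n}$ and taking the disjoint union over the finitely many possible $(\alpha,u)$, Theorem \ref{Kol23, 8.5} yields the desired classifying map $U \to \mathcal{S}$.

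The main obstacle is the passage from fiberwise boundedness to uniform relative positivity: one needs that $rA$ and $m(K_X+B)$ display their fiberwise features (very ampleness, vanishing of higher direct images, freeness of the relevant push-forward) in a relative way over an open $U \subset Z$, which is handled by semicontinuity and cohomology-and-base-change after shrinking. A secondary subtlety is ensuring that a general member of $|rA|$ keeps the total pair log canonical, which again reduces, via log boundedness, to a Bertini-type argument on the general fiber. Once these uniformity statements are in hand, the remainder is a formal assembly of fiber-boundedness into a moduli-theoretic classification.
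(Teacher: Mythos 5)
Your proposal follows essentially the same route as the paper's own proof: log bound the general fiber, adjust by $B \mapsto B + rA$ to get relative ampleness, fix a coefficient vector $\alpha$, use a fiberwise log resolution and Lemma~\ref{Kol23, Corollary 4.55} to establish local stability over an open $U$, embed into $\mathbb{P}^n_U$, and classify via Theorem~\ref{Kol23, 8.5}. Your remarks on semicontinuity/base-change and the disjoint union over finitely many $(\alpha,u)$ simply make explicit what the paper leaves implicit, so this is the same argument.
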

	\begin{lemma}\label{first step of relative MMP}
		Fix $d,r\in \mathbb{N}$. Let $\Cp$ be a log bounded set of $d$-dimensional projective pairs $(X,B+\Delta)$ such that $X$ is $\mathbb{Q}$-factorial, $(X,B)$ is klt, and $rB$ is integral. Then there exist a projective locally stable morphism $(\Cx,\Cb)\rightarrow \Cs$ over a scheme of finite type, a divisor $\Cd$ on $\Cx$ which is flat over $\Cs$, and a dense subset $\Cs'\subset \Cs$ such that 
		\begin{itemize}
			\item every irreducible component of $\Cs$ is smooth,
			\item $\Cx$ is $\mathbb{Q}$-factorial and klt, 
			\item for every $(X,B+\Delta)\in \Cp$, there exists a closed point $s\in \Cs'$ such that $(X,B+\mathrm{red}(\Delta))\cong (\Cx_s,\Cb_s+\Cd_s)$, and
			\item for every $s\in \Cs'$, there exists $(X,B+\Delta)\in \Cp$ such that $(X,B+\mathrm{red}(\Delta))\cong (\Cx_s,\Cb_s+\Cd_s)$.
		\end{itemize}
		\begin{proof}
			Fix $(X,B+\Delta)\in \Cp$.
			By the definition of log boundedness, there exist a contraction $V\rightarrow T$ over a scheme of finite type and reduced divisors $C,D$ on $V$ which are flat over $T$ and depend only on $\Cp$ such that $X$ is the fiber of $V\rightarrow T$ over $t\in T$, $\Supp(B)$ is contained in the fiber of $C\rightarrow T$ over $t$, and $\Supp(\Delta)$ is contained in the fiber of $D\rightarrow T$ over $t$. 
			
			To prove the result, we are free to pass to a stratification and consider each component of $T$, then we may assume $T$ is a smooth variety. 
			Since all pairs in $\Cp$ are normal, after replacing $V$ with its normalization and replacing $C,D$ with their inverse image with reduced structures, we can assume $V$ is normal.
			
			By shrinking $T$, we may assume $V\rightarrow T$ has fiberwise log resolution $\phi:W\rightarrow V$, let $\Sigma$ be the union of the birational transform of $C$ and the reduced exceptional divisor of $\phi$. After a finite base change and possibly shrinking $T$ we can assume that $T$ is smooth, $(W,\Sigma)$ is relatively log smooth over $T$ and $S\rightarrow T$ has irreducible fibers for each stratum $S$ of $(W,\Sigma)$.
			Because for every $(X,B+\Delta)\in \Cp$, $rB$ is integral, then $\mathrm{coeff}(B)$ is in a finite set $\{0,\frac{1}{r},...,\frac{r-1}{r}\}$. By considering different linear combination of irreducible components with coefficients in $\{0,\frac{1}{r},...,\frac{r-1}{r}\}$, we may assume there is a $\mathbb{Q}$-divisor $C^!\leq C$ such that $B=C^!_t$. Let $T'\subset T$ be the set of all closed points such that $t'\in T'$ if and only if there exists a pair in $\Cp$ corresponding to $t'$.
			
			Let $E$ be the reduced exceptional divisor of $W\rightarrow V$, then $E|_{W_t}$ is the reduced exceptional divisor of $W_t\rightarrow X$. 
			Since $(X,B)$ is klt, it is $\epsilon$-lc for some $\epsilon>0$. Let $B'$ be the birational transform of $B$ on $W_t$ plus $(1-\frac{\epsilon}{2})E|_{W_t}$, then $B'\leq \Sigma|_{W_t}$. Let $C_W$ be the strict transform of $C^!$ plus $(1-\frac{\epsilon}{2})E$, then we have $C_W|_{W_t}=B'$. Note $(W,C_W)$ is klt, the coefficients of $C_W$ is $<1$, and $(W,C_W)$ is log smooth over $T$.
			
			Running an MMP on $K_W+C_W$ over $V$ ends with a minimal model $(W',C')$, by the proof of \cite[Lemma 3.16]{Bir22}, $W'\rightarrow V$ is a small contraction. After shrinking $T$, we may assume $W'_s\rightarrow V_s$ is a small contraction for every closed point $s\in T'$. Because every closed point $t'\in T'$ corresponds to a projective pair in $\Cp$, and by assumption, $V_{t'}$ is $\mathbb{Q}$-factorial, then $W'_{t'}\rightarrow V_{t'}$ is an isomorphism. 
			
			Because $W'\rightarrow V$ is a small contraction, then $C'$ is the strict transform of $C^!$. Also because $(V_t,C'_t)\cong (X,B)$ and $W'_t\rightarrow V'_t$ is an isomorphism, then we have $(X,B)\cong (W'_t,C'_t)$. Let $D'$ be the strict transform of $D$ on $W'$, then we have $(X,B+\mathrm{red}(\Delta))\cong (W'_t,C'_t+D'_t)$.
			
			Note $(W,C_W)$ is log smooth over $T$, then $(W,C_W)\rightarrow T$ is a locally stable morphism. Since $W\dashrightarrow W'$ is a $K_W+C_W$-MMP over $V$, it is also a $K_W+C_W$-MMP over $T$. Also because $T$ is smooth, by \cite[Corollary 10]{KNX18}, $(W',C')\rightarrow T$ is a locally stable morphism.
			Since $W'$ is a minimal model, $W'$ is $\mathbb{Q}$-factorial.
			
			Note $T$ is replaced by an open subset, we repeat the argument on the complementary set and get a stratification of $T$. We define $\Cs$ to be the union of the locally closed subset of $T$ such that $T'$ is dense in $\Cs$, let $(\Cx,\Cb)\rightarrow \Cs$ be the locally stable morphism induced by $(W',C')\rightarrow T$ and $\Cd$ the divisor corresponding to $D'$. 
		\end{proof}
	\end{lemma}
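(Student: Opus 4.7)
The plan is to realize $(\Cx, \Cb) \to \Cs$ as the output of a relative MMP, run fiberwise on a log resolution of the parameter family supplied by log boundedness. Log boundedness gives a flat contraction $V \to T$ with reduced flat divisors $C, D$ on $V$ such that every $(X, B+\Delta) \in \Cp$ is a fiber $V_t$ with $\Supp B \subset C_t$ and $\Supp \Delta \subset D_t$. By Noetherian induction on $T$ and normalization (every $X$ being normal), I reduce to the case where $T$ is a smooth variety and $V$ is normal. After shrinking $T$, I take a fiberwise log resolution $\phi: W \to V$ so that $(W, \Sigma)$ is relatively log smooth over $T$, where $\Sigma$ is the strict transform of $C$ plus the reduced exceptional divisor; a further stratification ensures each stratum of $(W, \Sigma)$ has irreducible fibers over $T$.

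Next I set up the divisor on which the MMP will operate. Since $rB$ is integral, $\mathrm{coeff}(B) \subset \{0, 1/r, \dots, (r-1)/r\}$, a finite set; splitting into finitely many cases I fix a $\mathbb{Q}$-divisor $C^! \le C$ with $B = C^!_t$ on the relevant fibers. For any fixed $\epsilon \in (0,1)$, I set $C_W$ to be the strict transform of $C^!$ plus $(1-\epsilon/2)E$, where $E$ is the reduced exceptional divisor of $\phi$; then $(W, C_W)$ has coefficients strictly less than $1$, is log smooth over $T$, and in particular klt. Running a $(K_W + C_W)$-MMP over $V$ terminates in a log minimal model $(W', C') \to V$, which by the negativity-lemma argument (e.g.\ \cite[Lemma 3.16]{Bir22}) is a small birational contraction. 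After shrinking $T$, this smallness is fiberwise; at each $t \in T'$ corresponding to a pair $(X, B+\Delta) \in \Cp$, the $\mathbb{Q}$-factoriality of $V_t = X$ forces the small morphism $W'_t \to V_t$ to be an isomorphism. Since all exceptional divisors of $\phi$ are contracted by $W \dashrightarrow W'$, the divisor $C'$ on $W'$ is the strict transform of $C^!$, and therefore $C'_t = B$ and $(X, B) \cong (W'_t, C'_t)$.

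Because $T$ is smooth and the MMP is performed over $V/T$, the local-stability-under-MMP result \cite[Corollary 10]{KNX18} yields that $(W', C') \to T$ is locally stable, and $W'$ is $\mathbb{Q}$-factorial klt as the output of a klt MMP. Letting $D'$ be the strict transform of $D$ on $W'$, I obtain $(X, B+\mathrm{red}(\Delta)) \cong (W'_t, C'_t + D'_t)$ at every $t \in T'$. Iterating the construction on the complement of the open subset of $T$ where all of the above has been arranged produces a stratification of $T$; I then take $\Cs$ to be the disjoint union of the locally closed strata on which $T'$ is dense, and $(\Cx, \Cb, \Cd) \to \Cs$ to be the assembled family. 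The principal obstacle is arranging that the relative MMP produces not merely a small contraction $W'_t \to V_t$ but an honest isomorphism at every $t \in T'$, so that each pair in $\Cp$ is literally a fiber of $(\Cx, \Cb) \to \Cs$; this is precisely where the $\mathbb{Q}$-factoriality of $X$ enters, together with the openness of the locus over which $W'_s \to V_s$ remains small.
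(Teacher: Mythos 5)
Your proposal is essentially the paper's own proof, following the same steps: log boundedness gives $V\to T$ with $C,D$; normalize and pass to a fiberwise log resolution $\phi:W\to V$; form $C_W$ as the strict transform of $C^!$ plus $(1-\epsilon/2)E$; run a $(K_W+C_W)$-MMP over $V$ to reach a small contraction $W'\to V$; use $\mathbb{Q}$-factoriality of $V_t$ to make $W'_t\to V_t$ an isomorphism; invoke \cite[Corollary 10]{KNX18} for local stability; and stratify at the end.

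One small inaccuracy: you write that $\epsilon\in(0,1)$ can be chosen arbitrarily, but that is not quite right. For the $(K_W+C_W)$-MMP over $V$ to contract all of $E$ and terminate in a \emph{small} contraction, one needs $K_{W_t}+C_{W,t}-\phi_t^*(K_{V_t}+C^!_t)$ to be effective with support equal to $E|_{W_t}$. Its coefficient along an exceptional prime $E_i$ is $(1-\epsilon/2)+a(E_i,X,B)$, which is positive precisely when $a(E_i,X,B)>-1+\epsilon/2$. So $\epsilon$ must be chosen small enough that $(X,B)$ is $\epsilon$-lc (the paper states this explicitly), and one should further note that such an $\epsilon$ can be taken uniform across the family: once $(W,\Sigma)$ is log smooth over $T$ and $C^!$ is one of the finitely many fixed combinations, the discrepancies $a(E_i,V_t,C^!_t)$ are locally constant in $t$, hence take finitely many values $>-1$ over any given stratum. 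An arbitrary $\epsilon$ could make some coefficient negative and the MMP would then fail to produce a small map.
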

	
	\subsection{Canonical bundle formula}
	The following is a simplified version of canonical bundle formula given in \cite[8.5.1]{Kol07}.
	
	Let $(X,B)$ be a sub-lc pair where $B$ is not assumed effective. Let $f:X\rightarrow Z$ be a contraction to a normal variety $Z$ with geometrically connected generic fibers $X_\eta$, where $\eta$ is the generic point of $Z$. Assume that
	\begin{itemize}
		\item $(X_\eta,B_\eta)$ is a Calabi--Yau pair, and
		\item $K_X+B\sim_{\mathbb{Q}} f^*L$ for a $\mathbb{Q}$-Cartier $\mathbb{Q}$-divisor $L$ on $Z$. 
	\end{itemize}
	Let $Z^0\subset Z$ be the largest open set such that $f$ is flat over $Z^0$ with Calabi--Yau fibers and set $Y^0:=f^{-1}Z^0$. Then one can write
	$$K_X+B\sim_{\mathbb{Q}} f^*(K_Z+B_Z+\M_Z)$$
	where $\M_Z$ and $B_Z$ have the following properties:
	\begin{itemize}
		\item $(Z,B_Z+\M_Z)$ is a generalized lc pair,
		\item $\M$ is called the moduli part, which depends only on the generic fiber $(X_\eta,B_\eta)$,
		\item $B_Z$ is called the boundary part, which is supported on $Z\setminus Z^0$, and
		\item suppose $P\subset Z\setminus Z^0$ is a prime divisor, then
		$$\mathrm{coeff}_PB_Z=\sup_E \{1-\frac{1+a(E,X,B)}{\mathrm{mult}_Ef^*P}\}$$
		where the supremum is taken over all divisors over $X$ that dominate $P$.
	\end{itemize}
	Note there exists an open neighborhood $U$ of the generic point of $P$ such that 
	$$\mathrm{coeff}_PB_Z=1-\mathrm{lct}(f^*P_U,X_U,B_U),$$
	where $X_U:=X\times_Z U$, $B_U:=B|_{X_U}$, and $\mathrm{lct}(f^*P_U,X_U,B_U)$ is the largest number $t$ such that $(X_U,B_U+tf^*P_U)$ is lc.
	\begin{lemma}\label{lc place dominates lc place}
		Let $(X,B)$ be an lc pair and $f:X\rightarrow Z$ a contraction such that
		$$K_X+B\sim_{\mathbb{Q}} f^*(K_Z+B_Z+\M_Z).$$
		
		Suppose $P$ is a prime divisor over $X$ whose image on $X$ does not dominates $Z$. Let $Z'\rightarrow Z $ and $X'\rightarrow X$ be two birational morphisms such that we have the following diagram
		$$\xymatrix{
		P\ar@{^{(}->}[r] \ar[d] & X'\ar[d]^{f'} \ar[r]^g & X\ar[d]^f \\
		Q \ar@{^{(}->}[r]& Z'\ar[r]_h & Z,
		}$$
		where 
		\begin{itemize}
			\item $X'\rightarrow Z'$ is a contraction,
			\item $P$ is a prime divisor on $X'$,
			\item $Q$ is a prime divisor on $Z'$, and
			\item $P$ dominates $Q$.
		\end{itemize}
	
		Suppose $a(P,X,B)=-1$ ($\in(-1,0)$, $\in (-1,0]$), then $a(Q,Z,B_Z+\M_Z)=-1$ ($\in(-1,0)$, $\in (-1,0]$).
		\begin{proof}
			Suppose $K_{X'}+B'\sim_{\mathbb{Q}} g^*(K_X+B)$ and $K_{Z'}+B_{Z'}+\M_{Z'}\sim_{\mathbb{Q}} h^*(K_Z+B_Z+\M_Z)$, then $a(P,X,B)=a(P,X',B')$ and $a(Q,Z,B_Z+\M_Z)=-\mathrm{coeff}_QB_{Z'}$. By the construction of $B_{Z'}$, we have
			$$\mathrm{coeff}_QB_{Z'}=\sup_E \{1-\frac{1+a(E,X',B')}{\mathrm{mult}_E(f')^*Q}\}\geq 1-\frac{1+a(P,X',B')}{\mathrm{mult}_P(f')^*Q}.$$
			Because $P$ dominates $Q$ and $Z$ is smooth in codimension 1, then $\mathrm{mult}_P(f')^*Q$ is an integer. So if $a(P,X,B)=-1$ ($\in(-1,0)$, $\in (-1,0]$), then $a(Q,Z,B_Z+\M_Z)=1$ ($\in(-1,0)$, $\in (-1,0]$).
		\end{proof}
	\end{lemma}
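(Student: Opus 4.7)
The plan is to translate the claim into a bound on the coefficient of $Q$ in $B_{Z'}$, and then invoke the explicit supremum formula for the boundary part provided by the canonical bundle formula recalled above. First, I would pull back via $g$ and $h$ to get sub-pairs $(X',B')$ and $(Z',B_{Z'}+\M_{Z'})$ satisfying $K_{X'}+B'\sim_{\mathbb{Q}} g^*(K_X+B)$ and $K_{Z'}+B_{Z'}+\M_{Z'}\sim_{\mathbb{Q}} h^*(K_Z+B_Z+\M_Z)$. Because $P$ and $Q$ are prime divisors on $X'$ and $Z'$ respectively, one has $a(P,X,B) = -\mathrm{coeff}_P B'$ and $a(Q,Z,B_Z+\M_Z) = -\mathrm{coeff}_Q B_{Z'}$. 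Thus the problem reduces to controlling $\mathrm{coeff}_Q B_{Z'}$.

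Next, I would apply the canonical bundle formula to the contraction $f':X'\rightarrow Z'$, yielding
\[
\mathrm{coeff}_Q B_{Z'} = \sup_E\left\{1 - \frac{1 + a(E,X',B')}{\mathrm{mult}_E(f')^*Q}\right\},
\]
where the supremum is over all prime divisors $E$ over $X'$ dominating $Q$. Since $P$ is such a divisor, evaluating the right-hand side at $E = P$ gives the one-sided bound
\[
\mathrm{coeff}_Q B_{Z'} \geq 1 - \frac{1 + a(P,X',B')}{\mathrm{mult}_P(f')^*Q}.
\]
The key observation here is that because $Z'$ is smooth in codimension one at the generic point of $Q$ and $P$ is a divisor on $X'$ dominating $Q$, the quantity $\mathrm{mult}_P(f')^*Q$ is a positive integer, in particular $\geq 1$.

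Finally, I would combine this with the upper bound $\mathrm{coeff}_Q B_{Z'} \leq 1$ coming from the generalized log canonicity of $(Z,B_Z+\M_Z)$. In the case $a(P,X,B) = -1$, the numerator $1 + a(P,X',B')$ vanishes, so the lower bound becomes $\geq 1$ and the two inequalities force $\mathrm{coeff}_Q B_{Z'} = 1$, giving $a(Q,Z,B_Z+\M_Z) = -1$. In the cases $a(P) \in (-1,0)$ and $a(P) \in (-1,0]$ the numerator $1+a(P,X',B')$ lies in $(0,1)$ and $(0,1]$ respectively, and dividing by the positive integer $\mathrm{mult}_P(f')^*Q$ keeps the fraction in the same range, which translates into the claimed bounds on $a(Q,Z,B_Z+\M_Z)$. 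The main technical point, and the only place where something nontrivial happens, is guaranteeing the integrality of $\mathrm{mult}_P(f')^*Q$; this can be arranged by replacing $Z'$ (and compatibly $X'$) by a higher birational model which is smooth at the generic point of $Q$, since such a replacement does not alter the discrepancies $a(P,X,B)$ or $a(Q,Z,B_Z+\M_Z)$.
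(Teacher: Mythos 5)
Your proposal follows essentially the same route as the paper's proof: pass to the models $X'$, $Z'$, invoke the explicit supremum formula for $\mathrm{coeff}_Q B_{Z'}$ from the canonical bundle formula, specialize the supremum at $E = P$, and use that $\mathrm{mult}_P(f')^*Q$ is a positive integer because $Z'$ (being normal) is smooth at the generic point of $Q$. The one small efficiency you could drop is the remark at the end about replacing $Z'$ by a higher model to ensure integrality of $\mathrm{mult}_P(f')^*Q$: this is automatic since $Z'$ is normal, hence smooth at the generic point of the prime divisor $Q$, so $Q$ is Cartier there and the multiplicity is already a positive integer. You also make the upper bound $\mathrm{coeff}_Q B_{Z'} \leq 1$ (from generalized log canonicity of the base) explicit, which the paper leaves implicit; that is a welcome clarification, since the case $a(P,X,B)=-1$ really does require both inequalities to pin down $\mathrm{coeff}_Q B_{Z'}=1$.
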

	\subsection{Example}
	The following example is provided by Stefano Filipazzi, which shows that in Theorem \ref{Main theorem 1}, we can not bound the degree of the finite cover $\pi:W\rightarrow Z$ without assumptions on the coefficient set of $B$.
	\begin{exa}\label{Stefano's example}
		Consider an elliptic curve $E$ with an $n$-torsion point $P$, let $t$ be a primitive $n$-th root of unity, consider the action of $\mathbb{Z}_n$ on $E\times \mathbb{P}^1$ via $(x,y)\rightarrow (x+P,ty)$. The action has no fixed point, let $X_n$ be the quotient, then the quotient map $E\times \mathbb{P}^1\rightarrow X_n$ is \'etale. It is easy to see that $X_n$ has a smooth $\mathbb{P}^1$ fibration to an elliptic curve $E'$, which is the quotient of $E$ under the action $x\rightarrow x+P$. 
		
		Let $B'_n$ be a $\mathbb{Q}$-divisor on $\mathbb{P}^1$ such that $\mathrm{coeff}(B'_n)<1$, $K_{\mathbb{P}^1}+B'\sim_{\mathbb{Q}} 0$, and $B'_n$ is invariant under the action $y\rightarrow ty$. Then $E\times B'_n$ is invariant under the action of $\mathbb{Z}_n$ and $(E\times \mathbb{P}^1, E\times B'_n)$ is klt, let $B_n$ be the quotient. Since $E\times \mathbb{P}^1\rightarrow X_n$ is \'etale, we have $K_{X_n}+B_n\sim_{\mathbb{Q}} 0$ and $(X_n,B_n)$ is a klt pair.
		It is easy to see that we need a base change of degree $n$ to make the fibration generically trivial.
	\end{exa}
	
	\section{Fibration structures in Calabi--Yau pairs}

	The main result in this section is the following:
	
	\begin{prop}\label{Calabi--Yau pair has a tower of Fano fibration structure which has no very exceptional divisor}
		Suppose $(Y,B_Y)$ is a projective klt Calabi--Yau pair with $B_Y\neq 0$. Then there exist a flop $(X,B)\dashrightarrow (Y,B_Y)$ and a contraction $f:X\rightarrow Z$ such that
		\begin{itemize}
			\item $K_Z\sim_{\mathbb{Q}} 0$,
			\item $f$ factors as a sequence of Fano contractions of relative Picard number 1, and
			\item $f$ has no very exceptional divisor.
		\end{itemize}
		
	\end{prop}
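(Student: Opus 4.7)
The plan is to iterate Mori fiber space constructions in the spirit of \cite[Theorem 3.1]{BDCS20}, arranging the construction so the resulting fibration has no very exceptional divisor.

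I begin by passing to a small $\mathbb{Q}$-factorial modification $(X_0, B_0) \to (Y, B_Y)$, which is automatically a flop; this reduces to the case where $(X_0, B_0)$ is $\mathbb{Q}$-factorial klt Calabi--Yau with $B_0 \neq 0$. Since $-K_{X_0} \sim_{\mathbb{Q}} B_0$ is nonzero and effective, intersecting with a power of an ample divisor shows $K_{X_0}$ is not pseudo-effective, so a $K_{X_0}$-MMP with scaling of an ample divisor terminates in a Mori fiber space $f_1 : X_1 \to Z_1$ of relative Picard number one. Using the Mori chamber decomposition of the movable cone of the klt Calabi--Yau pair $(X_0, B_0)$, I confine this MMP to $\overline{\mathrm{Mov}}(X_0)$ so that every step is a flop and $X_1 \dashrightarrow Y$ remains a flop. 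The canonical bundle formula of \S 2.4 then produces a generalized klt pair $(Z_1, B_{Z_1} + \M_{Z_1})$ with $K_{Z_1} + B_{Z_1} + \M_{Z_1} \sim_{\mathbb{Q}} 0$.

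I now iterate. If $K_{Z_1} \sim_{\mathbb{Q}} 0$, I set $Z = Z_1$ and stop. Otherwise $B_{Z_1} + \M_{Z_1}$ is not numerically trivial (else $K_{Z_1} \equiv 0$ and abundance for klt Calabi--Yau varieties would give $K_{Z_1} \sim_{\mathbb{Q}} 0$), so $K_{Z_1}$ is not pseudo-effective and a further Fano contraction exists after a flop of $Z_1$, which I lift to a compatible flop of $X_1$ using that flops are isomorphisms in codimension one. Since the dimension of the base drops at each step, this terminates in at most $\dim Y$ steps and yields a factorization $X \to Z_1 \to \cdots \to Z_k = Z$ into Fano contractions of relative Picard number one, with $K_Z \sim_{\mathbb{Q}} 0$ and $X \dashrightarrow Y$ a flop.

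The main technical obstacle, and the distinction from \cite{BDCS20}, is arranging that $f : X \to Z$ has no very exceptional divisor. Following the approach indicated in the introduction, I would strengthen the inductive construction so that at every MFS stage, every vertical prime divisor dominates a prime divisor on the base. If a very exceptional divisor $P$ on $X$ over $Z$ nevertheless appears, I run a $(K_X + B + \epsilon P)$-MMP over $Z$ for small $\epsilon > 0$; since $K_X + B \sim_{\mathbb{Q}} f^*(K_Z + B_Z + \M_Z)$ is vertically $\mathbb{Q}$-trivial and $P$ is very exceptional, the negativity lemma for very exceptional divisors forces the MMP to contract $P$ without affecting $X$ in codimension one outside $P$. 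The most delicate point I anticipate is checking compatibility: I would verify that any such $P$ is already exceptional over $Y$, so its contraction preserves the flop, and that contracting all very exceptional divisors simultaneously via a single relative MMP of $(K_X + B + \epsilon \sum_i P_i)$ over $Z$ preserves the tower of Fano contractions.
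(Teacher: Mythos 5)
Your setup is essentially the paper's: reduce to the $\mathbb{Q}$-factorial case, build a tower of Fano contractions of relative Picard number one by iterating MFS constructions and lifting along flops as in \cite{BDCS20}, and strengthen the construction so that every vertical prime divisor dominates a divisor on the base. That much matches the paper's Theorems \ref{construct the Fano contractions step by step} and \ref{Calabi--Yau pair has a tower of Fano fibration structure which is flat in codimension 1} (together with Lemma \ref{make morphisms flat in codimension 1}, which extracts the problematic divisors on the base). The gap is in the final step, where you try to eliminate any remaining very exceptional divisor.

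Your proposed fallback — running a $(K_X+B+\epsilon P)$-MMP over $Z$ to contract a very exceptional prime divisor $P$ — cannot work within the constraints of the statement. Contracting the prime divisor $P$ alters $X$ in codimension one, so $X\dashrightarrow Y$ would no longer be a flop. The compatibility check you anticipate, \emph{verifying that $P$ is exceptional over $Y$}, is in fact impossible: since $X\dashrightarrow Y$ is an isomorphism in codimension one, no prime divisor on $X$ can be exceptional over $Y$. More fundamentally, you have separated two things that the paper shows are not separate. A prime divisor $P$ can be very exceptional even if $P$ dominates a prime divisor $Q$ of $Z$ — namely if $f^*Q$ has several components and $P$ is just one of them — so arranging that every vertical divisor dominates a divisor on the base does not by itself eliminate very exceptional divisors, and no MMP fix is available.

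What you are missing is the isotriviality argument. After the tower is built with the property that every vertical divisor dominates a divisor on $Z$, the canonical bundle formula gives $B_Z\sim_{\mathbb{Q}}\M_Z\sim_{\mathbb{Q}}0$, so by Ambro's theorem there is a finite cover $W\to Z$ making the fibration generically trivial (Lemma \ref{generically trivial after a finite base change}); then Theorem \ref{generically trivial is trivial in codimension 1} shows $X_W$ is isomorphic in codimension one to $X_g\times W$. On $X_g\times W$ each prime divisor on $W$ pulls back to a single irreducible component, and using that $f$ has reduced fibers in codimension one (again from $B_Z=0$), one concludes that each prime divisor $Q$ on $Z$ has a \emph{unique} prime divisor of $X$ dominating it — that is Lemma \ref{every irreducible component of the boundary dominates the base}. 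So "every vertical divisor dominates a divisor on the base" combined with the $\mathbb{Q}$-triviality of $K_{X/Z}+B$ already rules out very exceptional divisors outright; there is nothing left to contract and no additional MMP is needed.

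A smaller inaccuracy: confining the $K_{X_0}$-MMP entirely to $\overline{\mathrm{Mov}}(X_0)$ so that every step is a flop is not available in general; the MMP may require genuine divisorial contractions. The paper keeps those divisorial contractions as part of the tower (they are Fano contractions of relative Picard number one between $\mathbb{Q}$-factorial varieties), and only the flips are replaced by lifting the tower along small modifications, via \cite[Proposition 3.7]{BDCS20}.
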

	
	\begin{lemma}\label{generically trivial after a finite base change}
		Suppose $(X,B)$ is a projective klt pair, $f:X\rightarrow Z$ is a contraction such that 
		$$K_{X/Z}+B\sim_{\mathbb{Q}} 0.$$ Then there exists a finite cover $\bar{Z}\rightarrow Z$ such that
		$$(\bar{X},\bar{B})\rightarrow \bar{Z}$$ is a generically trivial fibration, where $\bar{X}$ is the normalization of the main component of $X\times_Z \bar{Z}$ and $\bar{B}$ is the $\mathbb{Q}$-divisor such that $K_{\bar{X}}+\bar{B}$ is equal to the pullback of $K_X+B$.
		\begin{proof}
			Because $K_{X/Z}+B\sim_{\mathbb{Q}} 0$ and $K_X+B$ is $\mathbb{Q}$-Cartier, then $K_X+B\sim_{\mathbb{Q},Z} 0$. By the canonical bundle formula, there exists a generalized pair $(Z,B_Z+\M_Z)$ such that 
			$$K_X+B\sim_{\mathbb{Q}} f^*(K_Z+B_Z+\M_Z).$$
			By assumption we have $B_Z+\M_Z\sim_{\mathbb{Q}} 0$. Because $B\geq 0$, then $B_Z\geq 0$, which means $B_Z\sim_{\mathbb{Q}} \M_Z \sim_{\mathbb{Q}} 0$. Then apply \cite[Theorem 4.7]{Amb05}.
		\end{proof}
	\end{lemma}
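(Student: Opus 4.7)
The plan is to combine the canonical bundle formula recalled just above the lemma with Ambro's isotriviality theorem \cite[Theorem 4.7]{Amb05}, which is designed precisely for klt-trivial fibrations whose moduli $\mathbf{b}$-divisor is $\mathbb{Q}$-trivial.

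First I would rewrite the hypothesis $K_{X/Z}+B\sim_{\mathbb{Q}} 0$ as $K_X+B\sim_{\mathbb{Q}} f^*K_Z$, so in particular $K_X+B\sim_{\mathbb{Q},Z} 0$. Since $(X,B)$ is klt and $K_X+B$ is $\mathbb{Q}$-trivial on the generic fiber, the generic fiber $(X_\eta,B_\eta)$ is a klt Calabi--Yau pair, so the canonical bundle formula applies and yields a generalized lc pair $(Z,B_Z+\M_Z)$ together with
\[
K_X+B\sim_{\mathbb{Q}} f^*(K_Z+B_Z+\M_Z).
\]
Comparing with $K_X+B\sim_{\mathbb{Q}} f^*K_Z$ and using that $f^*$ is injective on $\mathbb{Q}$-Cartier divisors up to $\mathbb{Q}$-linear equivalence (as $f$ is a contraction), one concludes $B_Z+\M_Z\sim_{\mathbb{Q}} 0$ on $Z$.

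Next I would separate the two summands. By construction $B_Z\geq 0$, while $\M_Z$ is the trace on $Z$ of a nef $\mathbf{b}$-divisor, in the sense that on a sufficiently high birational model $\phi:Z'\to Z$ the divisor $\M_{Z'}$ is nef and $\phi_*\M_{Z'}=\M_Z$. The identity $B_Z+\M_Z\sim_{\mathbb{Q}} 0$ exhibits $-\M_Z$ as $\mathbb{Q}$-linearly equivalent to the effective divisor $B_Z$; pulling back to $Z'$ and using that the non-exceptional part of $B_{Z'}$ remains effective, one sees that $\M_{Z'}$ is simultaneously nef and anti-effective up to exceptional adjustments, so $\M_{Z'}\equiv 0$. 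The semi-ampleness of the moduli $\mathbf{b}$-divisor in this Calabi--Yau-fiber setting then upgrades this to $\M_{Z'}\sim_{\mathbb{Q}} 0$, whence $\M_Z\sim_{\mathbb{Q}} 0$ and $B_Z\sim_{\mathbb{Q}} 0$.

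With the moduli $\mathbf{b}$-divisor $\mathbb{Q}$-trivial, the fibration $f$ is isotrivial in the sense required by \cite[Theorem 4.7]{Amb05}, which produces a finite cover $\bar Z\to Z$ such that the normalized base change $(\bar X,\bar B)\to\bar Z$ is generically isomorphic to a product with a fixed klt Calabi--Yau fiber, i.e.\ generically trivial. The main obstacle I anticipate is the separation step $\M_Z\sim_{\mathbb{Q}} 0$ rather than merely $\M_Z\equiv 0$: one must invoke the $\mathbf{b}$-semi-ampleness input that makes Ambro's theorem directly applicable, which is precisely the reason the canonical bundle formula formalism is set up with a distinguished nef representative on a high model.
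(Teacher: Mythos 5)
Your proposal follows the same strategy as the paper: apply the canonical bundle formula to get $B_Z+\M_Z\sim_{\mathbb{Q}} 0$, separate the two summands to conclude $B_Z=0$ and $\M\sim_{\mathbb{Q}} 0$ as a $\mathbf{b}$-divisor, then invoke Ambro's Theorem 4.7. The paper's own proof is actually \emph{more} terse than yours on the separation step ("Because $B\geq 0$, then $B_Z\geq 0$, which means $B_Z\sim_{\mathbb{Q}}\M_Z\sim_{\mathbb{Q}} 0$"), so you are filling in a real gap.

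The one place you overshoot is the last upgrade. You first argue $\M_{Z'}\equiv 0$ and then reach for $\mathbf{b}$-semi-ampleness to get $\M_{Z'}\sim_{\mathbb{Q}} 0$; full $\mathbf{b}$-semi-ampleness of the moduli $\mathbf{b}$-divisor is conjectural in general (what Ambro proves in the cited paper is $\mathbf{b}$-nefness and abundance), so as stated this is not a result you should treat as a black box. It is also unnecessary: the intermediate passage through numerical triviality is the thing creating the problem. On a high model $\phi:Z'\to Z$ where $\M$ descends, $\M_{Z'}$ is nef and $\phi^*\M_Z=\M_{Z'}+E_\phi$ with $E_\phi\geq 0$ exceptional by negativity, so pulling back $B_Z+\M_Z\sim_{\mathbb{Q}} 0$ gives $\M_{Z'}\sim_{\mathbb{Q}} -(\phi^*B_Z+E_\phi)$ with $D:=\phi^*B_Z+E_\phi\geq 0$. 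Intersecting both sides with $H^{\dim Z-1}$ for an ample $H$ forces $D\cdot H^{\dim Z-1}=\M_{Z'}\cdot H^{\dim Z-1}=0$, hence $D=0$; this yields $B_Z=0$ and $\M_{Z'}\sim_{\mathbb{Q}} 0$ directly, in $\mathbb{Q}$-linear (not merely numerical) equivalence, with no abundance or semi-ampleness input. With that adjustment your argument is complete and matches the paper's.
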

	
	\begin{thm}\label{generically trivial is trivial in codimension 1}
		Let $(X,B)$ be a projective klt pair and $f:X\rightarrow Z$ a contraction such that 
		$$K_{X/Z}+B\sim_{\mathbb{Q}} 0,$$
		and every $f$-vertical prime divisor on $X$ dominates a prime divisor on $Z$. 
		
		Suppose $\pi:W\rightarrow Z$ is a finite cover such that $(X_W,B_W)\rightarrow W$ is generically trivial, where $X_W$ is the normalization of the main component of $X\times_Z W$ and $B_W$ is the $\mathbb{Q}$-divisor such that $K_{X_W}+B_W$ is equal to the pull-back of $K_{X}+B$, then there exists a big open subset $U\hookrightarrow Z$ such that
		\begin{itemize}
			\item $X_W$ and $F\times W$ are isomorphic in codimension 1, and
			\item all fibers of $(X,B)\times_Z U\rightarrow U$ over closed points are crepant birationally equivalent.
		\end{itemize}
		\begin{proof}
			First we prove that $f$ has reduced fibers over a big open subset.
			
			By the same argument as in Lemma \ref{generically trivial after a finite base change}, we have
			$B_Z\sim_{\mathbb{Q}} \M_Z\sim_{\mathbb{Q}} 0$, where $B_Z,\M_Z$ are defined by the canonical bundle formula 
			$$K_X+B\sim_{\mathbb{Q}} f^*(K_Z+B_Z+\M_Z).$$
			Because $B_Z=0$, then for any prime divisor $P$ on $Z$, there exists a big open subset $U\hookrightarrow Z$ such that $(X,B+f^*P)$ is lc over $U$. In particular, $f^*P$ is reduced over a big open subset for every prime divisor $P$, then $f$ has reduced fibers over a big open subset. Note $Z$ is normal, then we may assume $U$ is smooth and $P$ is Cartier on $U$, then $f^*P$ is well defined over $U$.
			
			Let $\pi:W\rightarrow Z$ be a finite cover such that $(X_W,B_W)\rightarrow W$ is generically trivial, where $X_W$ is the normalization of the main component of $X\times_Z W$ and $B_W$ is the $\mathbb{Q}$-divisor such that 
			$K_{X_W}+B_W$ is equal to the pull-back of $K_{X}+B$. We denote the finite cover $X_W\rightarrow X$ by $\pi_X$.

			Because $X_W$ is the normalization of the main component of $X\times_Z W$, $f:X\rightarrow Z$ has reduced fibers over a big open subset, and $W\rightarrow Z$ is finite, then $X_W$ is isomorphic to $X\times_Z W$ over a big open subset of $W$ and $f_W:X_W\rightarrow W$ has reduced fibers over a big open subset.

			Define $Y:=X_g\times W$ and $B'_Y:=p_1^*B_g$, where $(X_g,B_g)$ is a general fiber of $f$ and $p_1$ is the projection $X_g\times \bar{Z}\rightarrow X_g$, denote the projection $Y\rightarrow W$ by $f_Y$. Because 
			$K_{X_g}+B_g\sim_{\mathbb{Q}} 0$, then
			$$K_Y+B'_Y\sim_{\mathbb{Q}} f_Y^*K_{\bar{Z}}.$$
			
			By the Hurwitz's formula, there exists an effective divisor $R\geq 0$ such that
			$$K_W=\pi^*K_Z+R.$$
			Define $B_Y:=B'_Y-f_Y^*R$, then 
			$$K_Y+B_Y\sim_{\mathbb{Q}} f_Y^*\pi^* K_Z.$$
			
			Note 
			$K_{X_W}+B_W\sim_{\mathbb{Q}} \pi_X^*(K_X+B)\sim_{\mathbb{Q}} \pi_X^*f^*K_Z$. Because both $\bar{f}:(X_W,B_W)\rightarrow W$ and $f_Y:(Y,B_Y)\rightarrow W$ are generically trivial, then $(X_W,B_W)$ is crepant birationally equivalent to $(Y,B_Y)$.

			Suppose $P$ is a prime divisor on $Z$. Let $H$ be a very ample divisor on $Z$ such that $H-P\sim L\geq 0$ for an effective divisor $L$ and $L$ has no common component with $P$, then $(X_W,B_W+\bar{f}^*\pi^*(P+L))$ is crepant birationally equivalent to $(Y,B_Y+f_Y^*\pi^*(P+L))$. By the Hurwitz formula, there exists a $\mathbb{Q}$-divisor $P_W$ on $W$ such that $K_{W}+P_W\sim \pi^*(K_Z+P+L)$. Let $Q$ be a prime divisor on $W$ which dominates $P$. By \cite[2.42]{Kol13}, $\mathrm{coeff}_QP_W=1$.
			Because $Y\cong {X_g}\times W$ and $K_Y+B_Y+f_Y^*\pi^*(P+L)\sim_{\mathbb{Q}} f_Y^*\pi^*(K_Z+P+L)$, then $f_Y^{-1}Q$ is the only one lc place of $(Y,B_Y+f_Y^*\pi^*(P+L))$ that dominates $Q$.

			Because $f$ has reduced fibers over a big open subset, then $f_W$ has reduced fibers over a big open subset of $W$, in particular, every irreducible component of $f_W^{-1}Q$ is a nklt center of $(X_W,B_W+f_W^*\pi^*(P+L))$ dominating $Q$. 
			Also because $(X_W,B_W+f_W^*\pi^*(P+L))$ is crepant birationally equivalent to $(Y,B_Y+f_Y^*\pi^*(P+L))$, then $f_W^{-1}Q$ and $f_Y^{-1}Q$ are the same divisor on birational models of $X_W$ and $Y$.
			
			By assumption, there is an open subset $U_W\hookrightarrow W$ such that $X_W\times_W U_W\cong {X_g}\times U_W$. 
			Also because every vertical prime divisor of $X_W$ dominates a divisor on $W$ and $f_W^{-1}Q$ and $f_Y^{-1}Q$ are the same divisor for every prime divisor $Q$ on $W$, then $X_W$ is isomorphic in codimension 1 to $X_g\times W$. Furthermore, it is easy to see that $(X_W,B_W)\rightarrow W$ has crepant birationally equivalent fibers over a big open subset of $U_W$, then $(X,B)\rightarrow Z$ has crepant birationally equivalent fibers over a big open subset $U\rightarrow Z$.
		\end{proof}
	\end{thm}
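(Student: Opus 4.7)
The plan is to combine the canonical bundle formula with a comparison to the trivial family $X_g \times W$ in order to match vertical divisors and lc places. First I would apply the canonical bundle formula to $f : X \to Z$: since $K_{X/Z} + B \sim_{\mathbb{Q}} 0$, writing $K_X + B \sim_{\mathbb{Q}} f^*(K_Z + B_Z + \M_Z)$ forces $B_Z + \M_Z \sim_{\mathbb{Q}} 0$, and because $B_Z \geq 0$ and $\M_Z$ is pseudo-effective, both are $\mathbb{Q}$-trivial. The vanishing of $B_Z$ translates, via the lct description $\mathrm{coeff}_P B_Z = 1 - \mathrm{lct}$, into the statement that $f$ has reduced fibers over codimension $1$ points of $Z$, hence over some big open subset $U_0 \subset Z$; this reducedness then lifts to $f_W : X_W \to W$ over a big open subset of $W$ since $\pi$ is finite and $X_W$ is the normalization of the main component.

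Next I would set up the reference trivial family $Y := X_g \times W$ with $B_Y' := p_1^* B_g$, and correct by the ramification of $\pi$ using Hurwitz's formula ($K_W = \pi^* K_Z + R$) to get $B_Y := B_Y' - f_Y^* R$ satisfying $K_Y + B_Y \sim_{\mathbb{Q}} f_Y^* \pi^* K_Z$. Since both $(X_W, B_W) \to W$ and $(Y, B_Y) \to W$ are generically trivial with isomorphic generic fibers and log canonical divisors pulled back from $Z$, they are crepant birationally equivalent over $W$.

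The crux is to show that for every prime divisor $Q \subset W$ dominating a prime $P \subset Z$, the preimage $f_W^{-1} Q$ is irreducible. To separate $P$ from an auxiliary divisor I pick a very ample $H \supset P$ with $H \sim P + L$ for some effective $L$ sharing no components with $P$; by Hurwitz applied to $\pi^*(K_Z + P + L)$ and \cite[2.42]{Kol13}, the coefficient of $Q$ in the log-ramification divisor equals $1$. Because $Y = X_g \times W$ is a product, the only lc place of $(Y, B_Y + f_Y^* \pi^*(P + L))$ dominating $Q$ is the divisor $f_Y^{-1} Q$ itself. Transporting this through the crepant birational equivalence with $(X_W, B_W + f_W^* \pi^*(P+L))$, and using that each irreducible component of $f_W^{-1} Q$ is an lc place of this pair (by the reducedness of $f_W$-fibers over a big open subset), forces $f_W^{-1} Q$ to have a unique component dominating $Q$; here I expect the main technical obstacle, namely tracking divisors through a common log resolution to compare lc places on the two crepant birational models.

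Finally, generic triviality of $X_W \to W$ together with the hypothesis that every vertical prime divisor on $X$, and hence on $X_W$, dominates a prime divisor on $W$, combined with the just-proved irreducibility of $f_W^{-1} Q$ for each prime $Q$ on $W$, gives that $X_W$ is isomorphic in codimension $1$ to $X_g \times W$. Over a big open subset $U_W \subset W$ where this isomorphism holds and fibers are reduced, the crepant pair structure on $X_g \times W$ is fiberwise constant, and descending through the finite cover $\pi$ yields a big open subset $U \subset Z$ such that all closed-point fibers of $(X, B) \times_Z U \to U$ are crepant birationally equivalent, completing the proof.
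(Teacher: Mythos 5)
Your proposal is correct and follows essentially the same approach as the paper's proof: both apply the canonical bundle formula to obtain reduced fibers over a big open subset, compare $(X_W, B_W)$ to the reference family $(X_g \times W, p_1^*B_g - f_Y^*R)$ via Hurwitz's formula and generic triviality, use the auxiliary divisor $H \sim P + L$ together with \cite[2.42]{Kol13} to show $f_Y^{-1}Q$ is the unique lc place dominating $Q$, and transport this through the crepant birational equivalence to force irreducibility of $f_W^{-1}Q$ and hence codimension-one isomorphism.
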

	\begin{lemma}\label{every irreducible component of the boundary dominates the base}
		Let $(X,B)$ be a projective klt pair and $f:X\rightarrow Z$ a contraction such that 
		$$K_{X/Z}+B\sim_{\mathbb{Q}} 0,$$
		and every $f$-vertical prime divisor on $X$ dominates a prime divisor on $Z$. Then every irreducible component of $B$ dominates $Z$ and $f$ has no very exceptional divisor.
		\begin{proof}
			Suppose there exists a prime divisor $P$ on $X$ such that $\mathrm{coeff}_P B=a >0$ and $P$ is vertical over $Z$. By assumption, $P$ dominates a divisor $Q$ on $Z$. 
			
			By the canonical bundle formula we have
			$$K_X+B\sim_{\mathbb{Q}} f^*(K_Z+B_Z+\M_Z),$$
			and by the same argument as in Lemma \ref{generically trivial after a finite base change}, we have
			$B_Z\sim_{\mathbb{Q}} \M_Z\sim_{\mathbb{Q}} 0$.
			Because every log center of $B$ dominates a log center of $(Z,B_Z+\M_Z)$ and $P$ is a log place $(X,B)$, then $Q$ is a log place of $(Z,B_Z+\M_Z)$ and $\mathrm{coeff}_Q(B_Z)>0$. This contradicts with $ B_Z\sim_{\mathbb{Q}} 0$.
			
			By Lemma \ref{generically trivial after a finite base change}, there exists a finite cover $W\rightarrow Z$ such that $(X_W,B_W)\rightarrow W$ is a generically trivial fibration. Also because every $f$-vertical prime divisor on $X$ dominates a prime divisor on $Z$, by Theorem \ref{generically trivial is trivial in codimension 1}, $X_W$ is isomorphic in codimension 1 with ${X_g}\times W$, where ${X_g}$ is a general fiber of $f:X\rightarrow Z$.
			
			Suppose $f$ has very exceptional divisors, since every $f$-vertical prime divisor on $X$ dominates a prime divisor on $Z$, then there exist two prime divisors $Q_1,Q_2$ on $X$ such that $f(Q_1)=f(Q_2)=P$ is a divisor on $Z$. Let $P_W$ be the preimage of $P$ on $W$. Because ${X_g}$ is connected, there is only one divisor on ${X_g}\times W$ that dominates $P_W$, which is ${X_g}\times P_W$. Then preimages of $Q_1$ and $Q_2$ on $X_g\times W$ are both ${X_g}\times P_W$, which is not possible unless $Q_1=Q_2$.
		\end{proof}
	\end{lemma}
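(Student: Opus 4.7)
The plan is to argue both conclusions by contradiction using the canonical bundle formula applied to $f:(X,B)\rightarrow Z$. For the first assertion, I would suppose some prime divisor $P$ of coefficient $a>0$ in $B$ is $f$-vertical. By hypothesis $P$ dominates a prime divisor $Q$ on $Z$. Writing $K_X+B\sim_{\mathbb{Q}} f^*(K_Z+B_Z+\M_Z)$ and using $K_{X/Z}+B\sim_{\mathbb{Q}} 0$ together with $f_*\mathcal{O}_X=\mathcal{O}_Z$, I obtain $B_Z+\M_Z\sim_{\mathbb{Q}} 0$; this is exactly the reasoning already used in Lemma \ref{generically trivial after a finite base change}. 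Since $B_Z\geq 0$ and $\M_Z$ is (b-)nef, hence pseudo-effective, both summands must be $\mathbb{Q}$-linearly trivial, and the effectivity of $B_Z$ then forces $B_Z=0$.

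Next I would derive a contradiction by showing $\mathrm{coeff}_Q B_Z>0$. Because $a(P,X,B)=-a\in(-1,0)$, i.e.\ $P$ is a log place of $(X,B)$ dominating the prime divisor $Q$, Lemma \ref{lc place dominates lc place} (applied with $X'=X$, $Z'=Z$, so $h$ and $g$ are identities) yields $a(Q,Z,B_Z+\M_Z)\in(-1,0)$, equivalently $\mathrm{coeff}_Q B_Z>0$, contradicting $B_Z=0$. Alternatively one could quote the coefficient formula directly: $\mathrm{coeff}_Q B_Z\geq 1-\frac{1-a}{\mathrm{mult}_P f^*Q}>0$, since $\mathrm{mult}_P f^*Q\geq 1$ and $a>0$.

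For the second assertion, suppose $f$ admits a very exceptional divisor $P$. The standing hypothesis produces a prime divisor $Q\subset Z$ with $f(P)=Q$, and by the definition of very exceptionality the support of $f^*Q$ is not contained in $P$, so there is a second irreducible component $P'$ of $f^{-1}(Q)$ distinct from $P$. I would then invoke Lemma \ref{generically trivial after a finite base change} to obtain a finite cover $W\rightarrow Z$ over which the pull-back becomes generically trivial, and Theorem \ref{generically trivial is trivial in codimension 1} to conclude that the normalization $X_W$ of the main component of $X\times_Z W$ is isomorphic in codimension $1$ to $X_g\times W$ for a general fiber $X_g$. Letting $Q_W\subset W$ be the preimage of $Q$ and using that $X_g$ is irreducible, the only divisor of $X_g\times W$ dominating $Q_W$ is $X_g\times Q_W$; but the strict transforms of $P$ and $P'$ would produce two distinct divisors of $X_W$ dominating $Q_W$, contradicting the codimension-$1$ identification.

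The main technical point I expect to stumble over is the first part: one must be careful to conclude $B_Z=0$, not merely that $B_Z$ is $\mathbb{Q}$-linearly trivial, and to verify that Lemma \ref{lc place dominates lc place} applies cleanly when $P$ itself is a prime divisor on $X$ (so that no nontrivial birational model of $X$ is needed). The second part is then essentially a bookkeeping argument once the fiber-product identification is in hand; the only care needed is the distinction between $X\times_Z W$ and its normalized main component, which is precisely what Theorem \ref{generically trivial is trivial in codimension 1} is designed to handle.
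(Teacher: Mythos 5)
Your proposal is correct and follows essentially the same route as the paper: derive $B_Z\sim_{\mathbb{Q}}0$ (hence $B_Z=0$) from the canonical bundle formula, contradict this via the coefficient formula/Lemma~\ref{lc place dominates lc place} applied to the vertical log place $P$ of $(X,B)$, then use Lemma~\ref{generically trivial after a finite base change} and Theorem~\ref{generically trivial is trivial in codimension 1} to identify $X_W$ with $X_g\times W$ in codimension one, over which only one prime divisor can dominate a given divisor on $W$. Your reliance on Lemma~\ref{lc place dominates lc place} in the trivial case $X'=X$, $Z'=Z$ is exactly what the paper's phrase ``every log center of $B$ dominates a log center of $(Z,B_Z+\M_Z)$'' encapsulates, and your explicit remark that effectivity upgrades $B_Z\sim_{\mathbb{Q}}0$ to $B_Z=0$ is a welcome clarification of what the paper leaves implicit.
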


	\begin{lemma}\label{make morphisms flat in codimension 1}
		Let $(X,B)$ be a projective klt Calabi--Yau pair and $f:X\rightarrow Z$ a contraction. Suppose there exists a $f$-vertical divisors whose image on $Z$ has codimension $\geq 2$, then we have the following diagram 
		$$\xymatrix{
			(X',B') \ar@{-->}[r] \ar[d] & (X,B)\ar[d] \\
			Z' \ar[r] & Z,
		}$$
		such that
		\begin{itemize}
			\item $X'$ is $\mathbb{Q}$-factorial,
			\item $(X',B')\dashrightarrow (X,B)$ is a flop,
			\item every prime divisor on $X'$ which is vertical over $Z'$ dominates a divisor on $Z'$, and
			\item $(Z',B_{Z'})$ is a klt Calabi--Yau pair for a $\mathbb{Q}$-divisor $B_{Z'}$.
		\end{itemize}
		\begin{proof}
			Let $(Z,B_Z+\M_Z)$ be the generalized pair defined by the canonical bundle formula $K_{X}+B\sim_{\mathbb{Q}} f^*(K_Z+B_Z+\M_Z)$.
			
			Suppose $P_i,i\in \Lambda$ are the prime divisors on $X$ such that $f(P_i)$ has codimension $\geq 2$. Because $a(P_i,X,B)\leq 0$, then $P_i$ dominates a prime divisor $Q_i$ over $Z$ with discrepancy $a(Q_i,Z,B_Z+\M_Z)\leq 0$ according to Lemma \ref{lc place dominates lc place}. 
			Let $g:Z'\rightarrow Z$ be a birational morphism such that $g$ exactly extracts all $Q_i,i\in \Lambda$, let $B'_{Z'}$ be the $\mathbb{Q}$-divisor on $Z'$ such that $K_{Z'}+B'_{Z'}+\M_{Z'}\sim_{\mathbb{Q}} g^*(K_Z+B'_Z+\M_Z)\sim_{\mathbb{Q}} 0$, then $B'_{Z'}\geq 0$. Because $\M_{Z'}$ is $\mathbf{b}$-nef and abundant, there exists an effective $\mathbb{Q}$-divisor $B_{Z'}\sim_{\mathbb{Q}} B'_{Z'}+\M_{Z'}$ such that $(Z',B_{Z'})$ is klt and $K_{Z'}+B_{Z'}\sim_{\mathbb{Q}} 0$.
			
			Let $Y$ be a resolution of indeterminacy of $X\dashrightarrow Z'$, $B_{Y}$ the $\mathbb{Q}$-divisor such that $K_Y+B_Y$ is equal to the pullback of $K_X+B$, and $F$ the sum of exceptional divisors of $Y\rightarrow X$ which are horizontal over $Z$. Because $K_{X_g}+B_g\sim_{\mathbb{Q}} 0$, then $K_{Y_g}+B_{Y_g,>0}+\delta F_g$ has a good minimal model, where $\delta \in(0,1)$ is sufficiently small such that $(Y,B_{Y,>0}+\delta F)$ is klt. By \cite[Theorem 2.12]{HX13}, $(Y,B_{Y,>0}+\delta F)$ has a good minimal model $(X^m,B^m+\delta F^m)$ over $Z'$. 
			
			Because $Y\dashrightarrow X^m$ is a $K_Y+B_{Y,>0}+\delta F\sim_{\mathbb{Q}} B_{Y,<0}+\delta F$-MMP, it only contracts components of $B_{Y,<0}+\delta F$. Because $B_{Y,<0}+\delta F$ is exceptional over $X$, then there is a birational contraction $X^m\dashrightarrow X$. Also because $K_{X_g}+B_g\sim_{\mathbb{Q}} 0$, then $K_{X^m}+B^m+\delta F^m\sim_{\mathbb{Q},Z'} 0$. Since $F$ is horizontal over $Z$ and exceptional over $X$, then $F^m=0$. 
			
			Let $E$ be the sum of $X^m\dashrightarrow X$-exceptional divisors. By the construction of $Z'$, every prime divisor on $X$ which is vertical over $Z'$ dominates a divisor on $Z'$, then $E$ is very exceptional over $Z'$
			
			Because $(Y,B_{Y,>0})$ is klt, then $(X^m,B^m)$ is klt and we can choose $\epsilon>0$ such that $(X^m,B^m+\epsilon E)$ is klt. Since $K_{X^m}+B^m+\epsilon E\sim_{\mathbb{Q}} \epsilon E$, by \cite[Theorem 1.8]{Bir12}, a sequence of $K_{X^m}+B^m+\epsilon E$-MMP over $Z'$ will terminates with a model $X'$ and $X^m\rightarrow X'$ only contracts $E$, then $X$ is isomorphic in codimension 1 with $X'$. Let $B'$ be the pushforward of $B^m$, then $(X,B)\dashrightarrow (X',B')$ is a flop. By construction, every prime divisor on $X'$ which is vertical over $Z'$ dominates a divisor on $Z'$.
		\end{proof}
	\end{lemma}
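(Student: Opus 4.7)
The strategy is to first use the canonical bundle formula to produce a higher model $Z'$ of $Z$ on which the images of the ``bad'' vertical divisors become divisorial, and then to build a compatible birational model $X'$ of $X$ by running suitable MMPs.

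First, I would invoke the canonical bundle formula to write $K_X+B \sim_{\mathbb{Q}} f^*(K_Z+B_Z+\M_Z)$, where $(Z,B_Z+\M_Z)$ is a generalized lc pair; since $K_X+B \sim_{\mathbb{Q}} 0$, the base class $K_Z+B_Z+\M_Z$ is $\mathbb{Q}$-linearly trivial. For each prime divisor $P$ on $X$ that is vertical over $Z$ with $f(P)$ of codimension $\geq 2$, the klt hypothesis gives $a(P,X,B)\in(-1,0]$, so Lemma \ref{lc place dominates lc place} produces a prime divisor $Q$ over $Z$ dominated by $P$ with $a(Q,Z,B_Z+\M_Z)\leq 0$. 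Collect these as $\{Q_i\}_{i\in \Lambda}$.

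Next, I would construct $g:Z'\to Z$ extracting exactly the divisors $Q_i$ and no others. Writing $K_{Z'}+B'_{Z'}+\M_{Z'}\sim_{\mathbb{Q}}g^*(K_Z+B_Z+\M_Z)\sim_{\mathbb{Q}} 0$, the non-positivity of the discrepancies of the $Q_i$ guarantees $B'_{Z'}\geq 0$. Because the moduli $\mathbf{b}$-divisor is $\mathbf{b}$-nef and abundant in this setting, I would replace $B'_{Z'}+\M_{Z'}$ by an effective $\mathbb{Q}$-linearly equivalent divisor $B_{Z'}$ so that $(Z',B_{Z'})$ is a klt Calabi--Yau pair, as required.

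Finally, I would build $X'$ via two MMPs. Take a common resolution $Y$ of $X\dashrightarrow Z'$, let $B_Y$ pull back $K_X+B$, and let $F$ be the sum of $Y\to X$ exceptional divisors that are horizontal over $Z$. For small $\delta>0$, the pair $(Y,B_{Y,>0}+\delta F)$ admits a good minimal model $X^m$ over $Z'$ by \cite[Theorem 2.12]{HX13}, since the generic fiber over $Z$ is klt Calabi--Yau. This MMP runs on $B_{Y,<0}+\delta F$ and so only contracts divisors exceptional over $X$; the induced birational contraction $X^m\dashrightarrow X$ has exceptional locus $E$ that, by the construction of $Z'$, is very exceptional over $Z'$. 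A further MMP on $K_{X^m}+B^m+\epsilon E$ over $Z'$, which terminates by \cite[Theorem 1.8]{Bir12}, contracts $E$ and produces a small modification $X'$ of $X$; the pushforward $B'$ of $B^m$ then makes $(X',B')\dashrightarrow (X,B)$ a flop with $X'$ $\mathbb{Q}$-factorial.

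The main obstacle will be this final step: one must simultaneously ensure (i) the existence of a good minimal model of $(Y,B_{Y,>0}+\delta F)$ over $Z'$ that contracts precisely the horizontal exceptional divisors, and (ii) the termination of the second MMP that contracts only the very exceptional divisors over $Z'$. Both require the specialized relative MMP machinery for Calabi--Yau type pairs from \cite{HX13} and \cite{Bir12}, and it is essential that the model $Z'$ was built to guarantee that the leftover exceptional divisors are very exceptional over $Z'$, so that the second MMP falls within the scope of those results.
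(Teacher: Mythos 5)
Your proposal matches the paper's proof essentially step by step: the canonical bundle formula and Lemma \ref{lc place dominates lc place} to locate the $Q_i$, extracting them to build $(Z',B_{Z'})$ with $\mathbf{b}$-nefness and abundance of $\M$, the resolution $Y$ with horizontal exceptional part $F$, the good minimal model over $Z'$ via \cite[Theorem 2.12]{HX13} to reach $X^m$ with $F^m=0$, and the final very exceptional MMP over $Z'$ via \cite[Theorem 1.8]{Bir12} to contract $E$ and obtain the flop $X'$. The argument is correct and takes the same route as the paper.
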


	\begin{thm}\label{construct the Fano contractions step by step}
		Let $(Y,B_Y)$ be a projective klt Calabi--Yau pair with $B_Y\neq 0$ and $f_Y:Y\rightarrow Z$ a contraction such that every $f_Y$-vertical prime divisor dominates a divisor on $Z$ and $B_Y$ dominates $Z$.
		Then there exist a flop $(X,B)\dashrightarrow (Y,B_Y)$, a birational morphism $X\rightarrow X'$ over $Z$, and a Mori fiber space $X'\rightarrow W$ over $Z$ such that 
		\begin{itemize}
			\item $X\rightarrow X'$ factors as a sequence of divisorial Fano contractions over $Z$ of relative Picard number 1 between $\mathbb{Q}$-factorial varieties,
			\item every prime divisor on $X$ which is vertical over $W$ dominates a divisor on $W$, and
			\item $(W,B_W)$ is a klt Calabi--Yau pair for some $\mathbb{Q}$-divisor $B_W$.
		\end{itemize}
		\begin{proof}
			We prove the result by induction on relative dimension $\mathrm{dim}(Y/Z)$. Suppose the result holds in lower relative dimension.
			
			Because $B_Y$ dominates $Z$, then $-K_Y\sim_{\mathbb{Q}} B_Y \not\sim_{\mathbb{Q},Z} 0$. A sequence of $K_Y$-MMP over $Z$ will terminates to a Mori fiber space. Suppose
			$Y\dashrightarrow W$ is the first step of $K_Y$-MMP. If $Y\dashrightarrow W$ is a divisorial contraction, then we define $X:=X_0=Y$ and $X_1:=W$. If $Y\dashrightarrow W$ is a flip, then we define $X:=X_0=Y$.
			
			Suppose we have a flop $(Y,B_Y)\dashrightarrow (X,B)$ and a sequence of morphisms
			$$X=X_0\rightarrow X_1\rightarrow X_2\rightarrow ...\rightarrow X_r,$$
			where 
			\begin{itemize}
				\item $X_i$ is $\mathbb{Q}$-factorial,
				\item $-K_{X_i}$ is ample over $X_{i+1}$,
				\item $X_i\rightarrow X_{i+1}$ is a divisorial contraction, and 
				\item $\rho(X_i/X_{i+1})=1$ for all $i=0,...,r-1$.
			\end{itemize}
			We construct $X_{r+1}$ as the following.
			
			Let $X_r\dashrightarrow V$ be the next step of the $K_Y$-MMP. If $X_r\rightarrow V$ is a divisorial contraction, then we define $X_{r+1}:=V$.
			
			If $X_r\dashrightarrow V$ is a flip, then by \cite[Proposition 3.7]{BDCS20}, there exists a diagram
			$$\xymatrix{
				X_0 \ar[r] \ar@{-->}[d] & X_1 \ar[r]\ar@{-->}[d] & \cdots \ar[r] & X_{r-1} \ar[r]\ar@{-->}[d] & X_r \ar@{-->}[d] \\ 
				X'_0 \ar[r] & X'_1 \ar[r] & \cdots \ar[r]& X'_{r-1} \ar[r] & X'_r:=V,
			}$$
			where $X_i$ is isomorphic in codimension 1 with $X'_i$ and $X'_i$ is $\mathbb{Q}$-factorial for every $i=0,...,r$. Because $X_i\rightarrow X_{i+1}$ is a divisorial contraction, $-K_{X_i}$ is ample over $X_{i+1}$, and $\rho(X_i/X_{i+1})=1$, we have $X'_i\rightarrow X'_{i+1}$ is a divisorial contraction, $-K_{X'_i}$ is ample over $X'_{i+1}$, and $\rho(X'_i/X'_{i+1})=1$. Then we replace $X_i$ by $X'_i$ for all $i=0,...,r$ and continue.
			
			If $X_r\dashrightarrow V$ is a Mori fiber space. Let $B_r$ be the pushforward of $B$ on $X_r$. Because $-K_{X_r}\sim_{\mathbb{Q}} B_r$ is ample over $V$, then $B$ dominates $V$. We have the following two cases:
			
			Case 1: Every prime divisor on $X_0$ which is vertical over $ V$ dominates a divisor on $V$. Then we define $X':=X_r$ and $W:=V$. It is easy to see that $X\rightarrow X'$ and $X'\rightarrow V$ satisfies the requirements.
			
			Case 2: There exists a prime divisor on $X$ whose image on $V$ has codimension $\geq 2$. Since every prime divisor on $X_0$ dominates a divisor on $Z$, then $\mathrm{dim}(V)>\mathrm{dim}(Z)$. By Lemma \ref{make morphisms flat in codimension 1}, we have a diagram
			$$\xymatrix{
				(X'',B'') \ar@{-->}[r] \ar[d] & (X,B)\ar[d] \\
				V'' \ar[r] & V,
			}$$
			where
			\begin{itemize}
				\item $(X'',B'')\dashrightarrow (X,B)$ is a flop,
				\item $V''\rightarrow V $ is a birational contraction, and
				\item every prime divisor on $X''$ which is vertical over $V''$ dominates a prime divisor on $V''$.
			\end{itemize}
			Since $B$ dominates $V$, then $B''$ dominates $V''$. Because $\mathrm{dim}(X''/V'')<\mathrm{dim}(X/Z)$ and we assume the result in lower relative dimension, then we can apply the theorem on $X''\rightarrow V''$ to get a flop $(Y',B_Y')\dashrightarrow (X'',B'')$, a birational morphism $Y'\rightarrow Y''$ and a Mori fiber space $Y''\rightarrow W'$ over $V''$ such that every prime divisor on $Y'$ which is vertical over $W'$ dominates a divisor on $W'$. 
			
			Note divisorial Fano contractions and Mori fiber spaces over $V''$ are also divisorial Fano contractions and Mori fiber spaces over $Z$. Then we replace $X$ by $Y'$, define $X':=Y''$ and $W:=W'$, it is easy to see that $X\rightarrow X'\rightarrow W$ satisfies the requirements.
		\end{proof}
	\end{thm}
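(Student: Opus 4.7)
The strategy is to induct on the relative dimension $\dim(Y/Z)$. Since $B_Y$ dominates $Z$ and is nonzero, $-K_Y\sim_{\mathbb{Q}} B_Y\not\sim_{\mathbb{Q},Z} 0$, so a $K_Y$-MMP over $Z$ exists and terminates with a Mori fiber space. I would construct the required tower $X=X_0\to X_1\to \cdots \to X_r$ of $\mathbb{Q}$-factorial divisorial Fano contractions of relative Picard number one by following this MMP step by step: each divisorial contraction is appended directly to the tower, each flip is absorbed using the construction below, and the terminal Mori fiber space $X_r\dashrightarrow V$ triggers case analysis.

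The principal technical difficulty is that a flip $X_r\dashrightarrow V$ does not itself supply a new divisorial contraction compatible with the existing tower. Here I would invoke the parallel-tower statement \cite[Proposition 3.7]{BDCS20}, which, given the current tower and the flip, produces a parallel sequence $X'_0\to X'_1\to \cdots \to X'_r:=V$ where each $X'_i$ is $\mathbb{Q}$-factorial and isomorphic in codimension one to $X_i$, and each $X'_i\to X'_{i+1}$ is a divisorial Fano contraction of relative Picard number one. Replacing $X_i$ by $X'_i$ converts the flip into an identity at the new top, at the cost of an additional flop at the level of $X_0$; since the conclusion only asks that $(X,B)$ and $(Y,B_Y)$ be related by a flop, these flops accumulate harmlessly.

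When the MMP finally yields a Mori fiber space $X_r\dashrightarrow V$ that is not itself handled as a flip, two cases arise. If every vertical prime divisor on $X_0$ dominates a divisor on $V$, then I set $X':=X_r$ and $W:=V$, and the pair $(W,B_W)$ produced by the canonical bundle formula for $X\to W$ is klt Calabi--Yau, by the same boundary-plus-moduli reabsorption argument as in Lemma \ref{make morphisms flat in codimension 1}. Otherwise some vertical divisor has image of codimension $\geq 2$ on $V$; since every vertical divisor on $X_0$ dominates a divisor on $Z$ by hypothesis, this forces $\dim V > \dim Z$. I then apply Lemma \ref{make morphisms flat in codimension 1} to obtain a flop $(X'',B'')\dashrightarrow (X,B)$ and a birational model $V''\to V$ with $(V'',B_{V''})$ klt Calabi--Yau and every vertical divisor on $X''$ dominating a divisor on $V''$. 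Since $\dim(X''/V'')<\dim(X/Z)$ and $B''$ dominates $V''$, the induction hypothesis applied to $X''\to V''$ produces the required data over $V''$, which then maps to $Z$ through $V''\to V\to Z$.

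I expect the main obstacle to lie in the flip step: correctly invoking the parallel-tower lemma and verifying that all intermediate varieties continue to be $\mathbb{Q}$-factorial with the desired divisorial Fano contractions of relative Picard number one between them. A secondary bookkeeping point is ensuring termination of the outer induction — in Case 2 the relative dimension strictly decreases from $\dim(X/Z)$ to $\dim(X''/V'')$ because $\dim V>\dim Z$, which is what powers the recursion.
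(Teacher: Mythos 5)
Your proposal is correct and follows essentially the same route as the paper: induction on relative dimension driven by a $K_Y$-MMP over $Z$, absorbing flips via the parallel-tower result \cite[Proposition 3.7]{BDCS20}, and invoking Lemma \ref{make morphisms flat in codimension 1} at the Mori fiber space step to pass to a smaller relative dimension when some vertical divisor collapses. You also correctly identified both the role of the hypothesis that vertical divisors dominate divisors on $Z$ (forcing $\dim V>\dim Z$ in Case 2) and the source of termination.
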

	\begin{thm}\label{Calabi--Yau pair has a tower of Fano fibration structure which is flat in codimension 1}
		Suppose $(Y,B_Y)$ is a projective klt Calabi--Yau pair with $B_Y\neq 0$, then there exist a flop $(X,B)\dashrightarrow (Y,B_Y)$ and a sequence of morphisms
		$$X:=X_0\rightarrow X_1\rightarrow ...\rightarrow X_k:=Z,$$
		where
		\begin{itemize}
			\item $K_Z\sim_{\mathbb{Q}} 0$,
			\item $X_i$ is $\mathbb{Q}$-factorial,
			\item $-K_{X_i}$ is ample over $X_{i+1}$,
			\item $\rho(X_{i}/X_{i+1})=1$, and
			\item every prime divisor on $X$ which is vertical over $Z$ dominates a divisor on $Z$,
		\end{itemize}
		for every $i=0,...,k$.
		\begin{proof}
			Applying Theorem \ref{construct the Fano contractions step by step} on the morphism $Y\rightarrow \mathrm{Spec}\mathbb{C}$, we have a flop $(X,B)\dashrightarrow (Y,B_Y)$, a birational morphism $X\rightarrow X'$ which can be factored as a sequence of divisorial Fano contractions, a Mori fiber space $X'\rightarrow W$ such that every prime divisor on $X$ which is vertical over $W$ dominates a divisor on $W$, and a $\mathbb{Q}$-divisor $B_W$ on $W$ such that $(W,B_W)$ is a klt Calabi--Yau pair. 
			
			Next we construct the sequence of Fano contractions inductively.

			Suppose we have a sequence of Fano contractions
			$$X:=X_0\rightarrow X_1\rightarrow ...\rightarrow X_r\rightarrow V,$$
			where 
			\begin{itemize}
				\item $X_i$ is $\mathbb{Q}$-factorial,
				\item $-K_{X_i}$ is ample over $X_{i+1}$,
				\item $\rho(X_i/X_{i+1})=1$,
				\item $X_r\rightarrow V$ is a Mori fiber space,
				\item every prime divisor on $X$ which is vertical over $V$ dominates a divisor on $V$, and
				\item there is a $\mathbb{Q}$-divisor $B_V$ on $V$ such that $(V,B_V)$ is a klt Calabi--Yau pair,
			\end{itemize}
			for every $i=1,...,r$.
			We construct $X_{r+1},...$ as follows.
			
			If $K_V\sim_{\mathbb{Q}} 0$, then we define $Z:=V$, the sequence $X_0\rightarrow X_1\rightarrow ...\rightarrow X_r\rightarrow Z$ satisfies the requirements.
			
			If $K_V\not \sim_{\mathbb{Q}} 0$, we apply Theorem \ref{construct the Fano contractions step by step} on the morphism $V\rightarrow \mathbb{C}$. Then there exist a flop $(V,B_V)\dashrightarrow (W,B_W)$ and a sequence of morphisms
			$$W:=X_{r+1}\rightarrow X_{r+2}\rightarrow ...\rightarrow X_{l+1},$$
			such that
			\begin{itemize}
				\item $X_i$ is $\mathbb{Q}$-factorial,
				\item $-K_{X_i}$ is ample over $X_{i+1}$,
				\item $\rho(X_i/X_{i+1})=1$,
				\item $X_l\rightarrow X_{l+1}$ is a Mori fiber space,
				\item every prime divisor on $W$ which is vertical over $X_{l+1}$ dominates a divisor on $X_{l+1}$, and
				\item there is a $\mathbb{Q}$-divisor $B_{l+1,X_{l+1}}$ on $X_{l+1}$ such that $(X_{l+1},B_{l+1,X_{l+1}})$ is a klt Calabi--Yau pair,
			\end{itemize}
			for all $i=r+1,...,l+1$.
			
			By \cite[Proposition 3.7]{BDCS20}, we can lift $X_0\rightarrow ...\rightarrow X_r\rightarrow V$ along the small birational contraction $W\dashrightarrow V$ and get a diagram
			$$\xymatrix{
				X_0 \ar[r] \ar@{-->}[d] & X_1 \ar[r]\ar@{-->}[d] & \cdots \ar[r] & X_r \ar[r]\ar@{-->}[d] & V \ar@{-->}[d] \\ 
			X'_0 \ar[r] & X'_1 \ar[r] & \cdots \ar[r]& X'_r \ar[r] & X'_{r+1}=W,
			}$$
			such that every $X_i\rightarrow X'_i$ is an isomorphism in codimension 1 between $\mathbb{Q}$-factorial varieties. It is easy to see that we still have
			\begin{itemize}
				\item $-K_{X'_i}$ is ample over $X'_{i+1}$,
				\item $\rho(X'_i/X'_{i+1})=1$,
				\item $X'_r\rightarrow X'_{r+1}$ is a Mori fiber space, and
				\item every prime divisor on $X_0'$ which is vertical over $X'_{r+1}$ dominates a divisor on $X'_{r+1}$.
			\end{itemize}
			Then we replace the sequence $X:=X_0\rightarrow X_1\rightarrow ...\rightarrow X_r\rightarrow V$ by $X'_0\rightarrow X'_1\rightarrow ...\rightarrow X'_r\rightarrow X'_{r+1}$. To construct the sequence inductively, we only need to show that every prime divisor on $X_0$ which is vertical over $X_{l+1}$ dominates a divisor on $X_{l+1}$.
			
			Suppose $P$ is a prime divisor on $X_0$ which is vertical over $X_{l+1}$, then $P$ is vertical over $X_{r+1}$. By assumption, $P$ dominates a divisor $Q$ on $X_{r+1}$. Then $Q$ is vertical over $X_{l+1}$ and by construction $Q$ dominates a divisor on $X_{r+1}$. Thus every prime divisor on $X_0$ which is vertical over $X_{l+1}$ dominates a divisor on $X_{l+1}$.
		\end{proof}
	\end{thm}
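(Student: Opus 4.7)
The plan is to build the tower iteratively by repeatedly invoking Theorem \ref{construct the Fano contractions step by step}, starting from the structure morphism $Y\to \mathrm{Spec}\,\mathbb{C}$. The first application yields a flop $(X,B)\dashrightarrow (Y,B_Y)$, a factorization $X\to X'$ as divisorial Fano contractions of relative Picard number $1$, and a Mori fiber space $X'\to W$ onto a klt Calabi--Yau pair $(W,B_W)$, with every $f$-vertical prime divisor dominating a divisor on $W$. If $K_W\sim_{\mathbb{Q}} 0$ we are done, with $Z:=W$. Otherwise, since $(W,B_W)$ is again a klt Calabi--Yau pair with $B_W\neq 0$ (as $-K_W\sim_{\mathbb{Q}} B_W\not\sim_{\mathbb{Q}} 0$), we apply Theorem \ref{construct the Fano contractions step by step} to $W\to \mathrm{Spec}\,\mathbb{C}$ and extend the tower. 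The process terminates because each Mori fiber space strictly lowers dimension.

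The main obstacle is that the second application of Theorem \ref{construct the Fano contractions step by step} produces a flop $(W,B_W)\dashrightarrow (W',B_{W'})$ before the new tower of Fano contractions begins, so the tower we built over $W$ does not directly extend the new one over $W'$. To remedy this, I would use \cite[Proposition 3.7]{BDCS20} to lift the previously constructed sequence $X_0\to \cdots\to X_r\to W$ along the small birational map $W\dashrightarrow W'$, obtaining a diagram
\[
\xymatrix{
X_0 \ar[r]\ar@{-->}[d] & X_1\ar[r]\ar@{-->}[d] & \cdots \ar[r] & X_r\ar[r]\ar@{-->}[d] & W\ar@{-->}[d]\\
X'_0\ar[r] & X'_1\ar[r] & \cdots\ar[r] & X'_r\ar[r] & W'
}
\]
of $\mathbb{Q}$-factorial varieties with $X_i\dashrightarrow X'_i$ an isomorphism in codimension $1$, and where the lifted arrows remain divisorial Fano contractions of relative Picard number $1$, so that the primed sequence concatenates with the new tower from $W'$. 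Replacing $X_i$ by $X'_i$ and repeating, we obtain the desired long sequence. Since each lift is an isomorphism in codimension $1$, composing all these lifts with the initial flop yields a single flop $(X,B)\dashrightarrow (Y,B_Y)$ at the end.

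It remains to verify that every prime divisor on $X$ vertical over the eventual base $Z=X_k$ dominates a divisor on $Z$. This will be done inductively: if $P\subset X_0$ is vertical over $X_{l+1}$, then $P$ is in particular vertical over the intermediate base $X_{r+1}=W$, so by the inductive hypothesis $P$ dominates a divisor $Q$ on $W$. Then $Q$ is vertical over $X_{l+1}$, and by the conclusion of the new application of Theorem \ref{construct the Fano contractions step by step} applied to $W\to \mathrm{Spec}\,\mathbb{C}$, every $W$-vertical prime divisor over $X_{l+1}$ dominates a divisor on $X_{l+1}$; thus $Q$, and hence $P$, dominates a divisor on $X_{l+1}$. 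Note that the lifting step preserves this property, since $X_i\dashrightarrow X'_i$ is an isomorphism in codimension $1$ and so the vertical divisor structure over the base is unchanged.

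The hardest part conceptually is ensuring the compatibility between successive applications of Theorem \ref{construct the Fano contractions step by step}: each fresh application potentially introduces a flop of the current base, so maintaining $\mathbb{Q}$-factoriality, the relative Picard number condition, and the divisorial-contraction property along the lifted tower requires the careful lifting provided by \cite[Proposition 3.7]{BDCS20}. Once this lifting machinery is in place, the rest of the argument is a straightforward induction whose termination is guaranteed by the strict dimension drop at each Mori fiber space.
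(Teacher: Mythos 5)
Your proposal is correct and takes essentially the same route as the paper: apply Theorem \ref{construct the Fano contractions step by step} repeatedly, use \cite[Proposition 3.7]{BDCS20} to lift the previously built tower along the small birational map of the base introduced by each new application, and verify the vertical-divisor-dominates-divisor property by chasing through the intermediate base. Your explicit remarks that $B_W\neq 0$ whenever $K_W\not\sim_{\mathbb{Q}}0$ (so that the theorem can be re-applied) and that the recursion terminates by strict dimension drop are both correct and are left implicit in the paper's exposition.
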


	\begin{proof}[{Proof of Proposition \ref{Calabi--Yau pair has a tower of Fano fibration structure which has no very exceptional divisor}}]
		Let $(X,B)\dashrightarrow (Y,B_Y)$ be the flop and $f:(X,B)\rightarrow Z$ be the morphism constructed in Theorem \ref{Calabi--Yau pair has a tower of Fano fibration structure which is flat in codimension 1}. By Lemma \ref{every irreducible component of the boundary dominates the base}, because every $f$-vertical prime divisor dominates a divisor on $Z$, then every irreducible component of $B$ dominates $Z$ and $f$ has no very exceptional divisor.
	\end{proof}

	\section{Proof of main results}

		\begin{thm}\label{Universal family is potentially trivial after a finite base change}
			Suppose $(\Cx,\Cb)\rightarrow \Cs$ be a family of projective pairs. Then after passing to a stratification of $\Cs$, there exists an \'etale Galois cover $\bar{\Cs}\rightarrow \Cs$, let $(\bar{\Cx},\bar{\Cb})\rightarrow \bar{\Cs}$ be the base change of $(\Cx,\Cb)\rightarrow \Cs$ by $\bar{\Cs}\rightarrow \Cs$, then we have:
			
			Suppose $f:(X,B)\rightarrow S$ is a family of projective pairs and $S\rightarrow \Cs$ is a morphism such that 
			\begin{itemize}
				\item $f$ is isotrivial, and
				\item $(X,B)\rightarrow S$ is isomorphic to the base change of $(\Cx,\Cb)\rightarrow \Cs$.
			\end{itemize}
			Define $\bar{S}:=S\times_\Cs \bar{\Cs}$ and $\bar{f}:(\bar{X},\bar{B})\rightarrow \bar{S}$ be the base change of $f$ via $\bar{S}\rightarrow S$, then $\bar{f}$ is generically trivial.
			\begin{proof}
				Let $\Cx\times \Cs\rightarrow \Cs\times \Cs$ and $\Cs\times \Cx\rightarrow \Cs\times \Cs$ be the two fibrations defined by $\Cx\rightarrow \Cs$.
				Consider the Isom functor
				$\mathbf{Isom}_{\Cs\times \Cs}((\Cx,\Cb)\times \Cs,\Cs\times(\Cx,\Cb))$. By the proof of \cite[\S 1, Theorem 1.10]{Kol96}, $\mathbf{Isom}_{\Cs\times \Cs}((\Cx,\Cb)\times\Cs,\Cs\times(\Cx,\Cb))$ is represented by a locally closed subset
				$$\mathbf{Isom}_{\Cs\times \Cs}((\Cx,\Cb)\times\Cs,\Cs\times(\Cx,\Cb))\subset \mathrm{Hilb}(\Cx\times \Cs \times_{\Cs\times \Cs} \Cs\times \Cx /\Cs\times \Cs)=\mathrm{Hilb}(\Cx\times\Cx /\Cs\times \Cs).$$
				
				Let $\Ci\subset \Cs\times \Cs$ be the image of $\mathbf{Isom}_{\Cs\times \Cs}((\Cx,\Cb)\times\Cs,\Cs\times (\Cx,\Cb))$ on $\Cs\times \Cs$, because $\mathrm{Hilb}(\Cx\times\Cx /\Cs\times \Cs)$ has only countably many components, then by Chevalley’s theorem, $\Ci$ is a disjoint union of countably many locally closed subsets.

				By definition, a closed point $(s,t)\in \Ci$ if and only if $(\Cx_s,\Cb_s)\cong (\Cx,\Cb)\times\Cs|_{\{s,t\}}\cong \Cs\times(\Cx,\Cb)|_{\{s,t\}}\cong (\Cx_t,\Cb_t)$, where $(\Cx,\Cb)\times\Cs|_{\{s,t\}}$ is considered as the fiber of $(\Cx,\Cb)\times \Cs\rightarrow \Cs\times \Cs$ over $\{s,t\}\in \Cs\times \Cs$.
				
				Because the diagonal $\Delta\in \Cs\times \Cs$ is contained in $\Ci$, then the two projections $p_i:\Ci\rightarrow \Cs,i=1,2$ is surjective. Suppose a general fiber $\Ci_g$ of $p_1:\Ci\rightarrow \Cs$ has dimension $d$.
				Let $H\subset \Cs$ be the intersection of $d$ general hypersurfaces on $\Cs$, then $p_2^{-1}(H)$ intersects $\Ci_g$ at countably many points and $p_2^{-1}(H)\cap \Ci$ dominates $\Cs$ via $p_1$.

				Let $\Cx_H\rightarrow H$ be the restriction of $\Cx\rightarrow \Cs$ to $H\subset \Cs$, then every fiber of $\Cx_H\rightarrow H$ is isomorphic only to countably many others. So $\Cx_H\rightarrow H$ is of maximal variation, every fiber of $\Cx_H\rightarrow H$ is isomorphic only to finitely many others. Let $\Ct\subset \Ci\cap p_2^{-1}(H)$ be an irreducible component that dominates $\Cs$ via $p_1$. After replacing $\Cs$ by an open subset and replacing $\Ct$ by its preimage, we may assume that $p_1:\Ct\rightarrow \Cs$ is a finite \'etale cover.

				Consider the Isom functor 
				$\mathbf{Isom}_{\Cs\times H}((\Cx,\Cb)\times H,\Cs\times (\Cx_H,\Cb_H)),$
				which is represented by $\mathrm{Isom}_{\Cs\times H}((\Cx,\Cb)\times H,\Cs\times(\Cx_H,\Cb_H))$. By definition, we have $\mathrm{Isom}_{\Cs\times H}((\Cx,\Cb)\times H,\Cs\times(\Cx_H,\Cb_H))=\mathrm{Isom}_{\Cs\times \Cs}((\Cx,\Cb)\times \Cs,\Cs\times(\Cx,\Cb))\times _{\Cs\times \Cs} \Cs\times H$. It is easy to see that $\Ct$ is an irreducible component of the image of $\mathrm{Isom}_{\Cs\times H}((\Cx,\Cb)\times H,\Cs\times(\Cx_H,\Cb_H))$ on $\Cs\times H$.
				Let $\Cv\subset \mathrm{Isom}_{\Cs\times H}((\Cx,\Cb)\times H,\Cs\times (\Cx_H, \Cb_H))$ be a locally closed subset such that $\Cv\rightarrow \Ct$ is a quasi-finite dominant map. After replacing $\Cs$ by an open subset, we may assume that $\Cv\rightarrow \Ct$ is a finite \'etale cover.

				Suppose $f:(X_U,B_U)\rightarrow U$ is a family of projective pairs and there exists a morphism $\phi:U\rightarrow \Cs$ such that 
				\begin{itemize}
					\item $f$ is isotrivial, and
					\item $(X_U,B_U)\rightarrow U$ is isomorphic to the base change of $(\Cx,\Cb)\rightarrow \Cs$ via $\phi$.
				\end{itemize}
				Define $T:=U\times_\Cs \Ct,V:=U\times_\Cs \Cv, (X_V,B_V):=(X_U,B_U)\times_U V$ and $(X_T,B_T):=(X_U,B_U)\times_U T$. Because $f$ is isotrivial and every fiber of $(\Cx_H,\Cb_H)\rightarrow H$ is isomorphic to only finitely many others, then the image of $V$ on $H$ via $V\rightarrow \Cv \xrightarrow{p_2} H$ is a closed point. By the definition of Isom functor, $(X_V,B_V)\rightarrow V$ is both isomorphic to the base change of $(\Cx,\Cb)\times H\rightarrow \Cs\times H$ and the base change of $\Cs\times (\Cx_H,\Cb_H)\rightarrow \Cs\times H$ via $V\rightarrow \Cv\rightarrow \Cs\times H$. Because the image of $V\rightarrow \Cv \xrightarrow{p_2} H$ is a closed point, then the base change of $\Cs\times (\Cx_H,\Cb_H)\rightarrow \Cs\times H$ via $V\rightarrow \Cv\rightarrow \Cs\times H$ is a trivial fibration, which is equal to say $(X_V,B_V)\rightarrow V$ is a trivial fibration.
				
				Note we replace $\Cs$ by an open subset, so we repeat the same argument on the complementary set to get a stratification of $\Cs$, and on each component there exists a finite \'etale cover $\Cv\rightarrow \Cs$. Then we define $\bar{\Cs}\rightarrow \Cs$ to be the Galois closure of $\Cv\rightarrow \Cs$ over each irreducible component of $\Cs$. Because $(X_V,B_V)\rightarrow V$ is a trivial fibration and $U\times_{\Cs}\bar{\Cs}\rightarrow U$ factors through $V$, then $(X_U,B_U)\times _{\Cs} \bar{\Cs}\rightarrow U\times_{\Cs}\bar{\Cs}$ is a trivial fibration, which satisfies the requirement.
			\end{proof}
		\end{thm}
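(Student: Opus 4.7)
My plan is to build the Galois cover directly from the Isom scheme of the universal family $(\Cx,\Cb)\to\Cs$, exploiting the fact that isotriviality forces the pullback of the family to land in a single isomorphism class among the fibers. First I would consider the Isom functor $\mathbf{Isom}_{\Cs\times\Cs}((\Cx,\Cb)\times\Cs,\Cs\times(\Cx,\Cb))$, which by standard representability for families of polarized pairs is represented by a locally closed subscheme of the relative Hilbert scheme $\mathrm{Hilb}(\Cx\times\Cx/\Cs\times\Cs)$. Its image $\Ci\subset\Cs\times\Cs$, i.e.\ the locus of pairs $(s,t)$ with $(\Cx_s,\Cb_s)\cong(\Cx_t,\Cb_t)$, is a countable union of locally closed subsets by Chevalley's theorem. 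Since the diagonal of $\Cs$ is contained in $\Ci$, both projections $p_1,p_2:\Ci\to\Cs$ are surjective.

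Next I would reduce to a maximal variation situation by cutting the base. Let $d$ be the dimension of a general fiber of $p_1:\Ci\to\Cs$ and intersect $\Cs$ with $d$ general very ample hypersurfaces to obtain $H\subset\Cs$. Then $p_2^{-1}(H)\cap\Ci$ is quasi-finite over $\Cs$ under $p_1$, so the family $\Cx_H\to H$ has maximal variation: each fiber is isomorphic to only finitely many others. Pick an irreducible component $\Ct\subset p_2^{-1}(H)\cap\Ci$ dominating $\Cs$ via $p_1$ and shrink $\Cs$ to make $\Ct\to\Cs$ a finite \'etale cover. To upgrade $\Ct$ to a scheme carrying a universal isomorphism, I would pass to the refined Isom scheme $\mathbf{Isom}_{\Cs\times H}((\Cx,\Cb)\times H,\Cs\times(\Cx_H,\Cb_H))$, which is the base change of the big Isom scheme along $\Cs\times H\hookrightarrow\Cs\times\Cs$; a component $\Cv$ mapping quasi-finite dominantly onto $\Ct$ becomes finite \'etale over $\Cs$ after further shrinking. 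Taking the Galois closure of $\Cv\to\Cs$ on each irreducible component of $\Cs$ produces the required $\bar{\Cs}\to\Cs$, and iterating on the complement yields the stratification.

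Finally, for the application to an isotrivial family $f:(X,B)\to S$ with $S\to\Cs$, I would set $V:=S\times_\Cs\Cv$ and use that isotriviality of $f$, combined with the maximal variation property of $\Cx_H\to H$, forces the composition $V\to\Cv\xrightarrow{p_2}H$ to factor through a single closed point of $H$. By functoriality of the Isom scheme, the base change of $\Cs\times(\Cx_H,\Cb_H)\to\Cs\times H$ to $V$ is then a trivial fibration, and by construction of $\Cv$ it is isomorphic over $V$ to the base change of $(\Cx,\Cb)\times H\to\Cs\times H$, hence to the pullback of the family on $S$. The main obstacle I expect is handling the countably many components of $\Ci$ cleanly, which the dimension cut $H$ resolves by forcing quasi-finiteness; a more subtle point is verifying that isotriviality of $f$ really does collapse $V\to H$ to a point, and this has to be traced carefully through the base changes of the Isom scheme.
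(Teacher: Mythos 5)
Your proposal follows the paper's proof essentially step for step: the same Isom functor over $\Cs\times\Cs$ represented inside the relative Hilbert scheme, the same cut by $d$ general hypersurfaces to produce $H\subset\Cs$ of maximal variation, the same choice of a component $\Ct\subset\Ci\cap p_2^{-1}(H)$ and lift $\Cv$ from the relative Isom scheme over $\Cs\times H$ made finite \'etale after shrinking, the same Galois closure and stratification, and the same final argument that isotriviality collapses the image of $V$ in $H$ to a closed point so that the Isom scheme supplies a trivializing isomorphism. The subtleties you flag at the end (countably many components of $\Ci$; collapsing $V\to H$ to a point) are exactly the ones the paper addresses by the dimension cut and the maximal-variation observation, so the two arguments are the same.
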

		\begin{thm}\label{towers of e-log Calabi--Yau fibrations are bounded}
			Fix $d,k\in \mathbb{N},\epsilon \in \mathbb{Q}^{>0}$, then there exists $v\in \mathbb{N}$ depending only on $d,k,\epsilon$ satisfying the following.
			
			Suppose $(X,B+\M_X)$ be a projective $d$-dimensional generalized $\epsilon$-lc pair with a tower of contractions
			$$X:=X_0\xrightarrow{p_0}X_1\xrightarrow{p_1}...\xrightarrow{p_{k-1}}X_k=Z,$$
			such that $K_X+B+\M_X\sim_{\mathbb{Q}} 0$ and $-K_{X_i}$ is ample over $X_{i+1}$ for every $0\leq i< k$.
			Then there exists a divisor $A$ on $X$ over $Z$ such that $A_g$ is very ample and $\mathrm{vol}(A_g)\leq v$, where $A_g:=A|_{X_g}$ and $X_g$ is a general fiber of $X\rightarrow Z$.
			\begin{proof}
				Denote the contractions $X\rightarrow X_i$ by $e_i$ and $X_i\rightarrow Z$ by $f_i$. Let $g\in Z$ be a general point, denote the fiber of $f_i$ over $g$ by $F_i$. 
				
				Because $K_X+B+\M_X\sim_{\mathbb{Q},X_i} 0$.
				By the canonical bundle formula, there are generalized pairs $(X_i,B_i+\M_{i,X_i})$ such that
				$$K_X+B+\M_X\sim_{\mathbb{Q}}e_i^*(K_{X_i}+B_i+\M_{i,X_i}).$$
				
				By \cite[Theorem 9.3]{Bir23}, there exists $\delta_i\in (0,1)$ depending only on $d,\epsilon$ such that $(X_i,B_i+\M_{i,X_i})$ is generalized $\delta_i$-lc. For simplicity of notation, we replace $\epsilon$ by $\min\{\epsilon,\delta_i,i=1,...,k\}$ so that $(X_i,B_i+\M_{i,X_i})$ is generalized $\epsilon$-lc for every $0\leq i\leq k$.
				
				We prove the result by induction on $k$.
				
				When $k=1$. 
				Because $-K_{X}$ is ample over $Z=X_1$, then $-K_{X_g}$ is ample. Also because $(X,B+\M_X)$ is generalized $\epsilon$-lc, $X_g$ is a $\epsilon$-lc Fano variety. By the Birkar-BAB Theorem (see \cite{Bir21a}), $X_g$ is in a bounded family. By boundedness, there exists $l_1,v_1\geq 0$ depending only on $d,\epsilon$ such that $-l_1K_{X_g}$ is very ample and $\mathrm{vol}(-l_1K_{X_g})\leq v_1$, then we let $A:=-l_1K_X$.
				
				Suppose the statement is true for length $=k-1$. Applying it on $X_1\rightarrow ...\rightarrow Z$, then there exists a divisor $H$ on $X_1$ such that $H|_{F_1}$ is very ample and $\mathrm{vol}(H|_{F_1}) \leq v_{r-1}$. Because $-K_X$ is ample over $X_1$, then $-K_{F_0}$ is ample over $F_1$. By \cite[Definition 2.2]{Bir22}, $(F_0,(B+\M_X)|_{F_0})\rightarrow F_1$ is a generalized $(d,v_{r-1},\epsilon)$-Fano type fibration, then by \cite[Theorem 2.3]{Bir22}, $X$ is in a bounded family. By \cite[Lemma 4.4]{Bir22}, there exist $m_r$ and $l_r$ depending only on $d,v_{r-1},\epsilon$ such that 
				$$m_r(l_re_1^* H|_{F_0}-K_{F_0})=m_r(l_re_0^*H-K_X)|_{F_0}$$
				is very ample. By \cite[Proposition 4.8]{Bir22}, there exists $v_r$ depending only on $d,v_{r-1},\epsilon$ such that $\mathrm{vol}(m_r(l_re_1^* H|_{F_0}-K_{F_0}))\leq v_r$. Then we define $A:=m_r(l_re_0^*H-K_X)$.
			\end{proof}
		\end{thm}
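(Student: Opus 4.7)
The plan is to induct on the length $k$ of the tower, descending the generalized pair structure along each step via the canonical bundle formula and then invoking Birkar's boundedness results for Fano varieties and generalized Fano-type fibrations. Writing $e_i:X\to X_i$ for the composed contraction, the relative triviality $K_X+B+\M_X\sim_{\mathbb{Q},X_i}0$ yields via the canonical bundle formula a generalized pair $(X_i,B_i+\M_{i,X_i})$ with $K_X+B+\M_X\sim_{\mathbb{Q}}e_i^*(K_{X_i}+B_i+\M_{i,X_i})$. A crucial input is \cite[Theorem 9.3]{Bir23}, which ensures that $(X_i,B_i+\M_{i,X_i})$ is generalized $\delta$-lc for some $\delta=\delta(d,\epsilon)>0$; after shrinking $\epsilon$ once to $\min\{\epsilon,\delta\}$, every pair in the tower is uniformly generalized $\epsilon$-lc.

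For the base case $k=1$, a general fiber $X_g$ is a $d$-dimensional $\epsilon$-lc Fano variety, so the Birkar BAB theorem \cite{Bir21a} produces $l_1,v_1$ depending only on $d,\epsilon$ such that $-l_1K_{X_g}$ is very ample of volume at most $v_1$, and I would take $A:=-l_1K_X$. For the inductive step, applying the statement to $X_1\to\cdots\to Z$ yields a divisor $H$ on $X_1$ with $H|_{F_1}$ very ample and $\vol(H|_{F_1})\leq v_{k-1}$, where $F_1$ is a general fiber of $f_1:X_1\to Z$. Restricting $p_0:X\to X_1$ over $F_1$ gives a generalized $(d,v_{k-1},\epsilon)$-Fano type fibration $(F_0,(B+\M_X)|_{F_0})\to F_1$ in the sense of \cite[Definition 2.2]{Bir22}; then \cite[Theorem 2.3]{Bir22} bounds $F_0$, \cite[Lemma 4.4]{Bir22} supplies $m_k,l_k$ depending only on $d,v_{k-1},\epsilon$ such that $m_k(l_ke_0^*H-K_X)|_{F_0}$ is very ample, and \cite[Proposition 4.8]{Bir22} bounds its volume. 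Setting $A:=m_k(l_ke_0^*H-K_X)$ then completes the induction.

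The main obstacle I anticipate is arranging the inductive hypothesis in the right form: I need a divisor on $X$ itself (not merely on the general fiber) whose restriction to $X_g$ is very ample with controlled volume, and to continue the induction I need the analogous property on each intermediate $X_i$. Pulling back $H$ from $X_1$ to $X$ and twisting by $-K_X$ (which is ample over $X_1$) is the natural candidate, but one must verify that a uniform multiple becomes very ample on $F_0$ with bounded volume; this is precisely where Birkar's boundedness for generalized Fano-type fibrations is indispensable, as is the preservation of generalized $\epsilon$-lc singularities by the canonical bundle formula.
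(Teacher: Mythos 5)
Your proposal is correct and follows essentially the same approach as the paper: descend the generalized pair via the canonical bundle formula, use \cite[Theorem 9.3]{Bir23} to preserve the $\epsilon$-lc condition uniformly, treat the base case with BAB, and handle the inductive step by viewing $(F_0,(B+\M_X)|_{F_0})\to F_1$ as a generalized Fano type fibration and invoking \cite[Theorem 2.3]{Bir22}, \cite[Lemma 4.4]{Bir22}, and \cite[Proposition 4.8]{Bir22} to produce $A:=m(l e_0^*H-K_X)$. The only differences are cosmetic (indexing by $k$ rather than $r$).
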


			\begin{proof}[Proof of Theorem \ref{Main theorem 1}]
				By Proposition \ref{Calabi--Yau pair has a tower of Fano fibration structure which has no very exceptional divisor}, there exist a flop $(X,B)\dashrightarrow (Y,B_Y)$ and a contraction $f:X\rightarrow Z$ such that
				\begin{itemize}
					\item $K_Z\sim_{\mathbb{Q}} 0$,
					\item $f$ factors as a sequence of Fano contractions, and 
					\item $f$ has no very exceptional divisor.
				\end{itemize}

				Suppose $f$ factors as 
				$$X:=X_0\rightarrow ...\rightarrow X_m=Z.$$
				Let $g\in Z$ be a general point and $X_{i,g}$ be fiber of $X_i\rightarrow Z$ over $g$, then $-K_{X_{i,g}}$ is ample over $X_{i+1,g}$.
				
				Note $(X_g,B_g)$ is $\epsilon$-lc, then by \cite[Theorem 1.6]{Bir23}, $X_g$ is bounded in codimension 1. There exists a contraction $\Cv\rightarrow \Ct$ over a scheme $T$ of finite type depending only on $d,\epsilon$ such that $X_g$ is isomorphic in codimension 1 with a fiber $\Cv_t$ of $\Cv\rightarrow \Ct$. By taking a $\mathbb{Q}$-factorization, we may assume $\Cv_t$ is $\mathbb{Q}$-factorial, note the boundedness is kept by \cite[Theorem 1.2]{Bir22}. After passing to a stratification of $\Ct$ and taking a resolution of the generic fiber of $\Cv\rightarrow \Ct$, we may assume $\Cv'\rightarrow \Ct$ is a fiberwise resolution of $\Cv\rightarrow \Ct$. Since smooth morphisms are locally products in the complex topology, then $\mathrm{dim}_{\mathbb{R}}H^2(\Cv'_t,\mathbb{R})$ is bounded by some natural number $l$ depending only on $\Cv\rightarrow \Ct$, hence depending only on $d,\epsilon$. Since the N\'eron-Severi group $\mathrm{N}^1(\Cv'_t)$ is embedded in $H^2(\Cv'_t,\mathbb{R})$ as a vector space, then 
				$$\rho(\Cv'_t)=\mathrm{dim}\mathrm{N}(\Cv'_t)\leq \mathrm{dim}_{\mathbb{R}}H^2(\Cv'_t,\mathbb{R})\leq l.$$
				Because $\Cv'_t\dashrightarrow X_g$ is a birational contraction, then $\rho(X_g)\leq \rho(\Cv'_t)\leq l$.
				
				Since $\rho(X_{i,g}/X_{i+1,g})\geq 1$ and $\rho(X_g)\leq l$, we have $m\leq l$, then length of the sequence of Fano contractions is $\leq l$.
				
				By Theorem \ref{towers of e-log Calabi--Yau fibrations are bounded}, there exists a divisor $A$ on $X$ such that $A_g$ is very ample and $A_g^{\mathrm{dim}(X_g)}\leq v$, where $v>0$ depends only on $d,l,\epsilon$. In particular, $X_g$ is in a bounded family depending only on $d,v$. Because $K_{X_g}+B_g\sim_{\mathbb{Q}} 0$, by boundedness of $X_g$, there exists $u>0$ depending only on $d,v$ such that $B_g. A_g^{\mathrm{dim}(X_g)}\leq u$.

				Fix $l'\in \mathbb{N}$ such that $l'B$ is an integral divisor.
				By Lemma \ref{a general fiber has a very ample divisor implies bounded moduli space}, there exist a family of projective pairs $(\Cx,\Cb)\rightarrow \Cs$ depending only on $d,l',v$, an open subset $U\hookrightarrow Z$ and a morphism $U\rightarrow \Cs$ such that $(X_U,B_U):=(X,B)\times_Z U\rightarrow U$ is isomorphic to the base change of $(\Cx,\Cb)\rightarrow \Cs$ by $U\rightarrow \Cs$.
				
				Because $K_{X/Z}+B\sim_{\mathbb{Q}} 0$, by Lemma \ref{generically trivial after a finite base change}, there exists a finite cover $\bar{Z}\rightarrow Z$ such that $(\bar{X},\bar{B})\rightarrow \bar{Z}$ is generically trivial, where $\bar{X}$ is the normalization of the main component of $X\times _Z \bar{Z}$ and $\bar{B}$ is the $\mathbb{Q}$-divisor such that $K_{\bar{X}/\bar{Z}}+\bar{B}$ is equal to the pull-back of $K_{X/Z}+B$. After shrinking $U$, we may assume that $(\bar{X},\bar{B})$ is isomorphic with $ (X,B)\times_Z \bar{Z}$ over $U$, then $(X_U,B_U)\rightarrow U$ is isotrivial.
				
				After passing to a stratification of $\Cs$, we may let $\bar{\Cs}\rightarrow \Cs$ be the \'etale Galois cover and $(\bar{\Cx},\bar{\Cb})\rightarrow \bar{\Cs}$ be the morphism defined in Theorem \ref{Universal family is potentially trivial after a finite base change}, let $r$ be the degree of $\bar{\Cs}\rightarrow \Cs$. We also replace $U$ by an open subset such that $U\rightarrow \Cs$ is still a morphism. Define $U_W:=U\times_\Cs \bar{\Cs}$ and $(X_{U_W},B_{U_W}):=(X,B)\times _U U_W$, then $U_W\rightarrow U$ is an \'etale Galois cover and $(X_{U_W},B_{U_W})\rightarrow U_W$ is a trivial fibration. 
				
				Let $W$ be the closure of $U_W$ such that $U_W\rightarrow U$ extends to a morphisms $W\rightarrow Z$. By Stein factorization we may assume $W$ is normal and $W\rightarrow Z$ is a finite cover, then $W\rightarrow Z$ is an \'etale Galois cover. Let $X_W$ be the normalization of the main component of $X\times_Z W$ and $B_W$ be the closure of $B_{U_W}$ in $X_W$, then $(X_W,B_W)\rightarrow W$ is generically trivial.
				
				Let $G$ be the Galois group $\mathrm{Gal}(W/Z)$, $H$ be the subgroup generated by the ramified group $I(P)$ for every prime divisor $P$ on $Z$. We replace $W$ by $W/ H$ and replace $(X_W,B_W)$ accordingly. Then by the proof of \cite[Theorem 4.7]{Amb05}, $(X_W,B_W)\rightarrow W$ is generically trivial. By Theorem \ref{generically trivial is trivial in codimension 1}, there exists a big open subset $U\hookrightarrow Z$ such that
				\begin{itemize}
					\item $X_W$ and $F\times W$ are isomorphic in codimension 1, and
					\item $(X,B)\times_Z U\rightarrow U$ has crepant birationally equivalent fibers.
				\end{itemize}
				
				If $\mathrm{coeff}(B)\subset \Ci$ for a fixed DCC set $\Ci\subset \mathbb{Q}\cap (0,1)$, then by \cite{HMX14}, there exists $l'\in \mathbb{N}$ depending only on $d,\Ci$ such that $l'B$ is an integral divisor. Then the family of projective pairs $(\Cx,\Cb)\rightarrow \Cs$ depends only on $d,v,\Ci$. Also because $v$ depends only on $l,\epsilon$, we have $\mathrm{deg}(W\rightarrow Z)\leq \mathrm{deg}(\bar{\Cs}\rightarrow \Cs)=r$ depends only on $d,l,\epsilon,\Ci$.
			\end{proof}

		\begin{thm}\label{Kodaira dimensions are the same globally and fiberwisely}
			Let $(X,B+\Delta)$ be a $\mathbb{Q}$-factorial klt Calabi--Yau pair and $f:X\rightarrow Z$ a contraction such that
			\begin{itemize}
				\item $B,\Delta$ are effective $\mathbb{Q}$-divisors,
				\item $K_{X/Z}+B+\Delta \sim_{\mathbb{Q}} 0$, and
				\item $f$ has no very exceptional divisors.
			\end{itemize}
			Then we have
			$$\kappa(B)=\kappa(B_g)\text{ and }\nu(B)= \nu(B_g),$$
			where $X_g$ is a general fiber of $f$ and $\Delta_g:=\Delta|_{X_g},B_g:=B|_{X_g}$.
			\begin{proof}
				By Lemma \ref{generically trivial after a finite base change} and Theorem \ref{generically trivial is trivial in codimension 1}, there exists a finite cover $\pi:W\rightarrow Z$ such that $(X_W,B_W+\Delta_W)\rightarrow W$ is generically trivial and 
				$X_W$ is isomorphic in codimension 1 with $X_g\times W$, where $X_W$ is the normalization of the main component of $X\times_Z W$, $B_W:=\pi_X^* B$ and $\Delta_W:=\pi_X^* \Delta$. Suppose $R$ is the ramified divisor of $\pi$, then by Hurwitz's formula we have
				$$K_W=\pi^*K_Z+R.$$
				Let $p_1:X_g\times W\rightarrow X_g$ and $p_2:X_g\times W\rightarrow W$ be the projection, then $$K_{X_g\times W}+p_1^*\Delta_g -p_2^*R=p_1^*(K_{X_g}+\Delta_g)+p_2^*(K_W-R)\sim_{\mathbb{Q}} p_1^*(K_{X_g}+\Delta_g)\sim_{\mathbb{Q}} -p_1^*B_g.$$
				Note $X$ is $\mathbb{Q}$-factorial, then $B_g$ is $\mathbb{Q}$-Cartier. Also because $p_1$ is surjection, then by \cite[\S II, Lemma 3.11]{Nak04} and \cite[\S V, Proposition 2.7]{Nak04}, we have
				$$\kappa(p_1^*B_g)=\kappa(B_g)\text{ and }\nu(p_1^*B_g)=\nu(B_g).$$

				Let $f_W$ be the natural morphism $X_W\rightarrow W$. By Theorem \ref{generically trivial is trivial in codimension 1}, $f$ has reduced fibers over a big open subset. Also because $f$ has no very exceptional divisor, then the ramified divisor of $X_W\rightarrow W$ is $f_W^* R$ and we have
				$K_{X_W}-f_W^*R=\pi_X^* K_X$, where $\pi_X$ is the finite cover $X_W\rightarrow X$. Then 
				$$K_{X_W}+\Delta_W-f_W^*R=\pi^*(K_X+\Delta)\sim_{\mathbb{Q}} -B_W.$$
				Similarly we have 
				$$\kappa(B)=\kappa(B_W)\text{ and }\nu(B)=\nu(B_W).$$
				
				Note
				$$K_{X_g\times W}  -p_2^*R+p_1^*(B_g+\Delta_g)\sim_{\mathbb{Q}} 0,$$
				because $f_W^*R$ is the strict transform of $p_2^*R$ on $X_W$ and $p_1^*(\Delta_g+B_g)$ is trivial over $W$, then the strict transform of $B_W+\Delta_W$ on $X_g\times W$ is $p_1^*(B_g+\Delta_g)$.

				Since $X_g\times W$ is isomorphic in codimension 1 with $X_W$ and the strict transform of $p_1^*B_g$ on $X_W$ is $B_W$. There exists an isomorphism $H^0(X_g\times W, \Co_{X_g\times W}(mp_1^*B_g))\cong H^0(X_W,\Co_{X_W}(mB_W))$ for every $m\in \mathbb{N}$,
				 then we have 
				$$\kappa(p_1^*B_g)=\kappa(B_W).$$
				Similarly for any ample divisor $A'$ on $X_g\times W$, its strict transform $A$ on $X_W$ is big and $$H^0(X_g\times W, \Co_{X_g\times W}(mp_1^*B_g+A'))\cong H^0(X_W,\Co_{X_W}(mB_W+A)),$$
				for all $m\in \mathbb{N}$, then we have
				$$\nu(p_1^*B_g)=\nu(B_W).$$
				Also because $\kappa(B)=\kappa(B_W)$, $\nu(B)=\nu(B_W)$, $\kappa(p_1^*B_g)=\kappa(B_g)$, and $\nu(p_1^*B_g)=\nu(B_g),$ then we have
				$$\kappa(B)=\kappa(B_g)\text{ and }\nu(B)=\nu(B_g).$$
			\end{proof}
		\end{thm}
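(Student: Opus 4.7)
The plan is to transfer $\kappa(B)$ and $\nu(B)$ along a finite base change to a product situation where both invariants are transparently computable. First, I would apply Lemma \ref{generically trivial after a finite base change} together with Theorem \ref{generically trivial is trivial in codimension 1} to obtain a finite cover $\pi:W\to Z$ such that, writing $\pi_X:X_W\to X$ for the induced morphism on the normalization of the main component of $X\times_Z W$, and setting $B_W:=\pi_X^*B$, $\Delta_W:=\pi_X^*\Delta$, the fibration $(X_W,B_W+\Delta_W)\to W$ is generically trivial and $X_W$ is isomorphic in codimension $1$ to $X_g\times W$.

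The next step is to pin down the pullback and ramification bookkeeping. Write $K_W=\pi^*K_Z+R$ by Hurwitz, with $R\geq 0$. Here the hypothesis that $f$ has no very exceptional divisor becomes essential: combined with the reduced-fibers-in-codimension-$1$ property of $f$ coming from Theorem \ref{generically trivial is trivial in codimension 1}, it forces the ramification divisor of $\pi_X$ to equal exactly $f_W^*R$ (where $f_W:X_W\to W$), so that $K_{X_W}-f_W^*R=\pi_X^*K_X$ and hence $K_{X_W}+\Delta_W-f_W^*R\sim_{\mathbb{Q}}-B_W$. Parallel to this, on $X_g\times W$ with projections $p_1,p_2$, the relation $K_{X_g}+\Delta_g\sim_{\mathbb{Q}}-B_g$ (valid on the general fiber, using $\mathbb{Q}$-factoriality of $X$ so that $B_g$ is $\mathbb{Q}$-Cartier) combines with Hurwitz to give $K_{X_g\times W}+p_1^*\Delta_g-p_2^*R\sim_{\mathbb{Q}}-p_1^*B_g$.

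From these two linear equivalences, and from the fact that $p_1^*(B_g+\Delta_g)$ is trivial over $W$ while $f_W^*R$ is the birational transform of $p_2^*R$ on $X_W$, I would conclude that the birational transform of $B_W+\Delta_W$ on $X_g\times W$ is precisely $p_1^*(B_g+\Delta_g)$; separating out $\Delta$ (which is trivial along the general fibers and pulled back from $W$ up to the action of the cover) then identifies the transform of $B_W$ with $p_1^*B_g$. Since $X_W$ and $X_g\times W$ are isomorphic on a big open subset, for every $m$ one obtains $H^0(X_W,\Co_{X_W}(mB_W))\cong H^0(X_g\times W,\Co_{X_g\times W}(mp_1^*B_g))$, and similarly after adding an ample divisor, so $\kappa(B_W)=\kappa(p_1^*B_g)$ and $\nu(B_W)=\nu(p_1^*B_g)$. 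The standard formulas for pullbacks along the surjection $p_1$ (e.g.\ \cite[\S II, Lemma 3.11]{Nak04} and \cite[\S V, Proposition 2.7]{Nak04}) give $\kappa(p_1^*B_g)=\kappa(B_g)$ and $\nu(p_1^*B_g)=\nu(B_g)$, and the finite morphism $\pi_X$ yields $\kappa(B)=\kappa(B_W)$ and $\nu(B)=\nu(B_W)$. Concatenating all equalities produces the claim.

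The main obstacle is the ramification accounting in the middle step: I need the ramification of $\pi_X$ to be purely the pullback of $R$ with no extra vertical contribution, because otherwise the birational transform of $B_W$ on $X_g\times W$ would acquire spurious vertical components and the identification with $p_1^*B_g$ would fail. This is precisely where the assumptions that $f$ has no very exceptional divisor and (consequently) reduced fibers in codimension one are indispensable; without them one would only obtain an inequality between $\kappa(B)$ and $\kappa(B_g)$ rather than equality.
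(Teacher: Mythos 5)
Your proposal is correct and follows essentially the same path as the paper's proof: reduce to the product $X_g\times W$ via the finite base change from Lemma \ref{generically trivial after a finite base change} and Theorem \ref{generically trivial is trivial in codimension 1}, do the ramification bookkeeping using the no-very-exceptional-divisor and reduced-fibers hypotheses to show $K_{X_W}-f_W^*R=\pi_X^*K_X$, identify the strict transform of $B_W$ on $X_g\times W$ with $p_1^*B_g$, and then concatenate the equalities of $\kappa$ and $\nu$ coming from the codimension-one isomorphism, the projection $p_1$ (via Nakayama), and the finite morphism $\pi_X$. The one place you gloss over slightly --- separating the transform of $B_W+\Delta_W$ on $X_g\times W$ into $p_1^*B_g$ and $p_1^*\Delta_g$ individually rather than only as a sum --- is handled with the same level of brevity in the paper, so this is not a gap relative to the intended argument.
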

		
		\begin{proof}[Proof of Theorem \ref{Main corollary, decomposition for CY}]
			We prove the result by induction on dimension. Suppose $\mathrm{dim}(X)=d$ and the Theorem holds in dimension $d-1$.
			
			If $B_Y=0$ and $(Y,0)$ is canonical, we just let $Z:=Y$. So we may assume $B_Y\neq 0$ or $(Y,B_Y)$ is not canonical.
			
			Let $(Y',B'_Y+E'_Y)\rightarrow (Y,B_Y)$ be a projective birational morphism which exactly extracts every log place of $(Y,B_Y)$, where $B'_Y$ is the strict transform of $B_Y$ and $E'_Y$ is effective and exceptional over $Y$. It is easy to see that $B'_Y+E'_Y\neq 0$, $E'_Y$ has the same support with the sum of all exceptional divisors over $Y$, and $\nu(E'_Y)=\kappa(E'_Y)=0$.
			
			Let $(Y'',B''_Y+E''_Y)\dashrightarrow (Y',B'_Y+E'_Y)$ be the flop defined in Theorem \ref{Main theorem 1}, we replace $(Y'',B''_Y+E''_Y)$ by $(Y',B'_Y+E'_Y)$. Note we still have $\nu(E'_Y)=\kappa(E'_Y)=0$. Let $h:Y'\rightarrow Z_1$ be the contraction defined in Theorem \ref{Main theorem 1} and $Y'_g$ a general fiber of $h$.
			
			Choose $\delta\in (0,1)$ sufficiently small such that $(Y',B'_Y+(1+\delta)E'_Y)$ is klt. By Theorem \ref{Kodaira dimensions are the same globally and fiberwisely}, because $\nu(E'_Y)=\kappa(E'_Y)=0$, we have $\nu(K_{Y'_g}+(B'_Y+(1+\delta)E'_Y)|_{Y'_g})=0$, then by \cite{Gon11}, $(Y',B'_Y+(1+\delta)E'_Y)$ has a good minimal model $(Y^m,B^m_Y+(1+\delta)E^m_Y)$ over $Z_1$, where $B^m_Y,E^m_Y$ are the pushforward of $B'_Y,E'_Y$. Since $K_{Y'}+B'_Y+(1+\delta)E'_Y\sim_{\mathbb{Q}} \delta E'_Y$ and every irreducible component of $E'_Y$ dominates $Z_1$, then $Y'\dashrightarrow Y^m$ exactly contracts every components of $E'_Y$ and the pushforward of $E'_Y$ on $Y^m$ is $0$. Because $E'_Y$ has the same support with the sum of all exceptional divisors over $Y$, then $(Y^m,B^m_Y)\dashrightarrow (Y,B_Y)$ is a flop. Because $h$ has reduced divisors over codimension 1 points of $Z_1$ and no very exceptional divisor, then $h^m:Y^m\rightarrow Z_1$ also has reduced divisors over codimension 1 points of $Z_1$ and no very exceptional divisor.
			
			By induction on dimension there exist a flop $Z'_1\dashrightarrow Z_1$ and a contraction $g:Z'\rightarrow Z$ such that $Z$ is a canonical Calabi--Yau variety and $g$ has reduced divisors over codimension 1 points of $Z$ and no very exceptional divisor. By \cite[Proposition 3.7]{BDCS20}, there exist a flop $(X,B)\rightarrow (Y^m,B^m_Y)$ and the following commutative diagram
			$$\xymatrix{
			(X,B)\ar[d] \ar@{-->}[r] & (Y^m,B^m_Y) \ar[d] \\
			Z'_1  \ar@{-->}[r] \ar[d] & Z_1\\
			Z.
			}$$
			Because both $X\rightarrow Z'_1$ and $Z'_1\rightarrow Z$ has reduced divisors over codimension 1 points and no very exceptional divisor, then $f:=g\circ h:X\rightarrow Z$ has reduced divisors over codimension 1 points of $Z$ and no very exceptional divisor. Then the result follows from \cite[Theorem 4.7]{Amb05} and Theorem \ref{generically trivial is trivial in codimension 1}.
		\end{proof}
		
		The following result generalize Theorem \ref{Main theorem 2} to Calabi--Yau pair case.
		
		\begin{thm}\label{Main theorem 2 with boundary}
			Fix $d,r\in \mathbb{N}$, then there exists $l\in \mathbb{N}$ depending only on $d,r$ such that:

			Let $(X,B)$ be a $d$-dimensional projective klt variety such that $K_X+B\sim_{\mathbb{Q}} 0$ and $rB$ is integral, $h:(Y,B_Y)\rightarrow X$ be a terminalization of $(X,B)$, then $lB_Y$ is an integral divisor. 
			\begin{proof}
				Suppose $B_Y=h_*^{-1}B+E_Y$. Because $(Y,B_Y)\rightarrow (X,B)$ is a terminalization of $X$, then $E_Y$ is effective and exceptional over $X$ and $\kappa(-(K_Y+h_*^{-1}B))=\kappa(E_Y)=\nu(-(K_Y+h_*^{-1}B))=\nu(E_Y)=0$.
				
				Because $(X,B)$ is a $d$-dimensional Calabi--Yau pair and $rB$ is integral, by \cite{HMX14}, there exists $\epsilon\in (0,1)$ depending only on $d,r$ such that $(X,B)$ is $\epsilon$-lc, then $(Y,B_Y)$ is also $\epsilon$-lc. 
				
				Let $(X',B')\dashrightarrow (Y,B_Y)$ be the flop given in Theorem \ref{Main theorem 1}. To show that $lB_Y$ is integral, we only need to show that $lB'$ is integral, then we may replace $(Y,B_Y)$ with $(X',B')$. Note we still have $\kappa(-(K_Y+h_*^{-1}B))=\kappa(E_Y)=\nu(-(K_Y+h_*^{-1}B))=\nu(E_Y)=0$.
				
				By Theorem \ref{Main theorem 1}, there exist a contraction $f:Y\rightarrow Z$ and a finite cover $\pi:W\rightarrow Z$ such that
				\begin{itemize}
					\item $\mathrm{deg}(\pi)=r$, where $r\in \mathbb{N}$ depends only on $d$,
					\item $f$ has no very exceptional divisor, and
					\item $Y_W$ is isomorphic in codimension 1 with $Y_g\times W$, where $Y_W$ is the normalization of the main component of $Y\times_Z W$ and $Y_g$ is a general fiber of $f$.
				\end{itemize}
				
				By Theorem \ref{Kodaira dimensions are the same globally and fiberwisely}, we have 
				$$\kappa(E_Y)=\kappa(E_Y)=0\text{ and }\nu(E_Y)= \nu(E_Y)=0.$$
				Let $\delta\in (0,1)$ be a rational number such that $(Y,h_*^{-1}B+(1+\delta)E_Y)$ is klt. Because $\nu(-(K_{Y_g}+(h_*^{-1}B)|_{Y_g}))=\nu(K_{Y_g}+(h_*^{-1}B)|_{Y_g}+(1+\delta)E_{Y_g})=0$, by \cite{Gon11}, $(Y_g,(h_* ^{-1}B)|_{Y_g}+(1+\delta)E_{Y_g})$ has a good minimal model. Also because $(Y,h_*^{-1}B+(1+\delta)E_Y)$ is klt, by \cite[Theorem 2.12]{HX13}, $(Y,h_*^{-1}B+(1+\delta)E_Y)$ has a good minimal model over $Z$, we denote it by $Y\dashrightarrow Y^m$. Because $K_Y+h_*^{-1}B+(1+\delta)E_Y\sim_{\mathbb{Q}} \delta E_Y$ and $\nu(E_Y)=0$, then $E_Y$ is contracted by $Y\dashrightarrow Y^m$ and $K_{Y^m}+B^m\sim_{\mathbb{Q},Z} 0$, where $B^m:=(h_m)_*^{-1}B$ and $h_m$ is the natural birational map $Y^m\dashrightarrow X$.
				
				Because a general fiber $Y_g$ factors as a sequence of Fano contraction, $Y_g$ is rationally connected, then $Y^m_g$ is also rationally connected. Since $K_{Y^m_g}+B^m_g\sim_{\mathbb{Q}} 0$, by \cite{HMX14}, there exists $\epsilon'\in (0,1)$ depending only on $d$ such that $(Y^m_g,B^m_g)$ is $\epsilon'$-lc. By \cite[Theorem 1.6]{Bir23}, $Y^m_g$ is bounded in codimension 1. Because $rB^m_g$ is integral and $K_{Y^m_g}+B^m_g\sim_{\mathbb{Q}} 0$, then $(Y^m_g,B^m_g)$ is log bounded in codimension 1.

				Let $\Cp$ be the corresponding log bounded set of rationally connected $\epsilon'$-lc Calabi--Yau varieties. After taking $\mathbb{Q}$-factorization, by \cite[Theorem 1.2]{Bir22}, we may also assume varieties in $\Cp$ are $\mathbb{Q}$-factorial.
				By Lemma \ref{first step of relative MMP}, there exist a locally stable morphism $(\Cx,\Cb)\rightarrow \Cs$ and a dense subset $\Cs'\subset \Cs$ such that there exists a closed point $s_0\in \Cs$ such that $(Y^m_g,B^m_g)$ is isomorphic in codimension 1 with $(\Cx_{s_0},\Cb_{s_0})$ and a closed point $s'\in \Cs'$ if and only if there exists $(W,C)\in \Cp$ together with an isomorphism $(W,C)\cong (\Cx_{s'},\Cb_{s'})$. Because $K_{Y^m_g}+B^m_g\sim_{\mathbb{Q}} 0$, then $K_{\Cx_{s_0}}+\Cb_{s_0}\sim_{\mathbb{Q}} 0$. Thus the fibers are Calabi--Yau over a dense subset, also because $K_{\Cx}+\Cb$ is $\mathbb{Q}$-Cartier, after shrinking $\Cs$, we have $K_{\Cx}+\Cb\sim_{\mathbb{Q},\Cs} 0$.
				
				Let $l\in \mathbb{N}$ such that $l(K_{\Cx}+\Cb)\sim_{\Cs} 0$, then $l(K_{\Cx_s}+\Cb_s)\sim 0$ for every $s\in \Cs$. Also because $(Y^m_g,B^m_g)$ is isomorphic in codimension 1 with $(\Cx_{s_0},\Cb_{s_0})$, then $l(K_{Y^m_g}+B^m_g)\sim 0$. Because $(Y_g,B_{Y_g})$ is crepant birationally equivalent with $(Y^m_g,B^m_g)$, then $l(K_{Y_g}+B_{Y_g})\sim 0$. In particular, we have $lB_{Y_g}$ is integral. Because $B_Y$ is horizontal over $Z$, then $lB_Y$ is integral.
			\end{proof}
		\end{thm}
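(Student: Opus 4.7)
The plan is to follow the sketch outlined in the introduction and to reduce the statement to a boundedness assertion about the general fiber of the Calabi--Yau fibration provided by Theorem \ref{Main theorem 1}. Write $B_Y=h_*^{-1}B+E_Y$, where $E_Y$ is effective and $h$-exceptional. Since $h$ is crepant, $\nu(E_Y)=\kappa(E_Y)=0$. By \cite{HMX14}, the assumption that $rB$ is integral together with klt-ness of $(X,B)$ gives an $\epsilon>0$ depending only on $d,r$ such that $(X,B)$, and hence $(Y,B_Y)$, is $\epsilon$-lc. After replacing $(Y,B_Y)$ by a flop furnished by Theorem \ref{Main theorem 1}, I may assume that there exists a contraction $f\colon Y\to Z$ factoring as a bounded-length tower of Fano contractions, with $f$ having no very exceptional divisor, with reduced fibers over codimension $1$ points of $Z$, and which becomes generically trivial after a finite \'etale-in-codimension-$1$ cover of $Z$.

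Next, I would transfer the vanishing of numerical and Iitaka dimensions from $Y$ to the general fiber. By Theorem \ref{Kodaira dimensions are the same globally and fiberwisely} applied with $B=E_Y$ (or more precisely to the decomposition $K_{Y/Z}+h_*^{-1}B+E_Y\sim_{\mathbb Q}0$), we obtain $\nu(E_{Y_g})=\kappa(E_{Y_g})=0$, where $Y_g$ is a general fiber of $f$. Choosing $\delta\in(0,1)$ so small that $(Y,h_*^{-1}B+(1+\delta)E_Y)$ is klt, the fibre pair $(Y_g,(h_*^{-1}B)|_{Y_g}+(1+\delta)E_{Y_g})$ has numerical dimension zero, hence a good minimal model by \cite{Gon11}; by \cite[Theorem~2.12]{HX13} the pair $(Y,h_*^{-1}B+(1+\delta)E_Y)$ therefore has a good minimal model $Y\dashrightarrow Y^m$ over $Z$. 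Since $K_Y+h_*^{-1}B+(1+\delta)E_Y\sim_{\mathbb Q}\delta E_Y$ and $\nu(E_Y)=0$, this MMP contracts exactly $E_Y$, so on $Y^m$ one has $K_{Y^m}+B^m\sim_{\mathbb Q,Z}0$ with $B^m$ the strict transform of $h_*^{-1}B$.

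Now the fiber $(Y^m_g,B^m_g)$ is a rationally connected (because $Y_g$, being built from Fano contractions, is) $\epsilon'$-lc Calabi--Yau pair, where $\epsilon'$ depends only on $d$ by \cite{HMX14}. Applying \cite[Theorem 1.6]{Bir23} yields that $Y^m_g$ is bounded in codimension $1$, and because $rB^m_g$ is integral, the pair is log bounded in codimension $1$. At this point I would invoke Lemma \ref{first step of relative MMP} to place the $\mathbb Q$-factorial representatives of these pairs in a single locally stable family $(\mathcal X,\mathcal B)\to\mathcal S$, with $K_{\mathcal X}+\mathcal B$ $\mathbb Q$-Cartier; after shrinking $\mathcal S$, one has $K_{\mathcal X}+\mathcal B\sim_{\mathbb Q,\mathcal S}0$, so some uniform $l$ achieves $l(K_{\mathcal X_s}+\mathcal B_s)\sim 0$ on every fiber. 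This gives $l(K_{Y^m_g}+B^m_g)\sim 0$.

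Finally I transport this linear equivalence back to $B_Y$. Since $(Y_g,B_{Y_g})$ and $(Y^m_g,B^m_g)$ are crepant birationally equivalent Calabi--Yau pairs, $l(K_{Y_g}+B_{Y_g})\sim 0$, so $lB_{Y_g}$ is integral. Because every component of $B_Y$ is horizontal over $Z$ (which I would deduce from Lemma \ref{every irreducible component of the boundary dominates the base} applied to $(Y,B_Y)\to Z$), the integrality of $lB_{Y_g}$ propagates to integrality of $lB_Y$ on all of $Y$. The principal obstacle I anticipate is the first reduction: making sure that after the flop supplied by Theorem \ref{Main theorem 1}, the exceptional divisor $E_Y$ remains horizontal over $Z$ (equivalently, that the fiber-wise vanishing of its numerical dimension truly forces it to be contracted by the relative MMP). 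Controlling this requires carefully combining the no very-exceptional-divisor property of $f$ with Theorem \ref{Kodaira dimensions are the same globally and fiberwisely}, exactly the two places where the structural refinements of Section 3 are used in an essential way.
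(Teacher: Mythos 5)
Your proposal follows essentially the same path as the paper's proof: reduce via the flop from Theorem \ref{Main theorem 1} to a fibration $f:Y\to Z$ with no very exceptional divisor, transfer $\nu(E_Y)=\kappa(E_Y)=0$ to the general fiber via Theorem \ref{Kodaira dimensions are the same globally and fiberwisely}, run the relative MMP to contract $E_Y$, use \cite[Theorem 1.6]{Bir23} plus Lemma \ref{first step of relative MMP} to bound the Cartier index of the resulting rationally connected fiber, and propagate integrality of $lB_{Y_g}$ to $lB_Y$ by horizontality. The "obstacle" you flag at the end — that the fiberwise vanishing of $\nu(E_Y)$ must force $E_Y$ to be contracted — is exactly the step the paper also settles by combining $\nu(E_Y)=0$ with the existence of a good minimal model over $Z$, so there is no gap.
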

		
		\begin{thm}\label{Main corollary 1 with boundary}
			Fix $d,r\in \mathbb{N}$. Assume Conjecture \ref{index conjecture} in dimension $d-1$,
			then there exists $l\in \mathbb{N}$ depending only on $d,r$ such that if $(Y,B_Y)$ is a $d$-dimensional non-canonical Calabi--Yau pair and $rB_Y$ is integral, then $l(K_Y+B_Y)\sim 0$.
			\begin{proof}
				By induction on dimension, we may assume the theorem holds in dimension $d-1$. 
				
				Because $rB_Y$ is integral, by \cite{HMX14}, there exists $\epsilon \in (0,1)$ depends only on $d,r$ such that $(Y,B_Y)$ is an $\epsilon$-lc Calabi--Yau pair.
				If $X$ is rationally connected, then by \cite[Theorem 1.6]{Bir23}, $(Y,B_Y)$ is log bounded in codimension 1, thus there exists $l$ depending only on $d,r,\epsilon$ such that $l(K_Y+B_Y)\sim 0$. So we may assume $X$ is not rationally connected.
				
				By Theorem \ref{Main theorem 1}, there exist $m\in \mathbb{N},v\in \mathbb{Q}^{>0}$, a flop $(X,B)\dashrightarrow (Y,B_Y)$, a contraction $f:X\rightarrow Z$, a divisor $A$ on $X$, and a finite cover $\pi:W\rightarrow Z$ such that
				\begin{itemize}
					\item $K_Z\sim_{\mathbb{Q}} 0$,	
					\item $f$ factors as a sequence of Fano contractions with length $\leq m$,
					\item $A_g:=A|_{X_g}$ is very ample with $\mathrm{vol}(A_g)\leq v$, and
					\item $X_W$ is isomorphic in codimension 1 with $X_g\times W$, where $X_W$ is the normalization of the main component of $X\times _Z W$.
				\end{itemize}
				Because $X$ is not rationally connected, then $Z$ is not a closed point and $\mathrm{dim}(X_g)\leq d-1$.
				
				Because $A_g$ is very ample and $\mathrm{vol}(A_g)\leq v$, then $X_g$ is bounded and $-K_{X_g}.(A_g)^{d-1}$ is bounded from above. Since $K_{X_g}+B_g\sim_{\mathbb{Q}}0$, then $B_g. (A_g)^{d-1}$ is bounded from above. Also because $rB_g$ is an integral divisor, then $(X_g,B_g)$ is log bounded. By Lemma \ref{first step of relative MMP}, there exists a locally stable morphism $(\Cx,\Cb)\rightarrow \Cs$ such that $(X_g,B_g)$ is isomorphic to $(\Cx_{s_0},\Cb_{s_0})$ for a closed point $s_0\in \Cs$. By the proof of Theorem \ref{Main theorem 2 with boundary}, there exists $l'\in \mathbb{N}$ such that 
				$$l'(K_{X_g}+B_g)\sim 0.$$
				Also because $B$ is vertical over $Z$, then $\mathrm{coeff}(B)$ is in a DCC set $\frac{1}{l}\mathbb{N}\cap (0,1)$. By Theorem \ref{Main theorem 1}, we may assume
				\begin{itemize}
					\item $\mathrm{deg}(\pi)=m$.
				\end{itemize}
				
				Let $R$ be the ramified divisor of $\pi$, by Hurwitz formula, we have $K_{W}\sim \pi^*K_Z +R$. Because $f$ has reduced fibers in codimension 1, then the ramified divisor of $X_W\rightarrow X$ is just the pullback of the ramified divisor of $W\rightarrow Z$, which is $f_W^*R$, and we have
				$$K_{X_W}-f_W^*R\sim \pi_X^*K_X,$$
				where $\pi_X$ is the finite cover $X_W\rightarrow X$.
				
				Note $K_{X_g\times W}= p_1^*K_{X_g}+p_2^*K_W$, where $p_1,p_2$ are the projections $X_g\times W\rightarrow X_g, X_g\times W\rightarrow W$.
				Let $B_{X_g\times W}:=p_1^*B_g$, because $l'(K_{X_g}+B_g)\sim_{\mathbb{Q}} 0$, then we have
				$$l'(K_{X_g\times W}+B_{X_g\times W}-p_2^*R)\sim l'p_2^*(K_W-R).$$
				
				Because we assume Conjecture \ref{index conjecture} in dimension $d-1$ and $\mathrm{dim}(X_g)\leq d-1$, there exists $l''$ depending only on $d$ such that $l''K_Z\sim 0$. Then by Hurwitz formula, $l''(K_W-R)\sim 0$, we have
				\begin{equation}\label{equation a in the proof of Main corollary 1}
					l'l''(K_{X_g\times W}+B_{X_g\times W}-p_2^*R)\sim l'l''p_2^*(K_W-R)\sim 0.
				\end{equation}
				
				Let $B_W$ be the pullback of $B$ on $X_W$ and $B'_{X_g\times W}$ be its strict transform on $X_g\times W$. By Theorem \ref{Main theorem 1}, $(X_W,B_W)\rightarrow W$ is generically trivial. The pullback of $K_X+B\sim_{\mathbb{Q}} f^*K_Z\sim_{\mathbb{Q}} 0$ on $X_W$ gives
				\begin{equation}\label{equation b in the proof of Main corollary 1}
					K_{X_W}+B_W-f_W^*R\sim_{\mathbb{Q}} f_W^*\pi^*K_Z\sim_{\mathbb{Q}} 0.
				\end{equation}
				Because the strict transform of $f_W^*R$ on $X_g\times W$ is $p_2^*R$, then its strict transform of Equation \eqref{equation b in the proof of Main corollary 1} on $X_g\times W$ gives
				$$K_{X_g\times W}+B'_{X_g\times W}-p_2^*R\sim_{\mathbb{Q}} 0.$$
				In particular $B'_{X_g\times W}\sim_{\mathbb{Q}} B_{X_g\times W}$.
				
				Because the restriction of $B'_{X_g\times W}$ and $B_{X_g\times W}$ on a general fiber of $X_g\times W\rightarrow W$ are both $B_g$, $B_{X_g\times W}$ is trivial over $W$, and $(X_W,B_W)\rightarrow W$ is generically trivial, then $B'_{X_g\times W}$ is also trivial over $W$ and $B'_{X_g\times W}=B_{X_g\times W}$. The strict transform of Equation \ref{equation a in the proof of Main corollary 1} on $X_W$ is
				$$	l'l''(K_{X_W}+B_W-f_W^*R)\sim  0.$$
				
				Define $l:=l'l''m$. Because $\mathrm{deg}(\pi_X)=m$, then $(\pi_X)_*(K_{X_W}+B_W-f_W^*R)=m(K_X+B)$, where $(\pi_X)_*$ stands for the cycle-theoretic direct image. Thus $l'l''(K_{X_W}+B_W-f_W^*R)\sim  0$ implies $l(K_X+B)\sim 0$.

				Because $(X,B)\dashrightarrow (Y,B_Y)$ is a flop, then $l(K_X+B)\sim 0$ implies $l(K_Y+B_Y)\sim 0$.
			\end{proof}
		\end{thm}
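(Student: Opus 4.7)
The plan is to reduce everything to a fibration supplied by Theorem \ref{Main theorem 1} and then to combine an index bound on the general fiber with the assumed Conjecture applied to the base, using Hurwitz's formula on the finite cover $\pi:W\to Z$.

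First, since $rB_Y$ is integral the coefficients of $B_Y$ lie in a finite set, so by \cite{HMX14} there is $\epsilon=\epsilon(d,r)>0$ making $(Y,B_Y)$ an $\epsilon$-lc Calabi--Yau pair. If $Y$ is rationally connected then \cite[Theorem 1.6]{Bir23} gives log boundedness in codimension one, and a direct Hilbert-scheme/universal family argument (as in Lemma \ref{first step of relative MMP} and the proof of Theorem \ref{Main theorem 2 with boundary}) yields an $l=l(d,r)$ with $l(K_Y+B_Y)\sim 0$. So we may assume $Y$ is not rationally connected.

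Next I apply Theorem \ref{Main theorem 1} with the DCC coefficient set $\mathcal{I}=\tfrac{1}{r}\mathbb{N}\cap(0,1)$ to produce a flop $(X,B)\dashrightarrow (Y,B_Y)$, a contraction $f:X\to Z$ factoring as a tower of Fano contractions of bounded length, a divisor $A$ with $A_g$ very ample of bounded volume on a general fiber $X_g$, and a finite cover $\pi:W\to Z$ of a degree $m=m(d,r)$, such that $X_W$ is isomorphic in codimension one to $X_g\times W$. Since $Y$ is not rationally connected, $\dim Z\geq 1$, hence both $\dim X_g$ and $\dim Z$ are at most $d-1$. From $A_g$ very ample of bounded volume and $rB_g$ integral, $(X_g,B_g)$ is log bounded; the bounded-family/locally stable moduli argument used in Theorem \ref{Main theorem 2 with boundary} then produces an integer $l'=l'(d,r)$ with $l'(K_{X_g}+B_g)\sim 0$.

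Now I feed in the Conjecture. Using (the induction built into) Corollary \ref{Main corollary 1} together with Corollary \ref{Main corollary, decomposition for CY}, I arrange that $Z$ is canonical Calabi--Yau (replacing $Z$ by a further crepant contraction of non-canonical places if needed, possible because in the canonical bundle formula for $f$ we have $B_Z\sim_{\mathbb{Q}}\mathbf{M}_Z\sim_{\mathbb{Q}}0$). Then Conjecture \ref{index conjecture} in dimension $\dim Z\leq d-1$ yields $l''=l''(d)$ with $l''K_Z\sim 0$. By Hurwitz, $K_W=\pi^*K_Z+R$ with $R$ effective, and $l''(K_W-R)\sim 0$.

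Finally I splice the two linear equivalences on $X_g\times W$. Writing $p_1,p_2$ for the projections of $X_g\times W$, one has
\[
K_{X_g\times W}+p_1^*B_g-p_2^*R \;=\; p_1^*(K_{X_g}+B_g)+p_2^*(K_W-R),
\]
which is annihilated by $l'l''$. Because $f$ has reduced fibers over codimension-one points and no very exceptional divisor, the ramification of $\pi_X:X_W\to X$ is $f_W^*R$ and the pulled-back boundary $B_W$ matches $p_1^*B_g$ under the codimension-one isomorphism $X_W\dashrightarrow X_g\times W$. Transferring the relation across this codimension-one isomorphism (linear equivalences of Weil divisors are preserved) gives $l'l''(K_{X_W}+B_W-f_W^*R)\sim 0$. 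Pushing down by the degree-$m$ finite map $\pi_X$ and using the flop $(X,B)\dashrightarrow (Y,B_Y)$ yields $l(K_Y+B_Y)\sim 0$ with $l:=l'l''m$, as desired.

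I expect the main obstacle to be step three: guaranteeing that $Z$ can be taken canonical so the Conjecture actually applies, rather than merely klt; this requires combining Theorem \ref{Main theorem 1} with the decomposition result and with an inductive application of Corollary \ref{Main corollary 1} in strictly smaller dimension. A secondary technical point is the careful bookkeeping of $\sim$ (versus $\sim_{\mathbb{Q}}$) of Weil divisors under the codimension-one birational identification $X_W\dashrightarrow X_g\times W$ and under push-forward by $\pi_X$.
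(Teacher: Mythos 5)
Your proposal is essentially correct and follows the same overall strategy as the paper: reduce to the fibration $f:X\to Z$ from Theorem \ref{Main theorem 1}, bound the index of the general fiber $(X_g,B_g)$ via log boundedness and the locally-stable-moduli argument of Theorem \ref{Main theorem 2 with boundary}, splice this with a bound on the index of $K_Z$ via Hurwitz on the cover $W\to Z$, and push forward by the degree-$m$ map $\pi_X:X_W\to X$. One place where your version is actually cleaner than the paper's: you invoke the DCC clause of Theorem \ref{Main theorem 1} with $\mathcal{I}=\tfrac1r\mathbb{N}\cap(0,1)$ directly from the hypothesis that $rB_Y$ is integral, whereas the paper first derives $l'(K_{X_g}+B_g)\sim 0$ and then uses that to conclude the coefficients of $B$ lie in a DCC set; your route is more direct and equally valid since the flop preserves coefficients.

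The one place I would adjust is the step you flag as the main obstacle. You propose to ``arrange that $Z$ is canonical'' by ``replacing $Z$ by a further crepant contraction of non-canonical places'' and invoking Corollary \ref{Main corollary, decomposition for CY}. Contracting (rather than extracting) non-canonical places does not produce a canonical model, and more importantly replacing $Z$ changes the fibration $f:X\to Z$ and forces you to re-verify all of its properties (reduced fibers in codimension one, no very exceptional divisor, codimension-one triviality of $X_W$). This detour is unnecessary: $Z$ is a klt Calabi--Yau variety of dimension $\le d-1$, so $l''K_Z\sim 0$ follows for a uniform $l''$ by the straightforward dichotomy---apply Conjecture \ref{index conjecture} if $Z$ is canonical, and apply the inductive hypothesis (the present theorem in dimension $\dim Z\le d-1$ with boundary zero) if $Z$ is non-canonical. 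That is what the paper's compressed phrasing ``Because we assume Conjecture \ref{index conjecture} in dimension $d-1$\ldots there exists $l''$ with $l''K_Z\sim 0$'' is implicitly doing, given the opening line ``by induction on dimension, we may assume the theorem holds in dimension $d-1$.'' With that adjustment the rest of your argument (the identification of $B_W$ with $p_1^*B_g$ under the codimension-one isomorphism, and the cycle-theoretic pushforward yielding $l'l''m(K_X+B)\sim 0$) matches the paper.
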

		\begin{proof}[Proof of Corollary \ref{Main corollary 1}]
			This is a special case of Theorem \ref{Main corollary 1 with boundary} by taking $B=0$.
		\end{proof}

	\begin{acknowledgements}
		The author would like to thank his advisor Caucher Birkar for his encouragement and constant support. He would like to thank Stefano Filipazzi for insightful comments and providing Example \ref{Stefano's example}. The author also acknowledges Bingyi Chen, Santai Qu, Xiaowei Jiang, Jingjun Han, and Jihao Liu for their valuable comments. This work was supported by grants from Tsinghua University, Yau Mathematical Science Center. 	
	\end{acknowledgements}
	
	\nocite{*}
	
\end{document}